\newcommand{\Y}[1]{{\tiny\yng(#1)}}
\newcommand{\s}{\sigma}
\newcommand{\y}{\lambda}
\newcommand{\g}{\gamma}
\renewcommand{\a}{\alpha}
\renewcommand{\b}{\beta}
\newcommand{\Ind}{\text{Ind}}
\newcommand{\Res}{\text{Res}}
\newcommand{\Hom}{\text{Hom}}
\newcommand{\End}{\text{End}}
\renewcommand{\dim}{\text{dim}}
\newcommand{\codim}{\text{codim}}
\newcommand{\FIW}{\text{FI}_{\mathcal{W}}}
\newcommand{\FI}{\text{FI}}
\newcommand{\cA}{\mathcal{A}}
\newcommand{\cM}{\mathcal{M}}
\newcommand{\cY}{\mathcal{Y}}
\newcommand{\bn}{{\bf n}}
\newcommand{\bm}{ {\bf m}}
\newcommand{\W}{\mathcal{W}}
\newcommand{\tY}{\mathbf{Y}}
\newcommand{\tH}{\mathbf{H}}
\newcommand{\oo}{\overline}
\newcommand{\C}{\mathbb{C}}
\newcommand{\Q}{\mathbb{Q}}
\newcommand{\R}{\mathbb{R}}
\newcommand{\Z}{\mathbb{Z}}
\newcommand{\F}{\mathbb{F}}
\renewcommand{\k}{\mathbbm{k}}
\newcommand{\Fr}{\text{Fr}}
\newtheorem{thm}{Theorem}[section]
\newtheorem{prop}[thm]{Proposition}
\newtheorem{lem}[thm]{Lemma}
\newtheorem{cor}[thm]{Corollary}
\theoremstyle{definition}
\newtheorem{defn}[thm]{Definition}
\newtheorem{rem}[thm]{Remark}
\newtheorem{example}[thm]{Example}
\newtheorem{problem}[thm]{Problem}
\newtheorem{notn}[thm]{Notation}
\theoremstyle{plain}
\date{\today}\date{\today}
\title{\bf Stability for hyperplane complements of type B/C and statistics on squarefree polynomials over finite fields}
\author{Rita Jim\'enez Rolland\footnote{The first author is grateful for the financial support from  PAPIIT-UNAM grant IA100816.}\  \  and Jennifer C. H. Wilson}
\begin{document}

\maketitle

\begin{abstract} 
In this paper we explore a relationship between the topology of the complex hyperplane complements $\cM_{BC_n}(\C)$ in type B/C and the combinatorics of certain spaces of degree--$n$ polynomials over a finite field $\F_q$. This relationship  is a consequence of the Grothendieck trace formula and work of Lehrer and Kim. We use it to prove a correspondence between a representation-theoretic convergence result on the cohomology algebras $H^*(\cM_{BC_n}(\C); \C)$, and an asymptotic stability result for certain \emph{polynomial} statistics on monic squarefree polynomials over $\F_q$ with nonzero constant term. This result is the type B/C analogue of a theorem due to Church, Ellenberg, and Farb in type A, and we include a new proof of their theorem. To establish these convergence results, we realize the sequences of cohomology algebras of the hyperplane complements as \emph{$\FIW$--algebras finitely generated  in $\FIW$--degree 2}, and we investigate the asymptotic behaviour of general families of algebras with this structure.  We prove a negative result implying that this structure alone is not sufficient to prove the necessary convergence conditions. Our proof of convergence for the cohomology algebras involves the combinatorics of their relators. 
\end{abstract}

\setcounter{tocdepth}{2}
\tableofcontents

\section{Introduction} 
\subsection*{Hyperplane complements and statistics on $\F_q[T]$}

Define complex hyperplane complements 
\begin{align*}
 &\cM_{{A}_n}(\C) = \{ (x_1, \ldots, x_n) \in \C^n \; | \; x_i \neq x_j  \text{ for all } i \neq j \} \\
 &\cM_{{BC}_n}(\C)= \{ (x_1, \ldots, x_n) \in \C^n \; | \; x_i \neq 0 \text{ for all } i, \text{ and }  x_i \neq \pm x_j  \text{ for all } i \neq j \}. 
\end{align*}

The Grothendieck--Lefschetz trace formula in $\ell$-adic cohomology and results of Lehrer \cite{LEHRER_LADIC} and Kim \cite{KIM} imply an amazing relationship between the complex cohomology of these hyperplane complements and point counts for certain spaces of polynomials over a finite field $\F_q$.  Church, Ellenberg, and Farb \cite{CEFPointCounting} describe this relationship for the hyperplane complement $\cM_{{A}_n}(\C)$, and use it to relate stability results for the cohomology groups $H^d(\cM_{{A}_n}(\C); \C)$ as symmetric group representations to stability results for \emph{polynomial} statistics on the set of monic squarefree degree--$n$ polynomials in $\F_q[T]$, in the limit as $n$ tends to infinity \cite[Theorem 3.7 and Theorem 1]{CEFPointCounting}. 

One goal of this paper is to prove the analogues of Church, Ellenberg, and Farb's results in type B/C. We investigate the relationship between the complex cohomology of the hyperplane complement $\cM_{{BC}_n}(\C)$ and statistics on the set  $\cY_n(\F_q)$  of monic squarefree degree--$n$ polynomials in $\F_q[T]$ with nonzero constant term. The space $\cM_{{BC}_n}(\C)$ has an action of the hyperoctahedral group $B_n$ (Definition \ref{DefnBn}), and the following theorem --  a twisted version of the Grothendieck--Lefschetz trace formula, specialized to the scheme $\cY_n$ -- relates the structure of the cohomology groups $H^d(\cM_{{BC}_n}(\C);\C)$ as $B_n$--representations to point-counts on $\cY_n(\F_q)$.\\[-.5em]

\noindent {\bf Theorem \ref{COUNTING}  (A point-counting formula for $\mathcal{Y}_n(\mathbb{F}_q)$). } {\it Let $q$ be an integral power of a prime number $p>2$ and let $\chi$ be a class function on $B_n$. Then for each $n\geq 1$ we have
	\begin{align*}
	&& \sum_{f\in \mathcal{Y}_n(\mathbb{F}_q)} \chi(f)=\sum_{d=0}^n  (-1)^{d}q^{n-d} \big\langle\chi, H^{d}({\cM_{BC_n}} (\mathbb{C});\mathbb{C}\big)\big\rangle_{B_n}. && \text{ \qquad \qquad \qquad \qquad \qquad \qquad (\ref{COUNT})}
	\end{align*} } 
\noindent Section \ref{SectionPoint-CountingFormula} describes how to interpret a $B_n$ class function $\chi$ as a function on polynomials in $\mathcal{Y}_n(\mathbb{F}_q)$, by considering the action of the Frobenius morphism on an associated set  $\mathcal{Y}_n(\overline{\F}_q)$. 


In previous work the second author showed that there is a sense in which the $B_n$--representations $H^d(\cM_{{BC}_n}(\C);\C)$ stabilize as $n$ grows \cite[Theorem 5.8]{FIW2}, using a description of these cohomology groups due to Orlik and Solomon \cite{OrlikSolomon}. In Theorem \ref{ASYMCOUNTING} we use this \emph{representation stability} result for the complex cohomology groups $H^d(\cM_{{BC}_n}(\C);\C)$ and a combinatorial result, Theorem \ref{TheoremStabilityHyperplanes},  to prove asymptotic stability for certain \emph{polynomial} statistics on the polynomials $\cY_n(\F_q)$. \\[-.5em]

\noindent {\bf Theorem \ref{ASYMCOUNTING}   (Stability for polynomial statistics on $\mathcal{Y}_n(\mathbb{F}_q)$). } {\it  Let $q$ be an integral power of an odd prime. For any hyperoctahedral character polynomial  $P \in \Q[X_1, Y_1, X_2, Y_2, \ldots]$ the normalized statistic $q^{-n}\sum_{f\in \mathcal{Y}_n(\mathbb{F}_q)} P(f)$ converges to a limit as $n\rightarrow \infty$. In fact
		\begin{align*}
	&& \lim_{n\to \infty} q^{-n}\sum_{f\in \mathcal{Y}_n(\mathbb{F}_q)} P(f)=\sum_{d=0}^\infty  \frac{ \lim_{m\to\infty}\big\langle P\vert_{B_m}, H^{d}({\cM_{{BC}_m}(\C)},\C)\big\rangle_{B_m}}{(-q)^{d}}
	&&  \text{ \qquad \qquad \qquad \quad  (\ref{ASYMEQ})} \end{align*}
	and the series in the right hand side converges.\\[-.5em] } 

\noindent The functions $X_r$ and $Y_r$ are  \emph{signed-cycle-counting} class functions on $B_n$ (Definition \ref{DefnHypCharPoly}). Given a polynomial $P$ in these functions, the values $P(f)$ encode data on the irreducible factors of the polynomial $f$ and the square roots of their zeroes in $\overline{\F}_q$, as we summarize on the upcoming page. Precise descriptions of character polynomials are given as $B_n$ class functions in Section \ref{REP} and as statistics on $ \mathcal{Y}_n(\mathbb{F}_q)$ in Section \ref{SectionPoint-CountingFormula}.

\subsection*{Convergence and nonconvergence results: algebras in degree $2$}

To prove Theorem \ref{ASYMCOUNTING} we study the algebraic structure of the cohomology rings of the family $\{ \cM_{{BC}_n}(\C)\}_n$.  The proof is combinatorial, and the approach was motivated by our general interest in understanding what are the combinatorial features of the generators and relations of a sequence of algebras that allow for convergence results of the form of \cite[Theorem 1]{CEFPointCounting} and  our Theorem \ref{ASYMCOUNTING}. The  cohomology rings of both the families $\{\cM_{{A}_n}(\C)\}_n$ and  $\{ \cM_{{BC}_n}(\C)\}_n$ have the structure of  \emph{$\FIW$--algebras finitely generated in $\FIW$-degree $\leq 2$}, and a second goal of this paper is to investigate the asymptotic properties implied by this structure. 

Let $\W_n$ generically denote either the family of symmetric groups $S_n$ or the family of hyperoctahedral groups $B_n$.  In Section \ref{FIW}, we review the definitions of $\FIW$--modules and $\FIW$--algebras, and the algebraic framework they provide for studying sequences of $\W_n$--representations. In earlier work  \cite{JIMWIL1}, the authors prove that any $\FIW$--algebra finitely generated in $\FIW$--degree 0 or 1 satisfies the desired convergence condition; this result is stated in Theorem \ref{ConvergenceCriterion}. In particular, consider the sequence of commutative or graded-commutative algebras $A^*_n = \Q[x_1, x_2, \ldots, x_n]$ with an action of $\W_n$ by permuting the subscripts and possibly negating the variables. Then for any integer $q>1$ and any type $\W$ character polynomial $P$, the following limit converges: 
\begin{align} \label{DefiningConvergence} \qquad  \qquad \qquad \qquad \qquad \qquad \sum_{d=0}^\infty  \frac{ \lim_{m\to\infty}\big\langle P\vert_{\W_m}, A^d_m \big\rangle_{\W_m}}{(-q)^{d}}
\end{align}
This same convergence result holds if we take the sequence of (graded)-commutative algebras in finitely many collections of variables 
$$A^*_n = \Q[y^{(1)}, y^{(2)}, \ldots, y^{(a)}, x^{(1)}_1, x^{(1)}_2, \ldots, x^{(1)}_n, x^{(2)}_1, x^{(2)}_2, \ldots, x^{(2)}_n, \ldots, x^{(c)}_1, x^{(c)}_2, \ldots, x^{(c)}_n]$$ 
with an action of $\W_n$ by permuting the subscripts and possibly negating the variables. 

A natural question, then, is whether convergence is again automatic for for $\FIW$--algebras finitely generated in $\FIW$-degree $2$. Specifically, suppose $A^*_n$ is the sequence of polynomial algebras $$ A^*_n = \Q[\;  x_{i,j} \; | \; i,j = 1, \ldots, n \; ]$$ with an action of $\W_n$ by permuting the indices and possibly negating the variables. We may assume the variables satisfy any one of $x_{i,j} \neq x_{j,i}, x_{i,j} =  x_{j,i}$, or $x_{i,j} =-  x_{j,i}$. Then the question is,  will this sequence of algebras be convergent in the sense of Equation (\ref{DefiningConvergence})? 
 In Theorem \ref{NOLIMIT}, we show that this is not the case; it fails even for the trivial characters $P=1$.  \\[-.5em]
 
\noindent {\bf Theorem \ref{NOLIMIT}. (Nonconvergence for symmetric $\FIW$--algebras generated in $\FIW$--degree 2).} {\it Let $\k$ be a subfield of $\C$, and $q>1$ an integer. 
 Let $V$ be a graded $\FIW$--module over $\k$ supported in positive grades containing a free $\FIW$--module on a representation of $\W_2$.  Let $A^*$ be an $\FIW$--algebra  containing the free symmetric algebra on $V$. Then there exist character polynomials $P$ for which the following series does not converge:  $$ \sum_{d=0}  \frac{ \lim_{n \to \infty} \langle P_n, A^d_n \rangle_{\W_n}}{q^d}. $$ }

Theorem \ref{NOLIMIT} shows that the convergent results of \cite[Theorem 1]{CEFPointCounting} and Theorem \ref{ASYMCOUNTING} do not follow formally from the combinatorics of the generators of these algebras. In Proposition \ref{PropArnoldAlg} we give a new proof of \cite[Theorem 1]{CEFPointCounting}, and we extend this strategy to prove Theorem \ref{ASYMCOUNTING}. These proofs examine how combinatorial aspects of the {\it relations} of these algebras can drive their convergence behaviour. 

\subsection*{Statistics on squarefree polynomials and hyperplane complements of type B/C}

Recall that $\cY_n(\F_q)$  denotes the set of monic squarefree polynomials in $\F_q[T]$ with nonzero constant term. It is natural to ask about the distribution of irreducible degree--$r$ factors of these polynomials. Roots of any degree--$r$ irreducible factor lie in $\mathbb{F}_{q^r}$, and it is a more subtle question to ask about the nature of the square roots of these roots. These data are encoded by the action of the Frobenius morphism on the set  $\cY_n(\overline{\F}_q)$, and allows us to interpret the hyperoctahedral signed-cycle-counting class functions $X_r$ and $Y_r$ (Definition \ref{DefnHypCharPoly}) as the following functions on $\cY_n(\F_q)$. 
\begin{align*}
	&X_r(f)=\text{\# degree--$r$ irreducible factors  of $f$ whose roots are {\it quadratic residues} over $\F_{q^r}$} \\
	&Y_r(f)=\text{\# degree--$r$ irreducible factors  of $f$ whose roots are {\it quadratic nonresidues} over $\F_{q^r}$} \\ 	
	& X_r(f)+Y_r(f)=\text{total \# degree--$r$ irreducible factors  of $f$}. 
\end{align*}
	
Details are given in Sections \ref{MBCSchemes} and \ref{SectionPoint-CountingFormula}. With these definitions, Theorem \ref{ASYMCOUNTING} then states that for $q$ an odd prime power, given any polynomial $P$ in the class functions $X_r$ and $Y_r$, the limits in Formula (\ref{ASYMEQ}) exist and are equal. It implies in particular that the expected value of $P$ on $ \mathcal{Y}_n(\mathbb{F}_q)$ converges. \\[-.5em]
	
\noindent {\bf Corollary \ref{CorExpectedValue}. (Stability for the expected value of  polynomial statistics on $\mathcal{Y}_n(\mathbb{F}_q)$).}  {\it Let $q$ be an integral power of an odd prime. For any polynomial  $P \in \Q[X_1, Y_1, X_2, Y_2, \ldots]$ the expected value of $P$ on $\mathcal{Y}_n(\mathbb{F}_q)$ converges as $n$ tends to infinity, and its limit is
\begin{align*}
	&&\lim_{n\to \infty} \frac{\sum_{f\in \mathcal{Y}_n(\mathbb{F}_q)} P(f)}{|\cY(F_q)|} = \left(\frac{q+1}{q-1}\right) \sum_{d=0}^\infty  \frac{ \lim_{m\to\infty}\big\langle P_m, H^{d}({\cM_{{BC}_m}(\C)},\C)\big\rangle_{B_m}}{(-q)^{d}}.
	\end{align*} }


\subsubsection*{Sample computations}

To illustrate these results,  in Section \ref{EXPLICIT} we evaluate Formula (\ref{ASYMEQ}) in some specific examples. We first review a result of Douglass \cite[Formula (1.1)]{DouglassArrangement} on a decomposition of the cohomology groups $H^d({\cM_{BC}}_n(\C); \C)$ as $B_n$--representations. Then, using results of Brieskorn \cite[Th\'eor\`eme 7]{Brieskorn} and Douglass, we compute the stable inner products on the right-hand side of Formula \ref{ASYMEQ} for the character polynomials $P=1$, $P=X_1 - Y_1$ (Lemma \ref{X-Y}) and $P=X_1 + Y_1$ (Lemma \ref{INNER}). We then show that, from these computations, we can deduce the following (well-known) statistics on the polynomials over $\F_q[T]$:

\begin{itemize}

\item {\bf (Proposition \ref{NUMBER}). } The number of degree--$n$ monic squarefree polynomials $f$ in $\mathbb{F}_q[T]$ with $f(0)\neq0$ is 
$$|\mathcal{Y}_n(\mathbb{F}_q)|=q^n-2q^{n-1}+2q^{n-2}-\ldots+ (-1)^{n-1}2q+(-1)^n.$$

\item  {\bf (Proposition \ref{NUMLINEAR}). } The expected number of linear factors in a random degree--$n$ monic squarefree polynomial $f$ in $\mathbb{F}_q[T]$ with $f(0)\neq0$ converges to 
$$  \frac{q-1}{q+1} \qquad \text{ in the limit as $n\longrightarrow\infty$.}$$ 

\item  {\bf (Proposition \ref{NUMQR}). } The expected number of roots that are quadratic residues over $\F_q$ for a random degree--$n$ monic squarefree polynomial $f$ in $\mathbb{F}_q[T]$ with $f(0)\neq0$
converges to 
$$ \frac{q-1}{2(q+1)} \qquad \text{ in the limit   as  $n\longrightarrow\infty$.}$$

\end{itemize}

These three statistics can be computed directly by combinatorial methods on $\F_q[T]$, for example, they are a special case of a computation done by Alegre, Juarez, and Pajela \cite[Theorem 20]{WEIYANSTUDENTS} using the generating function techniques of Fulman  \cite{FULMAN}. We have taken this unconventional approach to computing these statistics in  Section \ref{EXPLICIT} in order to showcase the extraordinary result of Grothendieck, Lehrer, Kim, and others that we can extract combinatorial data on polynomials over finite fields from topological properties of complex hyperplane complements, and vice versa.

\subsection*{ Related work}

\subsubsection*{New work of Matei}
After posting a preprint of this paper, we learned of recent work of Vlad Matei \cite{Matei}. Matei independently proves Theorem  \ref{COUNTING} [Theorem 4, Matei], also using methods inspired by Church, Ellenberg, and Farb's proof of their Theorem 3.7 \cite{CEFPointCounting}.  He combines this result with a description of the cohomology groups $H^d(\cM_{BC_n}(\C), \C)$ due to Henderson \cite{HendersonHyperplanes} to prove a theorem \cite[Theorem 1]{Matei} on the distribution of monic degree--$n$ polynomials of the form 
$$ f(T) = \Big(g(T)\Big)^2 + T \Big(h(T)\Big)^2, \qquad g(T), h(T) \in \F_q[T].$$

\subsubsection*{New work of Casto}
While finishing this paper,  we learned of new work of  Casto \cite{CASTO} establishing a general asymptotic stability result for statistics on the  $i$th roots of the zeroes of polynomials over $\F_q$, and the associated orbit configuration spaces. Casto \cite{CASTO} generalizes techniques of Farb and Wolfson  \cite{FARBWOLFSON} to \emph{FI$_G$--modules}. He proves asymptotic stability by topological methods, showing that orbit configuration spaces have {\it convergent cohomology} \cite[Theorem 1.3 \& Section 3]{CASTO}. His results \cite[Theorem 1.4 and 1.5]{CASTO}  recover Theorems \ref{COUNTING} and \ref{ASYMCOUNTING} as the special case $i=2$.

\subsubsection*{Other related work} 

Convergence of the left hand side of Formula (\ref{ASYMEQ}) in Theorem \ref{ASYMCOUNTING} above could also be proven using the generating function techniques that have been employed in recent work such as Fulman \cite{FULMAN} and Chen \cite{WEIYAN}; see  \cite[Corollay 4 (b)]{WEIYAN}. These methods can also be used to obtain additional results on the stable values, for example, Chen \cite{WEIYAN} shows that the stable Betti numbers of unordered configuration spaces are \emph{quasipolynomial} and satisfy linear recurrence relations. It should be possible to adapt these techniques for the type B/C analogues. 

The use of point-counting over finite fields to obtain topological information about complex reflection group arrangements has appeared in the work of Kisin and Lehrer \cite{KISIN-LEHRER}. See also Chen \cite{WEIYAN} and  Fulman--Jim\'enez Rolland--Wilson \cite{FJRW} for related work on the spaces of maximal tori in Lie groups of types A and B/ C, respectively.

Farb--Wolfson \cite[Theorem B]{FARBWOLFSON} prove {\it \'etale homological} and {\it representation stability} for the cohomology of configuration spaces of $n$ points on smooth varieties. They use the Grothendieck--Leftschetz formula to relate this result to point counts over finite fields, and they prove asymptotic stability for these point counts by establishing subexponential bounds on the growth of the unstable cohomology of those spaces  \cite[Theorems A and C]{FARBWOLFSON}.

Gadish \cite[Theorem A]{GADISH} derives a version of the Grothendieck trace formula for ramified covers that is suited to applications in representation stability. With this formula he performs explicit computations for the Vieta cover of the variety of polynomials, and describes factorization statistics of polynomials over finite fields \cite[Section 3]{GADISH}.

\subsection*{Acknowledgments} We are grateful to Tom Church, Weiyan Chen, Kevin Casto, Sean Howe, and L\'aszl\'o Lov\'asz for helpful conversations. We thank Benson Farb for suggesting this project to us.  

\section{Foundations: $\FIW$-modules and character polynomials}\label{FIW}

Church--Ellenberg--Farb \cite{CEF} introduced the theory of \emph{FI--modules} as an algebraic framework for studying sequences of symmetric group representations. Their results were generalized by the second author to sequences of representations of the classical Weyl groups in type B/C and D \cite{FIW1, FIW2}. In this section we summarize the relevant terminology and results.

\begin{defn} \label{DefnBn}
Let $B_n$ denote the Weyl group in type $B_n/C_n$, which we call the the \emph{hyperoctahedral group} or \emph{signed permutation group}. We define $B_n$ as the subgroup of permutations $S_{\Omega}\cong S_{2n}$ on the set $\Omega = \{ 1, \oo1, 2, \oo2, \ldots, n, \oo{n}\}$ given by $$ B_n = \{ \sigma \in S_{\Omega} \; \; | \; \; \sigma(\oo{a}) = \oo{\sigma(a) }\quad \text{for all $1 \leq a \leq n$} \; \}, \qquad B_0 = \{1\}.$$  There is a natural surjection $B_n \to S_n$ by forgetting signs of the elements of $\Omega$. 
\end{defn}

Throughout the paper we will use $\W_n$ to generically denote either Weyl group family: the symmetric groups $S_n$ in type A or the signed permutation groups $B_n$ in type B/C.

\subsection{The theory of $\FIW$--modules}
 
\begin{defn} {\bf (The category $\FIW$; $\FIW$--modules.)} Let $\W_n$ be one of the families $S_n$ or $B_n$. To each of these families we associate a category $\FIW$, defined as follows. Its objects are finite sets $${\bf 0} = \varnothing \qquad \text{and} \qquad \bn = \{1, \oo1, \ldots, n, \oo{n}\} \qquad \text{for $n \in \Z_{\geq 1}$},$$ where $\oo{a}$ is shorthand for $(-a)$ for any integer $a$. The $\FIW$ morphisms are generated by its endomorphisms $\End(\bn) = \W_n$ and the canonical inclusions $I_n : \bn \rightarrow (\bn + \mathbf{1})$. An \emph{$\FIW$--module} $V$ over a commutative, unital ring $R$ is a functor from $\FIW$ to the category of $R$--modules; its image is sequence of $\W_n$--representations $V_n:=V(\bn)$ with actions of the $\FIW$ morphisms. 

 In type A, the category $\FI_A$ is equivalent to the category $\FI$ studied by Church--Ellenberg--Farb \cite{CEF}. In type $B/C$ we denote the category by $\FI_{BC}$. 	
 \end{defn}

\begin{defn} {\bf (Graded $\FIW$--modules; graded $\FIW$--algebras.)}  A \emph{graded $\FIW$--module $V^*$} over a ring $R$ is a functor from $\FIW$ to the category of graded $R$--modules. A \emph{graded $\FIW$--algebra} over a ring $R$ is a functor from $\FIW$ to the category of graded $R$--algebras.  We will refer to $d$ as the \emph{graded--degree} and $n$ as the \emph{$\FIW$--degree} of the $R$--module $V^d_n$.
\end{defn}
	
\begin{defn} {\bf (Finite generation; degree of generation; finite type.)}
	An $\FIW$--module $V$ is \emph{generated (as a module)} by elements $\{v_i\} \subseteq \coprod_n V_n$ if $V$ is the smallest $\FIW$--submodule of $V$ containing $\{v_i\}$.  We call  $\{v_i\}$ an \emph{(additive) generating set} for $V$. A graded $\FIW$--algebra $V=V^*$ is \emph{generated (as an algebra)} by elements $\{v_i\} \subseteq \coprod_n V_n$ if $V$ is the smallest  $\FIW$--subalgebra containing the elements $\{v_i\}$.  We call  $\{v_i\}$ an \emph{(algebra) generating set} for $V$. An $\FIW$--module $V$ (respectively, a graded $\FIW$--algebra $V=V^*$)is \emph{finitely generated} as a module (respectively, as an algebra) if it has a finite generating set, and $V$ is \emph{generated in degree $\leq m$} if it is generated by $\coprod_{k=0}^m V_k$.  A graded $\FIW$--module or algebra $V^*$ has \emph{finite type} if each graded piece $V^d$ is finitely generated as an $\FIW$--module.
\end{defn}

\begin{defn} {\bf (The representable $\FIW$--modules $M_\W(\bm)$.)}  Following the notation introduced by Church--Ellenberg--Farb \cite{CEF}, we write $$M_\W(\bm):= R\big[\Hom_{\FIW}(\bm, -)\big]$$ for the represented $\FIW$--module over $R$ generated in degree $m$. An orbit-stabilizer argument shows that for each $n$ there is an isomorphism of $\W_n$--representations $$ M_\W(\bm)_n \cong R\big[\W_n/\W_{n-m} \big].$$ We denote represented $\FI$--modules by  $M(\bm)$ and represented $\FI_{BC}$--modules by $M_{BC}(\bm)$.
	 \end{defn}
	 	 
	 An $\FIW$--module over $R$ is finitely generated in degree $\leq k$ if and only if it is a quotient of a finite direct sum of represented functors $M_\W(\bm)$ with $m \leq k$ \cite[Proposition 2.3.5]{CEF}; \cite[Proposition 3.15]{FIW1}. 
		 
\begin{defn} {\bf (The free $\FIW$-modules $M_{\W}(U)$.)} Fix a nonnegative integer $m$ and a $\W_m$--representation $U$. Define the $\FIW$--module  $$M_\W (U) := M_\W(\bm) \otimes_{R[\W_m]} U$$ using the right action of $\W_m$ on $M_{\W}(\bm)=   R\big[\Hom_{\FIW}(\bm, \bn)].$ We call $M_{\W}(U)$ the \emph{free $\FIW$--module on $U$}.  As $\W_n$--representations,  $$M_{\W}(U)_n \cong \Ind_{\W_m \times \W_{n-m}}^{\W_n} U \boxtimes R, \qquad \text{where $R$ carries a trivial $\W_{n-m}$--action.} $$ Over fields of characteristic zero the decomposition of $M_{\W}(U)_n$  into irreducible representations is described by the branching rules. If $U$ is an irreducible $S_d$--representation associated to a partition $\y$ of $d$, then we may write $M(\y)$ in place of $M(U)$. Similarly, for $W$ the irreducible $B_d$--representation associated to double partition $(\y, \mu)$ we write $M_{BC}(\y, \mu)$ for $M_{BC}(W)$.\medskip
\end{defn}

\subsection{The representation theory of $\W_n$}\label{REP}

	Just as the irreducible complex representations of $S_n$ are in natural bijection with \emph{partitions} of $n$, the irreducible complex representations of  $B_n$ are in  bijection with \emph{double partitions} of $n$, that is, ordered pairs of partitions $(\lambda, \mu)$ such that $|\lambda|+|\mu|=n$. We denote the corresponding representation by $V_{(\lambda,\mu)}$. See (for example) Geck--Pfeiffer \cite{GeckPfeiffer} for a detailed development of the representation theory of these Weyl groups, or (for example) Wilson \cite[Section 2.1]{FIW1} for a summary suited to present purposes. Each complex irreducible representation of these groups is defined over the rational numbers, or any subfield of $\C$.  For the remainder of this section we will take coefficients to be in a subfield $\k$ of $\C$.

\begin{defn} {\bf (Character polynomials for $S_n$).} For each $r \in \Z_{\geq 1}$, MacDonald \cite[I.7 Example 14]{MacdonaldSymmetric} defined a  class function $X_r: \coprod_n S_n \to \Z$ that take a permutation $\sigma$ to the number $X_r(\sigma)$ of $r$--cycles in its cycle type.  A \emph{character polynomial for $S_n$} with coefficients in a ring $\k$ is a polynomial in these class functions $P \in \k[X_1, X_2, \ldots]$. The \emph{degree} of a character polynomial is defined by assigning $\deg(X_r) = r$ for $r\geq 1$. A character polynomial $P$ defines a class function on $S_n$ for all $n \geq 0$; we denote its restriction to $S_n$ by $P_n$. 
\end{defn}
\begin{defn} {\bf (Signed cycle type of a signed permutation).} \label{DefnSignedCycleType} The conjugacy classes of $B_n$ are classified by \emph{signed cycle type} as follows. For $r \in \Z_{\geq 1}$, a signed permutation in $B_n$ is called an \emph{$r$--cycle} if it projects to an $r$--cycle in $S_n$.  A \emph{positive $r$-cycle} is an $r$--cycle that negates an even number of letters in $\Omega = \{ 1, \oo1, 2, \oo2, \ldots, n, \oo{n}\}$; these are elements of the form $$(a_1 a_2 \cdots  a_r)(\oo{a_1} \, \oo{a_2} \cdots \oo{a_r}) \in S_{\Omega}, \qquad a_i \in \Omega$$ when expressed in cycle notation as a permutation on  ${\Omega}.$  A positive $r$--cycle has order $r$. A \emph{negative $r$--cycle} is a signed permutation that negates an odd number of letters; these have the form $$(a_1 a_2 \cdots a_r \oo{a_1} \cdots \oo{a_r})  \in S_{\Omega}, \qquad a_i \in \Omega.$$ A negative $r$--cycle has order $2r$, and its  $r^{th}$ power is a product of transpositions $$(a_1 \; \oo{a_1}) (a_2 \; \oo{a_2}) \cdots (a_r \; \oo{a_r}).$$
\end{defn}
Young  \cite{YoungHyperoctahedral} proved that signed permutations factor uniquely as a product of positive and negative cycles, and two signed permutations are conjugate if and only if they have the same signed cycle type. 
\begin{defn} {\bf (Character polynomials for $B_n$).}  \label{DefnHypCharPoly}
Given  $\sigma \in B_n$, let $X_r(\sigma)$ be the number of positive $r$--cycles in its signed cycle type, and $Y_r(\sigma)$ the number of negative $r$--cycles. A \emph{hyperoctahedral character polynomial} with coefficients in a ring $\k$ is a polynomial $P \in \k[X_1, Y_1, X_2, Y_2, \ldots]$. Each character polynomial defines a class function on $\coprod_n B_n$;  we write $P_n$ to mean its restriction to $B_n$. We define the \emph{degree} of a character  polynomial by setting $\deg(X_r)=\deg(Y_r)=r$.
\end{defn}

\begin{notn}{\bf (The inner product $\langle - , - \rangle_{G}$).} For a finite group $G$, we write $\langle - , - \rangle_{G}$ for the standard inner product on the $\C$-valued class functions on $G$. By abuse of notation we may write either class functions or $G$--representations in its argument; a representations should be taken to represent the corresponding character. 
\end{notn}

\subsection{Finitely generated $\FIW$--modules are representation stable}
A central result of the work of Church--Ellenberg--Farb \cite{CEF} and Wilson \cite{FIW1, FIW2} are constraints on the structure of finitely generated $\FIW$--modules.  Specifically, a finitely generated $\FIW$--module over characteristic zero is \emph{uniformly representation stable} in the sense of Church--Farb \cite[Definition 2.3]{CF}, and its characters are \emph{eventually polynomial}, in the sense of the following theorem. 

\begin{thm} \label{REPSTABILITY}{\bf (Constraints on finitely generated $\FIW$--modules).} Let $V$ be an $\FIW$--module over a subfield $\k$ of $\C$ which is finitely generated in degree $\leq m$. 
	\begin{itemize}
\item {\bf (Uniform representation stability)} \cite[Prop. 3.3.3]{CEF}; \cite[Theorem 4.27]{FIW1}  \\ The sequence $V_n$ is uniformly representation stable with respect to the maps induced by the $\FIW$--morphisms the natural inclusions $I_n: \bn \to ({\bf n+1})$, stabilizing once $n$ is at least $m +\max($generation degree of $V$, relation degree of $V$). 
\item {\bf (Character polynomials)} \cite[Theorem 3.3.4]{CEF}; \cite[Theorem 4.6]{FIW2} \\ Let $\chi_n$ denote the character of the $\W_n$--representation $V_n$. Then there exists a unique character	polynomial $F_V$  of degree at most $m$ such that $F_V (\s) = \chi_n(\s)$ for all $\s \in \W_n$ and $n >>0.$
\end{itemize}
\end{thm}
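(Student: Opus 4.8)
The statement is a citation of known results (\cite{CEF}, \cite{FIW1}, \cite{FIW2}), so the "proof" is really a matter of reducing the $\FIW$ statement to the already-established structure theory of finitely generated $\FIW$--modules. The plan is to treat the two bullets in turn, in each case reducing from an arbitrary finitely generated $V$ to the representable modules $M_\W(\bm)$.

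\medskip
\noindent\textbf{Step 1: reduce to representable modules.} First I would invoke the fact recalled just after the definition of $M_\W(\bm)$: an $\FIW$--module $V$ finitely generated in degree $\leq m$ is a quotient of a finite direct sum $\bigoplus_i M_\W(\mathbf{m_i})$ with each $m_i \leq m$. For the character-polynomial bullet this is almost enough on its own, since $M_\W(\bm)_n \cong \k[\W_n/\W_{n-m}]$, and the character of $\k[\W_n/\W_{n-m}]$ is, by a direct orbit count, a character polynomial in the $X_r$ (and $Y_r$ in type B/C) of degree $m$: evaluating the permutation character of $\W_n$ acting on $m$-element "partial injections" at an element $\sigma$ counts the number of such configurations fixed by $\sigma$, which is manifestly a polynomial expression in the cycle-counting functions $X_r$ (resp. $X_r, Y_r$) of total degree $m$, independent of $n$ once $n \geq$ some bound. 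Taking the character of $V_n$, which for large $n$ is an alternating sum over a finite resolution — or just a difference of the characters of a surjection's source and kernel — the class function $\chi_n$ agrees with a fixed character polynomial $F_V$ of degree $\leq m$; uniqueness of $F_V$ follows because two character polynomials agreeing on all $\W_n$ for $n \gg 0$ are equal (the monomials in $X_r, Y_r$ are linearly independent as functions on $\coprod_n \W_n$).

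\medskip
\noindent\textbf{Step 2: uniform representation stability.} For the first bullet I would again pass to a presentation of $V$ by representables. The key inputs are: (a) each $M_\W(\bm)$ is uniformly representation stable — this is the branching-rule computation, since $M_\W(\bm)_n \cong \k[\W_n/\W_{n-m}]$ decomposes via Pieri/branching into irreducibles $V_{\y(n)}$ (resp. $V_{(\y,\mu)(n)}$) whose multiplicities stabilize and whose "padded" shapes are eventually constant, with the stable range controlled by $m$; and (b) uniform representation stability is closed under the relevant operations: quotients, and extensions by a bounded "relation" module, in each case with an explicit bookkeeping of how the stable range degrades. Church--Ellenberg--Farb and Wilson carry this out; I would cite \cite[Prop.~3.3.3]{CEF} and \cite[Theorem~4.27]{FIW1} for the packaging, noting that the stabilization range $n \geq m + \max(\text{gen.\ deg}, \text{rel.\ deg})$ is exactly what the induction on a two-step presentation $\bigoplus M_\W(\mathbf{m_i'}) \to \bigoplus M_\W(\mathbf{m_i}) \to V \to 0$ produces: generation degree bounds the $m_i$, relation degree bounds the $m_i'$, and the $+m$ shift comes from the branching range for each representable summand.

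\medskip
\noindent\textbf{Main obstacle.} The genuinely load-bearing part is establishing representation stability and the character-polynomial property for the representable modules $M_\W(\bm)$ themselves, together with the closure properties under quotients and bounded extensions — equivalently, controlling how the stable range propagates through a finite presentation. In type B/C this requires the branching rules for $B_n \downarrow B_{n-1}$ in terms of double partitions and a verification that the hyperoctahedral cycle-counting functions $X_r, Y_r$ span the relevant space of "eventually stable" class functions; these are precisely the contributions of \cite{FIW1, FIW2} extending \cite{CEF}. Since all of this is available in the cited literature, the proof in the paper should consist of the two reductions above with pointers to those references.
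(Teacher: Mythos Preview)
The paper gives no proof of this theorem: it is stated purely as a summary of results from \cite{CEF}, \cite{FIW1}, and \cite{FIW2}, with the only additional content being the subsequent remark that finitely generated $\FIW$--modules are Noetherian \cite[Theorem 1.3]{CEF}, \cite[Theorem 4.21]{FIW1}, so that the relation degree appearing in the stable range is automatically finite. Your proposal correctly identifies this as a citation result and then goes further, sketching the actual argument from the cited references (reduction to representables $M_\W(\bm)$, branching rules, closure of representation stability under quotients and extensions); this is accurate and more detailed than what the paper itself provides.
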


\noindent Relation degree is defined in \cite[Definition 3.18]{FIW1}. Church--Ellenberg--Farb and Wilson proved that finitely generated $\FIW$--modules are Noetherian  \cite[Theorem 1.3]{CEF}; \cite[Theorem 4.21]{FIW1}, and so a finitely generated $\FIW$--module $V$ necessarily has finite relation degree.

\subsection{Existing asymptotic results}
 Below is a summary of results on asymptotics of character polynomials, which we will use to prove the results in Section \ref{ASYMSection}.

 \begin{defn}{\bf (Asymptotic equivalence; asymptotic bounds; Big and little O notation).} 
 Let $f, g \colon \Z_{\geq 0} \to  \R$ be functions. We say that $f$ is \emph{asymptotically equivalent} to $g$ and write $f \sim g$  if $$\lim_{d \to \infty} \frac{f(d)}{g(d)}=1.$$  
We say  $f$ is \emph{asymptotically dominated} by $g$ and write $f \lesssim g$ if  $$ \limsup_{d \to \infty}\frac{f(d)}{g(d)} \leq 1.$$ 
The function $f$ \emph{is order $O(g)$} or \emph{asymptotically bounded above} by $g$ if $$\limsup_{d \to \infty} \left\vert\frac{f(d)}{g(d)}\right\vert < \infty,$$ equivalently, if $|f(d)| \leq C|g(d)|$ for some constant $C$ and all $d$ sufficiently large. 
The function $f$ \emph{is order $o(g)$} if $$\lim_{d \to \infty} \frac{f(d)}{g(d)}=0.$$ 
\end{defn}

\begin{prop}{\bf (The inner product of character polynomials stabilizes).} \cite[Proposition 3.9]{CEFPointCounting}; \cite[Proposition 3.1]{JIMWIL1}   \label{STABPOL} Let $\k$ be a subfield of $\C$, and let $\W_n$ represent one of the families $S_n$ or $B_n$. Let $P, Q $ be two  character polynomials for $\W_n$. Then the inner product $\langle Q_n, P_n \rangle_{\W_n}$  is independent of $n$ for $n \geq \deg(P) + \deg(Q)$. 
\end{prop}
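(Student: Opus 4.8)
The plan is to rewrite the inner product as an average of a single character polynomial over $\W_n$, and then to evaluate such averages on a basis of the ring of character polynomials adapted to the grading. Since the class functions $X_r$ (and $Y_r$ in type B/C) are integer-valued, for any character polynomial $P$ we have $\overline{P_n(\sigma)} = \overline{P}_n(\sigma)$, where $\overline{P}$ is obtained by conjugating the coefficients of $P$; in particular $\deg\overline{P}=\deg P$. Setting $R:=Q\,\overline{P}$, a character polynomial of degree at most $\deg P+\deg Q$, the definition of $\langle-,-\rangle_{\W_n}$ gives $\langle Q_n,P_n\rangle_{\W_n} = \tfrac{1}{|\W_n|}\sum_{\sigma\in\W_n}R_n(\sigma)$. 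So it suffices to prove: for every character polynomial $R$, the average $\tfrac{1}{|\W_n|}\sum_{\sigma\in\W_n}R_n(\sigma)$ is independent of $n$ once $n\geq\deg R$.

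Next I would pass to the basis of falling-factorial monomials. Write $(z)_a:=z(z-1)\cdots(z-a+1)$. In type A the monomials $\prod_{r\geq1}(X_r)_{a_r}$ (with finitely many $a_r$ nonzero) form a $\k$-basis of the ring of character polynomials, since the change of basis to the ordinary monomials $\prod_r X_r^{a_r}$ is unitriangular; moreover $\prod_r(X_r)_{a_r}$ has degree $m:=\sum_r r\,a_r$. In type B/C the analogous basis is $\prod_{r\geq1}(X_r)_{b_r}(Y_r)_{c_r}$, of degree $m:=\sum_r r(b_r+c_r)$. Since a character polynomial $R$ of degree $d$ is a finite $\k$-linear combination of basis monomials of degree $\leq d$, it is enough to show that the average of each such monomial over $\W_n$ is a constant, independent of $n$, for $n\geq m$.

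The heart of the argument is a double count, exploiting that any two distinct cycles in the cycle decomposition of a single (signed) permutation are disjoint. Thus $\big(X_r(\sigma)\big)_{a_r}$ equals the number of ordered $a_r$-tuples of distinct $r$-cycles of $\sigma$, and a product of such falling factorials counts ordered \emph{configurations} of pairwise disjoint cycles of $\sigma$ of prescribed lengths (and signs, in type B/C). Summing over $\sigma\in\W_n$ and interchanging the order of summation, one sums over all such configurations $C$ — each occupying $m$ of the $n$ available letters — the number of $\sigma\in\W_n$ containing $C$, which is $|\W_{n-m}|$ since $\sigma$ is unconstrained on the remaining letters. The number of configurations itself telescopes; the one genuinely type-B/C input is that the number of positive (respectively negative) $r$-cycles supported on a fixed $r$-element set of letters is $2^{r-1}(r-1)!$, once one accounts for the cyclic and sign symmetries of their standard cycle presentations. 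Carrying this out gives
\begin{align*}
\sum_{\sigma\in S_n}\ \prod_{r\geq1}\big(X_r(\sigma)\big)_{a_r} &= \frac{n!}{\prod_{r\geq1}r^{a_r}}, &
\sum_{\sigma\in B_n}\ \prod_{r\geq1}\big(X_r(\sigma)\big)_{b_r}\big(Y_r(\sigma)\big)_{c_r} &= \frac{2^n\,n!}{\prod_{r\geq1}(2r)^{b_r+c_r}},
\end{align*}
valid precisely when $n\geq m$ (for $n<m$ the monomial vanishes identically on $\W_n$). Dividing by $|\W_n|=n!$, respectively $2^n n!$, gives the averages $\prod_r r^{-a_r}$ and $\prod_r(2r)^{-(b_r+c_r)}$, independent of $n$ for $n\geq m$. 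Applying this to each monomial in the expansion of $R=Q\,\overline{P}$ shows $\langle Q_n,P_n\rangle_{\W_n}$ is independent of $n$ once $n\geq\deg P+\deg Q$.

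I expect the main obstacle to be the bookkeeping in the type B/C double count: enumerating positive versus negative $r$-cycles on a fixed letter set correctly, and then tracking the powers of $2$ — those from the $2^{r-1}$ above and from $|\W_{n-m}|=2^{n-m}(n-m)!$ — so that they collapse to the clean factor $(2r)^{-(b_r+c_r)}$. The type A statement is the same argument with every factor of $2$ deleted.
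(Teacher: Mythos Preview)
Your argument is correct. The reduction to a single character polynomial $R=Q\overline{P}$, the passage to the falling-factorial basis, and the double count are all sound; the type B/C cycle count $2^{r-1}(r-1)!$ and the final formula $\prod_r(2r)^{-(b_r+c_r)}$ are right.

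Note, however, that the paper does not give its own proof of this proposition: it merely cites \cite[Proposition~3.9]{CEFPointCounting} and \cite[Proposition~3.1]{JIMWIL1}. Your argument is essentially the one carried out in those references (and is the standard proof of this fact), so there is nothing to compare beyond saying that you have reconstructed the cited proof.
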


\begin{lem} \label{FGCHARALGSTAB} {\bf (Stability for characters of finitely generated $\FIW$--algebras).} \cite[Lemma 3.3]{JIMWIL1} \\ Let $\k$ be a subfield of $\C$, and let $\W_n$ represent one of the families $S_n$ or $B_n$. Suppose that $A^*$ is an associative $\FIW$--algebra over $\k$ that is generated as an $\FIW$--algebra by finitely many elements of positive graded-degree. Then for each $d$ and any $\W_n$ character polynomial $P$, the following limit exists: $$  \lim_{n \to \infty} \langle P_n, A^d_n \rangle_{\W_n}.$$

\end{lem}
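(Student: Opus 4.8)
The plan is to deduce the lemma from two results already recorded above: that a finitely generated $\FIW$--module over $\k$ has an eventually polynomial character (the ``character polynomials'' clause of Theorem~\ref{REPSTABILITY}), and that the inner product of two character polynomials is independent of $n$ once $n$ is large (Proposition~\ref{STABPOL}). Granting these, the entire content of the lemma is the claim that, for each fixed graded--degree $d$, the sequence $n\mapsto A^d_n$ is a finitely generated $\FIW$--module over $\k$.

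To prove that claim I would exploit the hypothesis that $A^*$ is generated as an $\FIW$--algebra by finitely many homogeneous elements $g_1,\dots,g_k$ of \emph{positive} graded--degree; say $g_i\in A^{e_i}_{m_i}$ with $e_i\ge 1$. Then $A^d_n$ is spanned over $\k$ by the products $\phi_1(g_{i_1})\cdots\phi_t(g_{i_t})$, where each $\phi_j$ is an $\FIW$--morphism into $\bn$ applied to the appropriate generator and $e_{i_1}+\cdots+e_{i_t}=d$. Because every $e_i\ge 1$, such a product has at most $t\le d$ factors; hence the union of the images of the $\phi_j$ is a sub--object of $\bn$ of size bounded purely in terms of $d$ and $\max_i m_i$, and the product factors through an $\FIW$--morphism out of that sub--object. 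Two consequences follow immediately: for fixed $n$ only finitely many such products exist, so $\dim_\k A^d_n<\infty$; and $A^d$ is generated as an $\FIW$--module in $\FIW$--degree at most $d\cdot\max_i m_i$. Hence $A^d$ is a finitely generated $\FIW$--module. (Equivalently: the algebra--generator maps together with the multiplication of $A^*$ present $A^d$ as a quotient of a finite direct sum of pointwise tensor products of at most $d$ representable $\FIW$--modules, and one may then appeal to the closure of finite generation under pointwise tensor products, as established in \cite{CEF} and \cite{FIW1}.)

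The rest is formal. Fix $d$ and apply the ``character polynomials'' part of Theorem~\ref{REPSTABILITY} to the finitely generated $\FIW$--module $A^d$: there is a character polynomial $F=F_{A^d}$, of degree at most the $\FIW$--generation degree of $A^d$, and an integer $N_0$, such that the character of $A^d_n$ agrees with $F_n$ for all $n\ge N_0$. Then for $n\ge N_0$ we have $\langle P_n,A^d_n\rangle_{\W_n}=\langle P_n,F_n\rangle_{\W_n}$, and Proposition~\ref{STABPOL} shows that the right--hand side is independent of $n$ once $n\ge\deg(P)+\deg(F)$. Therefore the sequence $\langle P_n,A^d_n\rangle_{\W_n}$ is eventually constant, so the limit $\lim_{n\to\infty}\langle P_n,A^d_n\rangle_{\W_n}$ exists; in fact it is attained for every $n\ge\max\{N_0,\,\deg(P)+\deg(F)\}$.

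The only step demanding genuine care is the finite--generation claim, and within it the key point is that positivity of the graded--degrees of the generators is exactly what bounds both the number of factors in a monomial of $A^d$ and the $\FIW$--degree in which $A^d$ is generated. Once $A^d$ is known to be a finitely generated $\FIW$--module, nothing else is at stake: the conclusion drops out of Theorem~\ref{REPSTABILITY} and Proposition~\ref{STABPOL}. I do not anticipate any real obstacle.
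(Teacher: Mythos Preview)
Your proposal is correct. Note, however, that the present paper does not actually prove this lemma: it is quoted verbatim from the authors' earlier work \cite[Lemma 3.3]{JIMWIL1}, so there is no in-paper proof to compare against. That said, your argument is exactly the natural one and is almost certainly the argument given in \cite{JIMWIL1}: the crucial observation is that positivity of the graded--degrees of the algebra generators forces each graded piece $A^d$ to be a finitely generated $\FIW$--module (with generation degree bounded by $d\cdot\max_i m_i$), after which Theorem~\ref{REPSTABILITY} and Proposition~\ref{STABPOL} finish the job. Your justification of the finite--generation step---bounding the number of factors in a monomial by $d$ and observing that any such monomial lies in the image of an $\FIW$--morphism from an object of bounded size---is clean and complete.
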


\begin{lem} {\bf (Bounding coinvariants).} \cite[Definition 3.12]{CEFPointCounting}; \cite[Lemma 3.4]{JIMWIL1} \label{EQUIV} Let $A^d$ be the $d^{th}$ graded piece of a graded $\FIW$--module over a subfield $\k$ of $\C$. For a function $g: \Z_{\leq 0} \to \R$, the following are equivalent: 
	\begin{enumerate}
		\item[I.] For each $a \geq 0$ there is a function $F_a(d)$ that independent of $n$ and order $O(g)$ such that $$ \dim_{\k} \Big( (A^d_n)^{\W_{n-a}} \Big) \leq F_a(d) \qquad \text{for all $d$ and $n$}. $$
		\item[II.] For each  $\W_n$ character polynomial $P$ there is a function $F_P(d)$ that is independent of $n$ and order $O(g)$ such that $$ | \langle P_n,  A^d_n \rangle_{B_n} | \leq F_P(d) \qquad \text{for all $d$ and $n$}. $$
	\end{enumerate}	
\end{lem}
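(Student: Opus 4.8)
The plan is to pass between the invariant dimensions of condition I and the inner products of condition II using Frobenius reciprocity, together with an explicit description of certain permutation characters of $\W_n$ as character polynomials. The workhorse identity is
$$\dim_\k (A^d_n)^H = \big\langle \Ind_H^{\W_n}\mathbf{1},\ A^d_n\big\rangle_{\W_n}, \qquad H \leq \W_n,$$
from which, for any $\W_n$-set $\Omega$ with orbit decomposition $\Omega \cong \coprod_i \W_n/H_i$, one obtains $\langle \chi_\Omega, A^d_n\rangle_{\W_n} = \sum_i \dim_\k (A^d_n)^{H_i}$. If moreover each $H_i$ contains a conjugate of $\W_{n-e}$, then each summand is at most $\dim_\k (A^d_n)^{\W_{n-e}}$, since $\dim_\k V^{H} \leq \dim_\k V^{H'}$ whenever $H \supseteq H'$ and conjugate subgroups cut out invariant subspaces of the same dimension. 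I will use this in both directions.

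For condition II $\Rightarrow$ condition I, recall the paper's identification $M_\W(\ba)_n \cong \k[\W_n/\W_{n-a}]$. The character of $\k[\W_n/\W_{n-a}]$ at $\sigma$ counts the fixed elements of $\W_n$ acting on ordered $a$-tuples of distinct letters, which is a polynomial of degree $a$ in the number of fixed letters of $\sigma$ --- explicitly the falling factorial $X_1(X_1-1)\cdots(X_1-a+1)$ in type A, and $2^a$ times this in type B/C. Call this character polynomial $Q^{(a)}$. Then, for $n \geq a$, $\dim_\k(A^d_n)^{\W_{n-a}} = \langle Q^{(a)}_n, A^d_n\rangle_{\W_n}$, and condition II applied to $Q^{(a)}$ bounds the right-hand side by a function of order $O(g)$; this is condition I with $F_a := F_{Q^{(a)}}$.

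For condition I $\Rightarrow$ condition II, fix a character polynomial $P$ of degree $\leq D$ and expand it in the standard monomial basis: a $\Q$-linear combination of products $\prod_r \binom{X_r}{a_r}$ in type A, or $\prod_r \binom{X_r}{a_r}\binom{Y_r}{b_r}$ in type B/C, ranging over tuples with $\sum_r r a_r \leq D$, respectively $\sum_r r(a_r + b_r) \leq D$. The key point is that each such monomial, restricted to $\W_n$, is the permutation character of the $\W_n$-set of configurations that select, for each $r$, a set of $a_r$ distinct ``$r$-cycle substructures'' of $\bn$ --- and in type B/C additionally $b_r$ distinct negative-$r$-cycle substructures, using Young's classification of signed cycle types --- because a (signed) permutation $\sigma$ fixes such a configuration exactly when the selected substructures are genuine cycles of $\sigma$, so the number of fixed configurations is the value of the monomial at $\sigma$. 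Every configuration involves at most $D$ of the $n$ coordinates, so every point-stabilizer contains a conjugate of $\W_{n-D}$; and the number of $\W_n$-orbits is bounded by a constant $N_\alpha$ independent of $n$, being the number of combinatorial overlap-patterns on at most $D$ coordinates. By the first paragraph, the inner product of each monomial (restricted to $\W_n$) with $A^d_n$ is at most $N_\alpha\dim_\k(A^d_n)^{\W_{n-D}}$, and hence $|\langle P_n, A^d_n\rangle_{\W_n}| \leq C_P\dim_\k(A^d_n)^{\W_{n-D}} \leq C_P F_D(d)$, which is $O(g)$ by condition I with $a = D$.

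The step I expect to be the main obstacle is the combinatorial bookkeeping in the last paragraph: verifying that $\prod_r \binom{X_r}{a_r}$ (and its type B/C analogue) is literally a permutation character of $\W_n$ --- in type B/C this means describing positive and negative $r$-cycle substructures and their stabilizers from Young's theorem --- and checking uniformly in $n$ that each point-stabilizer contains a conjugate of $\W_{n-D}$ while the orbit count stays bounded independently of $n$. The finitely many values $n < D$, where $\W_{n-a}$ must be read with a convention, are handled directly; everything else is Frobenius reciprocity and the monotonicity of invariant dimensions.
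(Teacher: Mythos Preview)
The paper does not itself supply a proof of this lemma; it is cited from \cite{CEFPointCounting} and \cite{JIMWIL1}. Your direction II $\Rightarrow$ I is correct and is the standard argument.

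Your direction I $\Rightarrow$ II contains a genuine error. You assert that each monomial $\prod_r \binom{X_r}{a_r}$ (or its type B/C analogue), restricted to $\W_n$, is the permutation character of some $\W_n$--set of ``$r$-cycle substructures''. This is false whenever any $a_r\neq 0$ with $r\geq 2$: a permutation character $\chi_\Omega$ always satisfies $\chi_\Omega(\mathrm{id})=|\Omega|\geq \chi_\Omega(\sigma)$ for every $\sigma$, yet for instance $X_2(\mathrm{id})=0$ while $X_2\big((1\,2)\big)=1$. Whatever $\W_n$--set you build, the identity fixes every configuration, but the identity has no $r$--cycles for $r\geq 2$. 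Since your bound $\langle P_n,A^d_n\rangle\leq N_\alpha\dim_\k(A^d_n)^{\W_{n-D}}$ from the first paragraph rests on $P_n$ being a genuine permutation character, the argument breaks here.

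The repair keeps your Frobenius-reciprocity framework but replaces permutation characters by the characters of the free $\FIW$--modules $M_\W(U)$. The character polynomials of degree $\leq D$ are spanned over $\Q$ by the character polynomials $\chi_{M_\W(U)}$ as $U$ ranges over irreducible $\W_m$--representations with $m\leq D$ (both families are indexed by partitions, respectively double partitions, of integers $\leq D$). For each such $U$, Frobenius reciprocity gives
\[
\big\langle \chi_{M_\W(U)_n},\, A^d_n\big\rangle_{\W_n}
=\big\langle U\boxtimes\mathbf{1},\,\Res^{\W_n}_{\W_m\times\W_{n-m}}A^d_n\big\rangle
=\big\langle U,\,(A^d_n)^{\W_{n-m}}\big\rangle_{\W_m},
\]
whose absolute value is at most $\dim_\k(A^d_n)^{\W_{n-m}}\leq \dim_\k(A^d_n)^{\W_{n-D}}$ since $U$ has finite order on this space. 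Writing $P=\sum_U c_U\,\chi_{M_\W(U)}$ and summing then yields $|\langle P_n,A^d_n\rangle|\leq C_P\,F_D(d)$, which is condition II. An equivalent and slightly more explicit route is to compute directly, for $P=\prod_r\binom{X_r}{a_r}$ with $m=\sum r a_r$ and $\tau_\mu\in\W_m$ of the corresponding cycle type,
\[
\langle P_n, A^d_n\rangle_{\W_n}=\frac{1}{z_\mu}\,\mathrm{tr}\big(\tau_\mu\,\big|\,(A^d_n)^{\W_{n-m}}\big),
\]
and bound the trace by the dimension; this avoids the spanning statement altogether.
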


\section{Convergence and nonconvergence results}\label{ASYMSection}

In recent work \cite{JIMWIL1} the authors prove that $\FIW$--algebras finitely generated in $\FIW$--degree zero or one satisfy a certain convergence result; the precise statement is given in Theorem \ref{ConvergenceCriterion} below. We use this theorem to investigate the structure of coinvariant algebras of type A and type B/C. Our results can be interpreted as asymptotic convergence results for 'polynomial statistics' on maximal tori in the corresponding matrix groups over finite fields; see \cite[Theorem 5.6]{CEFPointCounting} and \cite[Theorem 4.3]{JIMWIL1}. 

Many naturally arising $\FIW$--algebras, however, are finitely generated by elements in $\FIW$-degree two or higher, and 
so do not fall within the scope of Theorem \ref{ConvergenceCriterion}. These include the main examples of Sections \ref{ASYMSection} and \ref{POLCOUNTING}, the cohomology algebras of the hyperplane complements associated to braid arrangements in type A and B/C. Again we are faced with the question of whether these $\FIW$--algebras are convergent in the sense of Theorem \ref{ConvergenceCriterion}: in Section \ref{POLCOUNTING} we describe how this result corresponds to convergence results for certain statistics on polynomials over finite fields. 

An optimistic, if naive, conjecture is that all finitely generated $\FIW$--algebras satisfy the form of convergence of Theorem \ref{ConvergenceCriterion}.  Unfortunately, in Section \ref{SectionConvergenceFails}, we show that this is not the case in general. In Sections \ref{SectionBraid} and \ref{SectionBCBraid}, however, we develop combinatorial strategies for proving convergence in our specific examples, the cohomology of the hyperplane complements. Proposition \ref{PropArnoldAlg} gives a new proof of convergence in type A, and Theorem \ref{TheoremStabilityHyperplanes} establishes convergence in type B/C.

\subsection{Failure of convergence for free $\FIW$--algebras generated in degree 2} \label{SectionConvergenceFails}

In earlier work \cite[Theorem 3.5]{JIMWIL1}  the authors prove a convergence result for free (graded)-commutative $\FIW$--algebras on generators in $\FIW$--degrees zero or one. 

\begin{thm}{\bf (Criteria for convergent $\FIW$--algebras).} {\cite[Theorem 3.5]{JIMWIL1}}. \label{ConvergenceCriterion} Let $\W_n$ be one of the families $S_n$ or $B_n$, and let $\k$ be a subfield of $\C$. For nonnegative integers $b,c$, define a graded $\FIW$--module over $\k$
$$V \cong M_{\W}({\bf 0})^{\oplus b} \oplus M_{\W}({\bf 1})^{\oplus c}$$ 
with positive gradings. 
Let $\Gamma^*$ be the commutative, exterior, or graded-commutative algebra generated by $V$. Let $\{A_d^n\}$ be any sequence of graded $\W_n$-representations such that $A^d_n$ is a subrepresentation of  $\Gamma_{\W_n}^d$. Then for any $\W_n$ character polynomial $P$ and integer $q >1$, the following sequence converges asbolutely: $$ \sum_{d=0}^{\infty} \frac{\lim_{n \to \infty} \langle P_n, A^d_n \rangle_{\W_n}}{q^d} . $$ 
\end{thm}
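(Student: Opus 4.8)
The plan is to reduce the convergence of the series $\sum_d q^{-d}\lim_n\langle P_n, A^d_n\rangle_{\W_n}$ to a uniform bound on the rate of growth (in $d$) of the coinvariant dimensions $\dim_\k\big((\Gamma^d_{\W_n})^{\W_{n-a}}\big)$, using the equivalence between coinvariant bounds and character-polynomial inner-product bounds from Lemma~\ref{EQUIV}. Since each $A^d_n$ is a subrepresentation of $\Gamma^d_{\W_n}$, we have $|\langle P_n, A^d_n\rangle_{\W_n}| \le |\langle P_n, \Gamma^d_{\W_n}\rangle_{\W_n}| + (\text{terms from the negative-coefficient monomials of }P)$, so it suffices to prove the bound for $\Gamma^*$ itself; and because $\Gamma$ is finitely generated as an $\FIW$--algebra in positive graded-degree, Lemma~\ref{FGCHARALGSTAB} already guarantees each limit $\lim_n\langle P_n,A^d_n\rangle_{\W_n}$ exists, so only the summability over $d$ is at issue. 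By Lemma~\ref{EQUIV} it is enough to exhibit, for each fixed $a$, a bound $\dim_\k\big((\Gamma^d_{\W_n})^{\W_{n-a}}\big) \le F_a(d)$ independent of $n$ with $\sum_d F_a(d)/q^d < \infty$; in fact one wants $F_a(d)$ to be subexponential in $d$, i.e. $F_a(d) = O(C^d)$ for every $C>1$ would suffice, but more realistically a polynomial-in-$d$ bound.

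The key computation is therefore: understand the $\W_{n-a}$--invariants of the degree--$d$ piece of the free (graded-)commutative algebra on $V \cong M_\W(\mathbf 0)^{\oplus b}\oplus M_\W(\mathbf 1)^{\oplus c}$. First I would reduce to the two building blocks separately. Generators in $\FIW$--degree $0$ contribute a polynomial (or exterior) algebra on finitely many $\W_n$--invariant variables, whose degree--$d$ piece has dimension bounded by a fixed polynomial in $d$ of degree $b-1$ (or is eventually zero, in the exterior case), independent of $n$ — this part is immediate. The substantive part is the free (graded-)commutative algebra on $M_\W(\mathbf 1)^{\oplus c}$, i.e. on variables $x^{(i)}_j$ with $1\le i\le c$, $1\le j\le n$, with $\W_n$ permuting the subscript $j$ (and possibly negating). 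A degree--$d$ monomial is a multiset of $d$ such variables; its $\W_n$-orbit, and more importantly its $\W_{n-a}$-orbit, is determined by the "shape" of which subscripts appear with which multiplicities, together with how the first $a$ subscripts are distinguished. The crucial point is that a $\W_{n-a}$-invariant vector in $\Gamma^d$ is a sum over a $\W_{n-a}$-orbit of monomials, and such an orbit is pinned down by: (i) the data on the $a$ "frozen" coordinates (a monomial of some degree $d' \le d$ in $c\cdot a$ variables, giving at most $\mathrm{poly}(d)$ choices), and (ii) the multiset of multiplicity-vectors on the remaining free coordinates summing to $d - d'$ — but since only coordinates with nonzero multiplicity matter, and at most $d$ of them can be nonzero, this is a partition-type count bounded by the number of multisets of size $\le d$ drawn from the finitely many ($\le 2^c$ or so) possible multiplicity-vectors per coordinate, which is again $\mathrm{poly}(d)$.

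Assembling these, $\dim_\k\big((\Gamma^d_{\W_n})^{\W_{n-a}}\big)$ is bounded, independently of $n$, by a polynomial in $d$ (of degree depending on $a,b,c$), so we may take $F_a(d)$ polynomial; then $\sum_d F_a(d)/q^d$ converges for any $q>1$, and by Lemma~\ref{EQUIV}(II) the same holds with $F_a$ replaced by $F_P$ for any character polynomial $P$, giving absolute convergence of $\sum_d q^{-d}\lim_n\langle P_n,A^d_n\rangle_{\W_n}$. The main obstacle I anticipate is step (ii) of the orbit count: making precise and uniform in $n$ the claim that $\W_{n-a}$-orbits of degree-$d$ monomials in the $M_\W(\mathbf 1)^{\oplus c}$ variables are parametrized by a set of size polynomial in $d$ — one must be careful that the "frozen" coordinates $1,\dots,a$ interact correctly with the permutation action, handle the sign-negation subtlety in type $B$ (a monomial and its negative may or may not be identified), and verify that passing from $\Gamma_{\W_n}$ to a subrepresentation $A^d_n$ and then to the signed combinations of monomials required by a general $P$ (rather than just counting orbits of the trivial character) only costs another polynomial-in-$d$ factor, which is exactly what Lemma~\ref{EQUIV} is designed to absorb.
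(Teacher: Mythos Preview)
The paper does not prove this theorem; it is quoted from the authors' earlier work \cite[Theorem 3.5]{JIMWIL1} as a known result, so there is no in-paper argument to compare against. Your overall strategy---reduce via Lemma~\ref{EQUIV} to a uniform-in-$n$ bound on $\dim_\k (\Gamma^d_{\W_n})^{\W_{n-a}}$ and check that $\sum_d F_a(d)/q^d<\infty$---is exactly the template the paper itself uses for Proposition~\ref{PropArnoldAlg} and Theorem~\ref{TheoremStabilityHyperplanes}, and it is the right one here.

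There is, however, a concrete error in your growth estimate. You claim the $\W_{n-a}$--orbit count for degree--$d$ monomials on $M_\W({\bf 1})^{\oplus c}$ is polynomial in $d$, on the grounds that the per-coordinate multiplicity vectors are drawn from ``finitely many ($\le 2^c$ or so)'' types. In the symmetric-algebra case this is false: a multiplicity vector lies in $\Z_{\ge 0}^c$, not $\{0,1\}^c$. Already for $c=1$ and $a=0$ the invariant dimension is the partition number $p(d)$ (the degree--$d$ part of $\k[x_1,\dots,x_n]^{S_n}$ has dimension $p(d)$ once $n\ge d$), which grows like $\exp(\pi\sqrt{2d/3})$. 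The correct bound is therefore \emph{subexponential}, not polynomial---but subexponential is precisely what is needed, since then $F_a(d)=o(C^d)$ for every $C>1$ and $\sum_d F_a(d)/q^d$ converges for every $q>1$. So the step fails as written but is easily repaired by replacing ``polynomial'' with ``subexponential'' and invoking standard partition asymptotics. A smaller issue: your appeal to Lemma~\ref{FGCHARALGSTAB} for the existence of $\lim_n\langle P_n,A^d_n\rangle_{\W_n}$ does not apply, since the $A^d_n$ are arbitrary $\W_n$--subrepresentations with no assumed $\FIW$--structure; what your argument actually delivers via Lemma~\ref{EQUIV} is only the uniform bound $|\langle P_n,A^d_n\rangle_{\W_n}|\le F_P(d)$ for all $n$.
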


In this next proposition we show that if we replace $V$ by $$M(\,\Y{2}\,) = M(\mathbf{2}) \otimes_{\k[S_2]} \k \text{\ \ \ \ or \ \ \ \  }M_{BC}(\,\Y{2}\, ,\varnothing)= M_{BC}(\mathbf{2}) \otimes_{\k[B_2]} \k,$$  
which are in some sense the next 'smallest' projective  $\FIW$--modules,  then the convergence result no longer holds.  

\begin{prop} {\bf (Nonconvergence for symmetric $\FIW$--algebras on $ M(\,\Y{2}\,)$  or $ M_{BC}(\,\Y{2}\, , \varnothing)$).} \label{FIWAlgebrasOnTrv2} Let $\k$ be a subfield of $\C$, and $q>1$ an integer. 
  Let $V$ be the free $\FIW$--module on the trivial $\W_2$--representation $\k$, concretely, this is $ M(\,\Y{2}\,)$  in type $A$ or $ M_{BC}(\,\Y{2}\, , \varnothing)$ in type B/C. Assume $V$ is graded by some positive grading.  Let $A^*$ be an $\FIW$--algebra containing the symmetric algebra on $V$. Then there exist character polynomials $P$ for which the following series does not converge:   $$ \sum_{d=0}  \frac{ \lim_{n \to \infty} \langle P_n, A^d_n \rangle_{\W_n}}{q^d}. $$ 
			\end{prop}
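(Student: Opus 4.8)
I want to show that the free symmetric $\FIW$--algebra on $V = M_\W(\,\Y{2}\,)$ (type A) or $M_{BC}(\,\Y{2}\,,\varnothing)$ (type B/C) contains, in graded-degree $d$, a subrepresentation whose $\W_n$--invariant part (i.e.\ the inner product with $P=1$) grows too fast in $d$ for the series $\sum_d \langle 1, A^d_n\rangle_{\W_n} / q^d$ to converge, and that this growth is already visible stably in $n$. By Lemma \ref{EQUIV}, it suffices to exhibit a lower bound on $\dim_\k\big((A^d_n)^{\W_n}\big)$ (the $a=0$ case) that grows faster than any geometric series $q^d$ — in fact I expect factorial-type growth. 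Since $A^*$ \emph{contains} the free symmetric algebra on $V$, and all the representations in sight are honest (not virtual), any lower bound on invariants of the free symmetric algebra transfers to $A^*$; so I may as well take $A^*$ to be exactly $\mathrm{Sym}(V)$ and work with its degree-$d$ piece $\mathrm{Sym}^d(V)$ (after collecting the appropriate graded pieces — the positive grading only shifts which graded-degree these invariants land in, and since there are infinitely many nonzero ones they still obstruct convergence).

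**Key steps, in order.** First, compute $V_n$ concretely: $M(\,\Y{2}\,)_n = \k[S_n/S_{n-2}]$ is the permutation representation on ordered pairs of distinct elements of $\{1,\dots,n\}$, equivalently the free $\k$--module on the set of ordered pairs (or: the linearization of injections $\mathbf{2}\hookrightarrow \mathbf{n}$); in type B/C, $M_{BC}(\,\Y{2}\,,\varnothing)_n = \k[B_n/B_{n-2}]$ is the linearization of $\FI_{BC}$--injections, a set of size $(2n)(2n-2)\cdots$ of "signed ordered pairs." Second, analyze $\mathrm{Sym}^d(V_n)$: its $\W_n$--invariants have dimension equal to the number of $\W_n$--orbits on the set of degree-$d$ monomials in these generators, i.e.\ on unordered $d$-multisets of (signed) ordered pairs from an $n$-element (or $2n$-element) ground set. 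Third — the crux — bound this orbit count from below: a multiset of $d$ ordered pairs is a functional digraph / directed multigraph on $n$ labelled vertices with $d$ edges, and two are in the same $\W_n$-orbit iff the digraphs are isomorphic; so $\dim(A^d_n)^{\W_n}$ is (at least) the number of isomorphism classes of directed multigraphs with $d$ edges on at most $n$ vertices. Fourth, observe that for $n \geq 2d$ this count is \emph{independent of $n$} (every such digraph uses $\leq 2d$ vertices), so the stable limit $\lim_n \langle 1, A^d_n\rangle$ equals the number of iso-classes of $d$-edge directed multigraphs, and this is superexponential in $d$ — e.g.\ already the digraphs that are disjoint unions of single edges (a matching of $2j$ vertices plus loops) give on the order of the partition function, and using edges between a fixed small set of vertices gives genuinely factorial growth; any clean superexponential lower bound (say $\gtrsim 2^{\,c\sqrt d}$, which beats $q^{-d}$, or better $\gtrsim d!^{1/3}$) suffices. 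Fifth, conclude: since $\lim_n\langle 1, A^d_n\rangle$ is not $O(q^d)$ for any $q$, the series with $P=1$ diverges; finally pull the grading back in — the positive grading distributes these invariant-bearing pieces among infinitely many graded-degrees but does not bound their sizes, so divergence persists (one may even arrange the statement for $P=1$ exactly, as claimed).

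**The main obstacle.** The technical heart is the third and fourth steps: correctly identifying $\dim(\mathrm{Sym}^d V_n)^{\W_n}$ with an orbit/isomorphism count and extracting a \emph{rigorous} superexponential lower bound that is moreover attained stably in $n$. For type A this is the statement that symmetric powers of the "ordered pairs" representation have invariant dimension counting directed multigraphs, which is classical; the care needed is (i) to make sure the positive grading on $V$ doesn't accidentally force all the interesting invariants into a single graded-degree where they could be reorganized away — it cannot, since $\mathrm{Sym}^d$ of a graded module in grades $\geq 1$ lands in grades $\geq d$ and each grade is finite-dimensional — and (ii) to handle the type B/C bookkeeping, where the relevant category is $\FI_{BC}$ and orbits are of \emph{signed} digraphs, but the same lower bound (ignoring signs, i.e.\ restricting to the index-$2^n n!$ subgroup's orbits) goes through \emph{a fortiori}. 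A secondary subtlety is the quantifier in "$A^*$ contains the symmetric algebra on $V$": since everything is a genuine representation over a field of characteristic zero, $\langle 1, A^d_n\rangle \geq \langle 1, \mathrm{Sym}^d(V)_n\rangle$ termwise, so no cancellation can rescue convergence — this is exactly why the hypothesis "supported in positive grades" and "honest submodule" matter. I expect the proof to be short modulo naming the right superexponential sequence.
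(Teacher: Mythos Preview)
Your overall strategy matches the paper's: reduce to $P=1$ and to $A^*=\mathrm{Sym}(V)$, identify the $\W_n$--invariants in graded-degree $d$ with isomorphism classes of graphs on $d$ edges, observe this count stabilises for $n\geq 2d$, and show it grows too fast for any series $\sum G_d/q^d$ to converge. Two points need correction, one minor and one genuinely a gap.

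\textbf{Misidentification of $V_n$.} You write $M(\,\Y{2}\,)_n=\k[S_n/S_{n-2}]$, the ordered-pairs representation. That is $M(\mathbf{2})_n$. By definition $M(\,\Y{2}\,)=M(\mathbf{2})\otimes_{\k[S_2]}\k$ with $\k$ the \emph{trivial} $S_2$--representation, so the right $S_2$--action identifies $x_{i,j}$ with $x_{j,i}$: the basis is indexed by \emph{unordered} pairs, and the graphs encoding monomials are undirected. In type B/C the same tensoring with the trivial $B_2$--representation kills the signs entirely, so the $B_n$--action on $M_{BC}(\,\Y{2}\,,\varnothing)_n$ factors through $B_n\twoheadrightarrow S_n$; there are no ``signed pairs'' and the $B_n$--invariants coincide with the $S_n$--invariants of the type A picture. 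This error does not change the shape of the argument, but your combinatorial model (and your type B/C discussion) should be adjusted accordingly.

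\textbf{The lower bound is the real issue.} You assert that ``any clean superexponential lower bound (say $\gtrsim 2^{c\sqrt d}$)'' suffices, and offer the partition function as a first example. Neither works: $2^{c\sqrt d}/q^d\to 0$ and $\sum p(d)/q^d$ converges for every $q>1$, so a bound of partition-function type establishes nothing. What you actually need is that the number $G_d$ of isomorphism classes of (simple, undirected) graphs on $d$ edges satisfies $G_d^{1/d}\to\infty$, i.e.\ grows faster than every fixed exponential. This is true but not elementary; the paper invokes Lupanov's asymptotic
\[
G_d\ \gtrsim\ \left[\frac{2}{e}\,\frac{d}{\ln^2 d}\left(\frac{2\ln(\ln d)}{\ln d}+1\right)\right]^d,\qquad\text{so}\qquad G_d^{1/d}\sim \frac{2}{e}\,\frac{d}{\ln^2 d}\to\infty.
\]
Your sketch ``edges between a fixed small set of vertices gives factorial growth'' does not work either: with $k$ vertices fixed there are only $\binom{k}{2}$ edge slots, and distributing $d$ multi-edges among them gives a count polynomial in $d$. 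You must either cite an asymptotic like Lupanov's or supply a direct argument that the number of graphs with $d$ edges (no isolated vertices) has $d$-th root tending to infinity; without it the proof is incomplete.
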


\begin{proof} Suppose that $V$ is concentrated in graded--degree $\ell \geq 1$. It is enough to consider the character polynomial $P=1$ and the case that $A^*$ is equal to the symmetric $\FIW$--algebra freely generated by $V$. The inner product  $\langle 1, A^d_n \rangle_{\W_n}$ is the dimension of the $\W_n$--invariant subspace of $A^d_n$; this value  is nonnegative and could only grow if $A^*$ were larger. 

The graded $\FIW$--module $V$ is generated by a single generator $x_{1,2}$ in $\FIW$--degree $2$ and graded-degree $\ell$. Then $A^*$ is finitely generated by $x_{1,2}$ as an $\FIW$--algebra, and Lemma \ref{FGCHARALGSTAB} guarantees that the limit $\lim_{n \to \infty} \langle P, A^d_n \rangle_{\W_n}$ exists.

We first consider the $\FI$--algebra in type $A$. The FI--module $M(\,\Y{2}\,)$ has bases $$ M(\,\Y{2}\,)_n \cong \langle \; x_{i,j} \;  \; | \; \; i \neq j, \quad i,j \in [n], \quad x_{i,j}=x_{j,i} \; \rangle.$$ 
Then $A_n^{d \ell}$ is spanned by monomials in $d$ commuting variables $x_{i,j}$, and we may index these monomials by graphs on vertices labelled by $1, \ldots, n$ and an edge $(i,j)$ for each variable $x_{i,j}$ that occurs. The $S_n$--orbits of these monomials are indexed by unlabelled graphs on $d$ edges. 


For $n$ sufficiently large, say $n>2d$, the number of these unlabelled graphs is independent of $n$; we can simply consider those graphs with $d$ edges and without isolated vertices. We will, in fact, only need to consider monomials without repeated variables, so the corresponding graphs have no multi-edges. 

Let $G_d$ denote the number of graphs on $d$ edges without loops, multi-edges, or isolated vertices. Lupanov \cite{LupanovEdges} showed that $$G_d \gtrsim  \left[\frac{2}{e} \frac{d}{\ln^2 d} \left( \frac{2 \ln(\ln d)}{\ln d}  +1 \right)\right]^d.$$ 
In particular $$ \sqrt[d]{G_d} \sim \frac{2}{e} \frac{d}{\ln^2 d}.$$ 
It follows that the series $ \displaystyle \sum_{d \geq 0} \frac{G_d}{q^{\ell d}}$ does not converge for any $\ell \geq 1$ or  $q>1$. For $n>2d$, $$ \langle 1, A^{\ell d}_n \rangle_{S_n} = \dim_{\k}( (A^{\ell d}_n)^{S_n}) > G_d$$ and we conclude that the series 
$$ \sum_{d=0}^{\infty}  \frac{ \lim_{n \to \infty} \langle P_n, A^d_n \rangle_{S_n}}{q^d} 
= \sum_{d=0}^{\infty} \frac{ \lim_{n \to \infty} \langle P_n, A^{\ell d}_n \rangle_{S_n}}{q^{\ell d}}  $$ 
does not converge for the constant class function $P=1$.

The result in type B/C follows the same argument; again $$ M_{BC}(\,\Y{2}\, , \varnothing)_n \cong \langle x_{i,j} \; | \; i \neq j, \quad i,j \in [n], \quad x_{i,j}=x_{j,i} \rangle$$ with the action of $B_n$ on the generators factoring through the quotient $B_n \twoheadrightarrow S_n$. Thus the $B_n$--invariants are isomorphic to the $S_n$--invariants above, and the same graph-theoretic bounds imply that the series  $$ \sum_{d=0}  \frac{ \lim_{n \to \infty} \langle P_n, A^d_n \rangle_{B_n}}{q^d} $$ will not converge in the case that $P=1$. 
\end{proof}

\begin{rem}  {\bf (Nonconvergence for exterior $\FIW$--algebras on $ M(\,\Y{2}\,)$  or $ M_{BC}(\,\Y{2}\, , \varnothing)$).}  \label{RemarkExterior} We expect that the nonconvergence result of Proposition \ref{FIWAlgebrasOnTrv2} would also hold for the free exterior $\FIW$--algebras on $ M(\,\Y{2}\,)$  or $ M_{BC}(\,\Y{2}\, , \varnothing)$. Heuristically, this is because there is an asymptotic sense in which graphs generically have trivial automorphism groups -- and graphs with trivial automorphism groups will correspond to orbits of anticommutative monomials that are nonzero in the $\W_n$--quotient. 
\end{rem} 

We can leverage the results of Proposition \ref{FIWAlgebrasOnTrv2} to prove that convergence fails for any commutative $\FIW$--algebra generated by a free $\FIW$--module on a $\W_2$--representation.

\begin{thm} {\bf (Nonconvergence for symmetric $\FIW$--algebras generated in $\FIW$--degree 2).}  \label{NOLIMIT}  Let $\k$ be a subfield of $\C$, and $q>1$ an integer. 
 Let $V$ be a graded $\FIW$--module over $\k$ supported in positive grades containing a free $\FIW$--module on a representation of $\W_2$.  Let $A^*$ be an $\FIW$--algebra  containing the free symmetric algebra on $V$. Then there exist character polynomials $P$ for which the following series does not converge:  $$ \sum_{d=0}  \frac{ \lim_{n \to \infty} \langle P_n, A^d_n \rangle_{\W_n}}{q^d}. $$ 
\end{thm}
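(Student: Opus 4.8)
The strategy is to reduce Theorem \ref{NOLIMIT} to the special case already established in Proposition \ref{FIWAlgebrasOnTrv2}, namely the case of the free symmetric algebra on $M(\Y{2})$ (respectively $M_{BC}(\Y{2},\varnothing)$). The starting observation is that a free $\FIW$--module on \emph{any} nonzero $\W_2$--representation $U$ contains a copy of the free $\FIW$--module on the trivial representation, or at least of a free $\FIW$--module whose symmetric algebra exhibits the same superexponential growth of invariants. Concretely, the regular representation $\k[\W_2]$ surjects onto (and, in characteristic zero, contains) the trivial representation, and every nonzero $\W_2$--representation $U$ contains some irreducible $\W_2$--summand; in either the symmetric or hyperoctahedral case $\W_2$ has only finitely many irreducibles, all one-dimensional except for the standard $B_2$--reflection representation, so it suffices to run the graph-counting argument for each of these finitely many $\W_2$--representations in the generator slot and observe that the conclusion is insensitive to which sign character appears (the sign just twists the $\W_n$--action on monomials, and the orbit-counting lower bound survives).

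\textbf{Key steps, in order.} First I would make precise the containment hypothesis: $V$ contains, as a graded $\FIW$--submodule, a shift $M_\W(U)[\ell]$ for some $\W_2$--representation $U\ne 0$ and some $\ell\ge 1$ (positivity of the grading gives $\ell\ge 1$). Second, since $A^*$ contains the free symmetric algebra on $V$, and the free symmetric algebra is a covariant functor of $\FIW$--modules that preserves injections of free modules onto direct summands, $A^*$ contains the free symmetric algebra on $M_\W(U)[\ell]$; and as in the proof of Proposition \ref{FIWAlgebrasOnTrv2}, for the constant character polynomial $P=1$ the inner product $\langle 1, A^d_n\rangle_{\W_n}=\dim_\k (A^d_n)^{\W_n}$ is monotone under enlarging $A^*$, so it is enough to treat $A^* = \mathrm{Sym}(M_\W(U)[\ell])$. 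Third, pick a one-dimensional $\W_2$--summand $\chi$ of $U$ (this always exists: in type A the only irreducibles of $S_2$ are one-dimensional, and in type B/C every $B_2$--representation contains a one-dimensional summand unless it is a sum of copies of the $2$--dimensional irreducible, a case handled separately by passing to $\mathrm{Sym}^2$ or $\Lambda^2$ of that irreducible to extract a one-dimensional piece), so $A^*$ contains $\mathrm{Sym}(M_\W(\chi)[\ell])$. Fourth, run the graph-counting computation of Proposition \ref{FIWAlgebrasOnTrv2} verbatim: $\mathrm{Sym}(M_\W(\chi)[\ell])^{\ell d}_n$ has a basis of degree--$d$ monomials in the variables $x_{i,j}$, $\W_n$--orbits are indexed by unlabelled $d$--edge graphs, the $\W_n$--action on each orbit factors through a finite sign, and for $n>2d$ the number of $\W_n$--invariants is bounded below by $G_d$ (the number of isolated-vertex-free, loop-free, multi-edge-free $d$--edge graphs), or by a fixed positive fraction of $G_d$ after accounting for the sign twist — the point being that graphs with trivial automorphism group generically dominate, exactly the heuristic of Remark \ref{RemarkExterior}, and these always contribute a nonzero invariant regardless of the sign character. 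Fifth, invoke Lupanov's bound $\sqrt[d]{G_d}\sim \tfrac{2}{e}\,\tfrac{d}{\ln^2 d}\to\infty$ to conclude $\sum_d G_d/q^{\ell d}$ diverges, hence $\sum_d \langle 1,A^{\ell d}_n\rangle_{\W_n}/q^{\ell d}$ diverges, hence the full series over all $d$ diverges; Lemma \ref{FGCHARALGSTAB} guarantees the relevant limits in $n$ exist, so the divergent statement is about a well-defined series.

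\textbf{Expected main obstacle.} The routine part is the reduction and the invocation of Lupanov; the one genuinely delicate point is handling the sign characters and, in type B/C, the two-dimensional irreducible of $B_2$, so that the graph-counting lower bound still yields \emph{nonzero} invariants after the $\W_n$--action is twisted. The clean way to dispose of this is the observation in Remark \ref{RemarkExterior}: a $d$--edge graph with no nontrivial automorphisms has an orbit on which $\W_n$ acts freely, and for any one-dimensional character $\epsilon$ of $\W_n$ the $\epsilon$--isotypic part of $\k[\W_n\text{-orbit}]$ is one-dimensional and nonzero, so each such graph contributes at least one invariant (more precisely, one $\epsilon$--equivariant vector, which is what survives into the twisted symmetric algebra). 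Since asymptotically almost all $d$--edge graphs have trivial automorphism group, the count of such contributions is still asymptotically $G_d$, preserving divergence. For the two-dimensional $B_2$--irreducible one instead notes that $\mathrm{Sym}^2$ of it, or the tensor square, contains a one-dimensional summand on which to base the same argument, at the cost of replacing $\ell$ by $2\ell$, which does not affect divergence of $\sum_d G_d/q^{\ell d}$ for any $\ell\ge 1$. Assembling these observations gives the theorem.
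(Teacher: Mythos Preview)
Your overall strategy---reduce to Proposition~\ref{FIWAlgebrasOnTrv2} by locating a copy of the free $\FIW$--module on the trivial $\W_2$--representation inside the symmetric algebra---is exactly the paper's strategy. The difference is in how you handle the nontrivial one--dimensional $\W_2$--characters, and here your argument has a genuine gap.

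You claim that a graph with trivial automorphism group contributes a nonzero $\W_n$--invariant ``regardless of the sign character,'' invoking the heuristic of Remark~\ref{RemarkExterior}. This is false in type B/C for the characters on which the transposition $(i\,\overline{i})$ acts by $-1$, namely $M_{BC}(\varnothing,\Y{2})$ and $M_{BC}(\varnothing,\Y{1,1})$. The point is that $(i\,\overline{i})$ lies in the $B_n$--stabilizer of \emph{every} monomial (it does not permute indices), and it acts on a monomial corresponding to a graph $G$ by $(-1)^{\deg_G(i)}$. So the orbit sum vanishes unless every vertex of $G$ has even degree---a condition completely unrelated to $\mathrm{Aut}(G)$ being trivial. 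Your asymmetry argument does not see this constraint, and you would be left needing an asymptotic count of even--degree (Eulerian) graphs, which is a different and harder problem.

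The paper sidesteps this entirely with a one--line trick: for any one--dimensional $\W_2$--character $\chi$, the squares $x_{i,j}^2$ inside $\mathrm{Sym}(M_\W(\chi))$ are fixed by every sign (since $\chi^2$ is trivial), hence span a copy of $M_\W(\,\Y{2}\,)$ or $M_{BC}(\,\Y{2}\,,\varnothing)$; they are algebraically independent because we are in a polynomial ring; and now Proposition~\ref{FIWAlgebrasOnTrv2} applies verbatim. This is simpler than your route and requires no facts about asymmetric graphs. (You essentially already do this for the two--dimensional $B_2$--irreducible by passing to $\mathrm{Sym}^2$; the paper's move is to do it uniformly for all the one--dimensional characters as well, using the concrete elements $(x_i\otimes y_j)^2+(x_j\otimes y_i)^2$ in the two--dimensional case.)
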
 

\begin{proof} In light of Proposition \ref{FIWAlgebrasOnTrv2}, it suffices to consider the cases that $V$  is equal to $M\left(\,\Y{1,1}\,\right)$ if it is type A, or equal to one of  $$  M_{BC}\left(\,\Y{1,1}\, , \varnothing \right), \quad  M_{BC}(\,\Y{1}\, , \Y{1}), \quad   M_{BC}(\, \varnothing, \; \Y{2}\,), \quad  \text{or} \quad   M_{BC}\left(\, \varnothing, \; \Y{1,1}\,\right)$$ if it is type B/C.

 If $V$ is any of the $\FIW$--modules $M\left(\,\Y{1,1}\,\right)$, $M_{BC}\left(\,\Y{1,1}\, , \varnothing \right)$, $M_{BC}(\, \varnothing, \; \Y{2}\,)$, or $M_{BC}\left(\, \varnothing, \; \Y{1,1}\,\right)$,  then $V_n$ is spanned by elements of the form $x_{i,j}$ with $i \neq j$, $i,j \in [n]$, and  $x_{i,j} = \pm \, x_{j,i}$, where $\W_n$ acts by permuting the indices and negating the variables. Specifically, 
\begin{align*} 
M\left(\,\Y{1,1}\,\right) &= \langle x_{i,j} \; | \; x_{i,j} = -x_{j,i} \rangle && \qquad \\ 
M_{BC}\left(\,\Y{1,1}\, , \varnothing \right) &= \langle x_{i,j} \; | \; x_{i,j} = -x_{j,i} \rangle, && (i \; \oo{i}) \cdot x_{i,j} = x_{i,j}&& \qquad && \qquad\\
M_{BC}(\, \varnothing, \; \Y{2}\,) &= \langle x_{i,j} \; | \; x_{i,j} = x_{j,i} \rangle, && (i \; \oo{i}) \cdot x_{i,j} = -x_{i,j} && \qquad && \qquad\\
M_{BC}\left(\, \varnothing, \; \Y{1,1}\,\right) &= \langle x_{i,j} \; | \; x_{i,j} = -x_{j,i} \rangle, && (i \; \oo{i}) \cdot x_{i,j} = -x_{i,j} && \qquad && \qquad 
\end{align*}
 The squares $(x_{i,j})^2 \in A^*$ span a copy of $M(\,\Y{2}\,)$ in type $A$ or  $M_{BC}(\,\Y{2}\, , \varnothing)$ in type B/C. Since these squares are algebraically independent, $A^*$ contains the polynomial algebra they generate, and the result follows from Proposition \ref{FIWAlgebrasOnTrv2}. 

It remains to address the $\FI_{BC}$--module $M_{BC}(\,\Y{1}\, , \Y{1})$. In $\FI_{BC}$--degree $n$ this module is spanned by variables 
$$\{ x_{i}\otimes y_{j} \; | \; i \neq j, \; i,j \in [n] \}, \qquad   (i \; \oo{i})\cdot x_{i}\otimes y_{j} =  - x_{i}\otimes y_{j}, \quad     (j \; \oo{j})\cdot x_{i}\otimes y_{j} =  x_{i}\otimes y_{j}$$
and an action of $S_n \subseteq B_n$ by permuting the indices. But then the elements $\Big((x_{i}\otimes y_{j})^2 +  ( x_{j}\otimes y_{i})^2 \Big)$ are algebraically independent and span a copy of  $M_{BC}(\,\Y{2}\, , \varnothing) \subseteq A^*$, so again convergence must fail by Proposition \ref{FIWAlgebrasOnTrv2}.  \end{proof}

Theorem \ref{NOLIMIT} suggests the following problem. 

\begin{problem}{\bf (Convergence criteria for $\FIW$--algebra with generators in $\FIW$--degree $\leq 2$).} \label{ProblemConvergenceCriteria}
 Let $A^*$ be a finitely generated $\FIW$--algebra  with generators in $\FIW$--degree $\leq 2$. Given a presentation for $A$ as an $\FIW$--algebra, find combinatorial criteria on the relations that guarantee convergence in the sense of Theorem \ref{ConvergenceCriterion}. 
\end{problem} 

\subsection{The braid arrangement: an example of a convergent algebra} \label{SectionBraid}
 
Church--Ellenberg--Farb \cite[Proposition 4.2]{CEFPointCounting} showed that convergence in the sense of Theorem \ref{ConvergenceCriterion} does hold for the anticommutative $\FI$--algebra generated by $M(\Y{2})$ and subject to the ``Arnold relations". Our Theorem \ref{NOLIMIT} shows that finite generation as an $\FI$--algebra in FI--degree 2 is not enough in general to ensure this form of convergence, and that this result should be viewed as a feature of the relations that define this $\FI$--algebra.


 Let $\k$ be a subfield of $\C$. Consider the {\it Arnold algebra} over $\k$ $$\mathcal A^*_n := \frac{\bigwedge^*[ \a_{i,j} \; | \; i,j \in [n], \;, i\neq j, \; \a_{i,j} = \a_{j,i}]}{ ( \a_{i,j}\a_{j,k} + \a_{j,k}\a_{k,i} + \a_{k,i}\a_{i,j} )} $$ which Arnold \cite{Arnol'd} proved to be isomorphic to the cohomology ring of the pure braid group. This algebra is, equivalently, the cohomology $H^*(\mathcal{M}_{A_{n-1}}(\C);\k)$ of the complex hyperplane complement associated to the symmetric group's reflecting hyperplanes. This hyperplane arrangement is sometimes called the \emph{braid arrangement}.  Church--Ellenberg--Farb show that $\mathcal A^*$ has the structure of an $\FI$-algebra finitely generated by $\mathcal A^1= M(\Y{2})$  \cite[Example 5.1.3]{CEF}.

 Proposition \ref{PropArnoldAlg} below gives a simplified proof of a result of Church--Ellenberg--Farb \cite[Proposition 4.2]{CEFPointCounting}. These authors established the result using an explicit decomposition of each graded piece $\mathcal A^d_n$ as an $S_n$--representation, a decomposition proven in significant work of Lehrer--Solomon \cite[Theorem 4.5]{LehrerSolomon}. Our proof will serve as a warm-up to proving Theorem \ref{TheoremStabilityHyperplanes}, the analogous result in Type B/C.

\begin{prop} {\bf (Convergence for the braid arrangement, {\cite[Proposition 4.2]{CEFPointCounting}}). } \label{PropArnoldAlg} Let $\mathcal A^d_n$ denote the $d$th-graded piece of the  Arnold's algebra. For $q\geq 3$ and any character polynomial $P$, the sum   $$ \sum_{d=0}  \frac{ \lim_{n \to \infty} \langle P_n, \mathcal A^d_n \rangle_{S_n}}{q^d} $$
converges absolutely. 
\end{prop}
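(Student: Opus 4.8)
The plan is to establish the two conditions needed for absolute convergence of the series: first, that the inner product $\langle P_n, \mathcal{A}^d_n\rangle_{S_n}$ stabilizes as $n \to \infty$ (which is already guaranteed by Lemma \ref{FGCHARALGSTAB}, since $\mathcal{A}^*$ is finitely generated as an $\FI$--algebra by $\mathcal{A}^1$), and second, that the stable values $a_d := \lim_{n\to\infty}\langle P_n, \mathcal{A}^d_n\rangle_{S_n}$ grow at most exponentially with a base strictly less than $q$ — in fact I expect to show $|a_d|$ is bounded by $C^d$ for a fixed constant $C$ independent of $q$, perhaps even polynomially-bounded after dividing out an appropriate factor, which will immediately give absolute convergence for all $q \geq 3$ (and indeed all $q > C$). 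By Lemma \ref{EQUIV}, it suffices to bound $\dim_{\k}\big((\mathcal{A}^d_n)^{S_{n-a}}\big)$ uniformly in $n$ by a function of $d$ alone that is $O(C^d)$ for each fixed $a$.

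The key step is therefore a combinatorial/dimension bound on the $S_{n-a}$--invariants of $\mathcal{A}^d_n$, and this is where the Arnold relations do the work. As in the proof of Proposition \ref{FIWAlgebrasOnTrv2}, I would index a spanning set of $\mathcal{A}^d_n$ by graphs on vertex set $[n]$ with $d$ edges, where an edge $(i,j)$ records the generator $\a_{i,j}$; the anticommutativity means we pick an ordering of the edges and the Arnold relation says $\a_{i,j}\a_{j,k} + \a_{j,k}\a_{k,i} + \a_{k,i}\a_{i,j} = 0$. The crucial fact — due to Arnold, and reflected in the no-broken-circuit / Orlik--Solomon basis for the braid arrangement — is that $\mathcal{A}^d_n$ has a basis indexed by forests (more precisely, by the relevant NBC sets, which for the braid arrangement are in bijection with certain labelled forests on $d$ edges). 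A forest on $d$ edges has at most $d$ non-isolated vertices (in fact exactly $d + (\text{number of components})$ vertices among the non-isolated ones), so modulo the $S_n$-action — and even modulo the finer $S_{n-a}$-action, which only fixes the labels $n-a+1, \dots, n$ and lets the rest permute freely — the number of orbits of such basis elements is bounded by the number of (partially labelled) forests on $d$ edges, which grows only like $d^{O(d)}$... but I must be careful: $d^{O(d)}$ is \emph{not} $O(C^d)$. So the real point is sharper: a forest on $d$ edges has at most $2d$ vertices, hence at most $2d$ labels are "used," and the number of \emph{unlabelled} forests on $d$ edges is bounded by $C^d$ for an absolute constant $C$ (the number of forests on $d$ edges is at most the number of forests on $2d$ vertices, which is $O((2e)^{2d} d^{-1/2} \cdot \text{poly})$ up to the Cayley-type count $n^{n-2}$ issue — I'd instead use that forests on $d$ edges are counted by a sequence with exponential generating function having positive radius of convergence, giving a genuine $C^d$ bound). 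Incorporating the $a$ marked points multiplies the count by at most $(2d)^a = \text{poly}_a(d)$, which is harmless.

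Putting this together: for $n$ sufficiently large (say $n > 2d + a$) the forests representing a basis of $\mathcal{A}^d_n$ fall into at most $\text{poly}_a(d)\cdot C^d$ orbits under $S_{n-a}$, so $\dim_{\k}\big((\mathcal{A}^d_n)^{S_{n-a}}\big) = O(C^d)$ with the implied constant depending only on $a$; for the finitely many small $n$ the dimension is a constant. By Lemma \ref{EQUIV}, $|\langle P_n, \mathcal{A}^d_n\rangle_{S_n}| \leq F_P(d)$ with $F_P(d) = O(C^d)$, hence $|a_d| = O(C^d)$, and the series $\sum_d a_d/q^d$ converges absolutely whenever $q > C$; since one checks $C$ can be taken to be (a small explicit constant, $< 3$, e.g. by a careful count of forests), this holds for all $q \geq 3$. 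The main obstacle I anticipate is getting the \emph{exponential} (rather than merely $d^{O(d)}$) bound on the number of forest-types honestly — this requires either citing a sharp asymptotic enumeration of forests by edge count, or observing that the NBC basis for $H^*(\mathcal{M}_{A_{n-1}})$ in degree $d$ is indexed more rigidly (by, e.g., sets of edges forming a forest where each edge $(i,j)$ with $i<j$ is "broken-circuit-free," a condition that cuts the count down substantially) and then bounding that count directly. The secondary subtlety is confirming the Orlik--Solomon/NBC description identifies the degree-$d$ part of $\mathcal{A}^*_n$ with forests spanning a subset of $[n]$ of size $\leq 2d$, so that the $S_n$--orbit structure is controlled purely by $d$; this is standard for the braid arrangement but should be stated carefully.
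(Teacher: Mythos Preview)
Your proposal is correct and follows essentially the same approach as the paper: reduce via Lemma~\ref{EQUIV} to bounding $\dim_\k\big((\mathcal A^d_n)^{S_{n-a}}\big)$, use Arnold's basis to identify a spanning set with labelled forests on $d$ edges, and then bound the number of $S_{n-a}$--orbits by (polynomial in $d$) $\times$ (number of unlabelled forests on $d$ edges). The ``obstacle'' you anticipate---getting a genuine $C^d$ bound on unlabelled forests with $C<3$---is exactly what the paper isolates as Lemma~\ref{LemForestEnumeration}, which cites Otter's asymptotic $t(d)\sim Cb^d d^{-5/2}$ with $b\doteq 2.9558$ to conclude $\mathcal F(d)$ is $O(2.96^d)$; with this in hand the final bound is $(2^a)\frac{(2d)!}{(2d-a)!}\cdot 3\cdot(2.96)^d$, which is $o(2.99^d)$ and gives absolute convergence for $q\geq 3$.
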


 Our approach to the proofs of Proposition \ref{PropArnoldAlg} and Theorem \ref{TheoremStabilityHyperplanes} is driven by our interest in Problem \ref{ProblemConvergenceCriteria}, and suggests the following partial solution: we can establish convergence in these cases because we can reduce the spanning sets for the graded pieces from a set of general labelled graphs to a set of decorated labelled trees. Our proof will use the following lemma on forest enumeration. 

\begin{lem}{\bf (Forest enumeration).} \label{LemForestEnumeration} Let $\mathcal F(d)$ denote the number of unlabelled unrooted forests on $d$ edges. Let $\mathcal F_*(d)$ denote the number of unlabelled forests of rooted trees with the property that roots occur only at leaves. Then $\mathcal F_*(d)$ and $\mathcal F(d)$ are order $O(2.96^d)$.
\end{lem}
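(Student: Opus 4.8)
The plan is to obtain an exponential upper bound on $\mathcal{F}(d)$ and $\mathcal{F}_*(d)$ by reducing to the enumeration of unlabelled rooted trees, whose asymptotics are classical (Otter's constant). First I would recall that the number $t_d$ of unlabelled rooted trees on $d$ edges (equivalently, $d+1$ vertices) satisfies $t_d \sim C \cdot \alpha^d \cdot d^{-3/2}$ where $\alpha = 2.9557\ldots < 2.96$ is Otter's constant; in particular $t_d = O(2.96^d)$, and the generating function $T(x) = \sum_d t_d x^d$ has radius of convergence $1/\alpha$. An unlabelled unrooted tree on $d$ edges can be encoded (non-uniquely, but at most, say, $d+1$ times over, by choosing a root) by an unlabelled rooted tree on $d$ edges, so the number $u_d$ of unlabelled unrooted trees on $d$ edges satisfies $u_d \leq t_d = O(2.96^d)$. (One could invoke Otter's precise dissimilarity formula, but a crude $(d+1)$-to-$1$ bound is more than enough since it only contributes a polynomial factor, which is absorbed by taking $2.96^d$ rather than $\alpha^d$.)

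Next I would pass from trees to forests. An unlabelled unrooted forest on $d$ edges is a multiset of unlabelled unrooted trees whose edge counts sum to $d$; equivalently, writing $U(x) = \sum_{k \geq 1} u_k x^k$ for the (unrooted, edgewise) tree generating function, the forest generating function is $F(x) = \prod_{k \geq 1} (1 - x^k)^{-u_k} = \exp\!\big(\sum_{j \geq 1} U(x^j)/j\big)$, which has the same radius of convergence $1/\alpha$ as $U$ since the $j \geq 2$ terms contribute an entire function. Hence $\mathcal{F}(d) = [x^d] F(x) = O((\alpha + \epsilon)^d)$ for every $\epsilon > 0$, and choosing $\epsilon$ so that $\alpha + \epsilon < 2.96$ gives $\mathcal{F}(d) = O(2.96^d)$. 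For $\mathcal{F}_*(d)$, the forests counted are multisets of trees each carrying a root placed at a leaf; the number of rooted-at-a-leaf versions of a given unrooted tree on $k$ edges is at most the number of leaves, which is at most $k+1$, so the "leaf-rooted tree" count $u^*_k$ satisfies $u^*_k \leq (k+1) u_k$, still $O(2.96^k)$ with the same radius of convergence. Therefore the same multiset/product argument gives $\mathcal{F}_*(d) = O(2.96^d)$.

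The main obstacle — really the only delicate point — is making sure the exponential growth rate is genuinely below $2.96$ and not merely "$O(\beta^d)$ for some $\beta$". Otter's constant $\alpha \approx 2.9557$ leaves only a small margin, so I would want to cite a reliable reference (Otter \cite{Otter1948}, or Flajolet–Sedgewick \emph{Analytic Combinatorics}) for the value, and then be careful that the reductions above cost only polynomial (not exponential) factors: the $(d+1)$-to-$1$ rooting bound, the leaf-count bound $u^*_k \leq (k+1)u_k$, and the exponential formula $F = \exp(\sum_j U(x^j)/j)$ all preserve the radius of convergence $1/\alpha$, so the coefficient growth rate stays at $\alpha$, and $\alpha < 2.96$ closes the argument. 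An alternative, if one prefers to avoid the analytic-combinatorics machinery for the exponential formula, is to bound $\mathcal{F}(d)$ and $\mathcal{F}_*(d)$ directly: a forest on $d$ edges with $c$ components and no isolated vertices has at most $d$ vertices distributed among the components, and one can crudely bound the number of such forests by $\sum_{c} \binom{d}{c} \prod t_{d_i}$-type sums, but this is messier and I would only fall back on it if a clean citation for the generating-function identity is unavailable.

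\begin{rem}
In fact the constant can be taken to be Otter's tree constant $\alpha = 2.9557\ldots$, and the polynomial corrections only affect the subexponential factor; we state $O(2.96^d)$ for convenience since any constant strictly greater than $\alpha$ suffices for the application in Proposition \ref{PropArnoldAlg} and Theorem \ref{TheoremStabilityHyperplanes}, where this bound is compared against $q^d$ with $q \geq 3 > 2.96$.
\end{rem}
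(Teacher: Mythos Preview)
Your argument is correct (with one small quibble: the $j\geq 2$ terms in $\exp\big(\sum_j U(x^j)/j\big)$ are not literally entire, only analytic in a disk strictly larger than $|x|\leq 1/\alpha$; this suffices for your purpose). The paper, however, takes a shorter and more elementary route. Rather than invoking the Euler-product/exponential formula to pass from trees to forests, it observes a direct bijection: given an unlabelled forest on $d$ edges in which every tree has a distinguished leaf, identify all the distinguished leaves to a single vertex and declare that vertex the root. This produces a rooted tree on $d+1$ vertices, and the operation is inverted by splitting the root into one copy per incident edge. Hence $\mathcal F_*(d)=r(d+1)$ \emph{exactly}, and one simply quotes the asymptotic $r(d)\sim D\,b^d d^{-3/2}$ with $b\approx 2.9557$. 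The bound on $\mathcal F(d)$ then follows since every unrooted forest can be leaf-rooted. Your generating-function approach is the standard analytic-combinatorics machinery and would generalize more readily (e.g., to other classes built as multisets), but the paper's bijection avoids any analysis of singularities and yields a clean identity rather than an inequality up to polynomial factors.
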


\begin{proof}
Otter \cite{OtterTrees} proved that the number of unlabelled trees $t(d)$ on $d$ vertices and $(d-1)$ edges is asymptotically $$ t(d) \sim Cb^d d^{-\frac52}  \qquad \text{with $C \doteq 0.534949606\ldots$ and $b \doteq 2.955765285651994974714818\ldots$}$$ The numerical values for the constants are given by Finch \cite{FinchConstants}; see Flajolet--Sedgewick \cite[VII.5 Equation (58) (p481), or Proposition VII.5 (p475) and Subsection VII.21 (p477)]{FlajoletSedgewick}. 

The number of rooted unlabelled trees $r(d)$ on $d$ vertices and $(d-1)$ edges is asymptotically \begin{align*} r(d) &\sim Db^d d^{-\frac32}  \qquad \text{with $D \doteq 0.43992$ and $b \doteq 2.95576528565\ldots$ as above} \end{align*} See Flajolet--Sedgewick \cite[Figure I.13 (p65)]{FlajoletSedgewick}.

Suppose we have an unlabelled forest on $d$ edges such that each tree has a distinguished leaf. We can identify the distinguished leaves to a single vertex to create a rooted tree. This operation is invertible; given a rooted tree we can recover the forest of rooted trees by totally disconnecting all edges incident on the root. Hence $\mathcal F_*(d)=r(d+1)$ is asymptotically given by $$ r(d+1) \sim Db^{d+1} (d+1)^{-\frac32}  < (b)b^d < 3b^d.$$
This proves the bound on $\mathcal F_*(d)$. Since there are more forests of rooted than unrooted trees, this also gives an asymptotic upper bound on $\mathcal F(d)$.
\end{proof}

\begin{proof}[Proof of Proposition \ref{PropArnoldAlg}]
By Lemma \ref{FGCHARALGSTAB}, the limit in the numerator $\lim_{n \to \infty} \langle P_n, \mathcal A^d_n \rangle_{S_n}$ exists. 
		
		To prove that  the sum $$ \sum_{d=0}\frac{ \lim_{n \to \infty} \langle P_n, \mathcal A^d_n \rangle_{S_n}}{q^d} $$ 
		converges absolutely, by Lemma \ref{EQUIV}  it suffices to show that for each $a \geq 0$ there is a function $F_a(d)$ that is independent of $n$ and has order $o(2.99^d)$ so that  $$\dim_{\k}\big((\mathcal A^d_n)^{S_{n-a}}\big) \leq F_a(d) \qquad \text{for all $n$ and $d$.} $$ 

Arnold \cite[Corollary 3]{Arnol'd} proves that an additive basis for $\mathcal A^d_n$ is given by the monomials \begin{align}\label{EquationArnoldBasis} && \a_{i_1, j_i} \wedge \a_{i_2, j_2} \wedge \cdots \wedge \a_{i_d, j_d} \qquad \text{such that} \qquad  i_k < j_k, \text{ and }   j_1 < j_2 < \cdots < j_d.\end{align} 
Other monomials are not necessarily zero, they are merely in the span of those given in Formula (\ref{EquationArnoldBasis}).  Encode each monomial by a graph with vertices labelled by numerals $1, \ldots, n$ and with an edge $(i,j)$ for each factor $\a_{i,j}$. Since the generators $\a_{i,j}$ anticommute, the graph has no multiple edges. The condition on the basis elements (\ref{EquationArnoldBasis}) that the indices increasing imply that their corresponding graphs have no cycles -- that is, each basis element corresponds to a forest. We can therefore take the set of all forests with $d$ edges and vertices labelled by $[n]$ as a (redundant) generating set for $\mathcal A^d_n$.  Recall from Lemma \ref{LemForestEnumeration} that the number of forests $\mathcal F(d)$ with $d$ edges is (quite fortuitously) asymptotically bounded by $3(2.96)^d$.

Consider a forest on $d$ edges with vertices labelled by $[n]$, and an $S_n$-action by permuting the labels. Its $S_{n-a}$-orbit, if nonzero, is encoded by a forest with some vertices assigned distinct labels in $[a]=\{1, 2, \ldots, a\}$ and other vertices unlabelled. We will disregard other additive relations between the monomials of $A^d_n$ corresponding to these forests, and counting these forests' $S_{n-a}$-orbits will  overcount the dimension of $(\mathcal  A^d_n)^{S_{n-a}}$.   The forest has $d$ edges and therefore has at most $2d$ vertices. There are  $2^a$ subsets of $[a]$ and at most $\frac{2d!}{(2d-a)!} $ ways to label vertices of the forest by a given subset. 
Hence the number of orbits of basis elements -- the dimension of $(\mathcal  A^d_n)^{S_{n-a}}$ -- is asymptotically bounded above by  $$ (2^a)\left(\frac{2d!}{(2d-a)!} \right) (3)(2.96)^d. $$ This gives the desired convergence result. 
\end{proof}

The Arnold algebra is of particular interest since, using the Grothendieck--Lefschetz formula, Proposition \ref{PropArnoldAlg} implies a convergence result for certain statistics  on squarefree polynomials in $\F_q[T]$; see Church--Ellenberg--Farb \cite[Theorem 1]{CEFPointCounting}.


\subsection{The algebra of type B/C braid arrangement is convergent} \label{SectionBCBraid}

In this section we consider the analogue of the Arnold algebra in type B/C, and prove the corresponding convergence result in Theorem \ref{TheoremStabilityHyperplanes}. In Section \ref{POLCOUNTING} we will  use this theorem to prove Theorem \ref{ASYMCOUNTING}., a convergence result for certain statistics on squarefree polynomials in $\F_q[T]$.

Let $\k$ be a subfield of $\C$.
Let $\mathcal B^*_n$ denote the  the cohomology algebra of the {\it hyperplane complement of type B/C}  
\begin{align*} \cM_{BC_n}(\C) :&= \Big\{(z_1,z_2,\ldots,z_n)\in\C^n \mid z_i\neq \pm z_j \text{ for }i\neq j; z_i\neq 0\text{ for all } i\Big\},
\end{align*}
the complex hyperplane complement associated to the hyperoctahedral group's reflecting hyperplanes. This hyperplane arrangement is also called the \emph{braid arrangement of type B/C}. As noted in \cite[Theorem 5.8]{FIW2}, $\mathcal B^*$ has the structure of finitely generated $\FI_{BC}$-algebra over $\k$ and  it is generated by the $\FI_{BC}$-module 
$$\mathcal B^1=M_{BC}(\,\Y{1}\, ,\varnothing)\oplus M_{BC}(\,\Y{2}\, ,\varnothing)\oplus M_{BC}(\varnothing,\,\Y{2}\, ).$$

Theorem \ref{NOLIMIT} suggests that finite generation by $\mathcal B^1$ alone should not be enough to ensure the desired convergence properties. Nevertheless, as in type A we will use the algebra's relations to prove Theorem \ref{TheoremStabilityHyperplanes}.


\begin{thm}{\bf (Convergence for the hyperplane arrangement of type B/C)} \label{TheoremStabilityHyperplanes} Let $\mathcal B^d_n$ denote the degree--$d$ graded piece of the cohomology algebra of the hyperplane complement $\cM_{BC_n}(\C)$ of type B/C. For $q\geq 3$ and any hyperoctahedral character polynomial $P$ , the sum   $$ \sum_{d=0}  \frac{ \lim_{n \to \infty} \langle P_n, \mathcal B^d_n \rangle_{B_n}}{q^d}$$
converges absolutely. 
\end{thm}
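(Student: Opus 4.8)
The plan is to mimic the proof of Proposition \ref{PropArnoldAlg}, replacing the Arnold algebra by $\mathcal B^*$ and the forest enumeration by a count of decorated forests. By Lemma \ref{FGCHARALGSTAB} the limits $\lim_{n\to\infty}\langle P_n,\mathcal B^d_n\rangle_{B_n}$ exist, and by Lemma \ref{EQUIV} it suffices to produce, for each $a\geq 0$, a function $F_a(d)$, independent of $n$, of order $o(q^d)$ for every $q\geq 3$ — concretely, of order $O(c^d)$ for some constant $c<3$ — bounding $\dim_{\k}\big((\mathcal B^d_n)^{B_{n-a}}\big)$ for all $n$ and $d$. So everything reduces to finding a manageable (redundant) spanning set for $\mathcal B^d_n$ that is $B_n$-equivariant and whose $B_{n-a}$-orbits are few.

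The key step is to extract from the Orlik--Solomon presentation of $\mathcal B^*$ (as in \cite{OrlikSolomon}, \cite{FIW2}) a ``no-broken-circuit''-type basis indexed by combinatorial data that look like forests. The generators of $\mathcal B^1$ are classes $\omega_i$ (from the hyperplanes $z_i=0$) and $\omega_{i,j}^{\pm}$ (from $z_i=\pm z_j$); the Orlik--Solomon relations among them are the type B/C analogues of the Arnold relations (three-term relations on any ``circuit'' of the arrangement, e.g. among $\{z_i=z_j, z_j=z_k, z_i=z_k\}$, among $\{z_i=z_j,z_j=-z_k,z_i=-z_k\}$, among $\{z_i=z_j,z_i=-z_j,z_i=0,z_j=0\}$, etc.). Choosing a linear order on the hyperplanes and invoking the standard broken-circuit theorem for hyperplane arrangements, an additive basis for $\mathcal B^d_n$ is given by monomials whose underlying collection of hyperplanes contains no broken circuit. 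I would then encode each such monomial as a graph on vertex set $[n]\cup\{0\}$ (with $0$ recording the coordinate hyperplanes $z_i=0$) with signed/decorated edges, and argue — exactly as the ``increasing indices forbid cycles'' argument in the type A proof — that the broken-circuit condition forces the underlying graph to be a forest, possibly with a bounded amount of extra decoration per edge or component (a sign on each edge $z_i=\pm z_j$, and the fact that $0$ can be incident to at most one vertex in each tree, or similar). The upshot is that $\mathcal B^d_n$ is spanned by decorated labelled forests on $d$ edges, where ``decorated'' means at most $2^d$ (or $2^{2d}$) choices of signs — a constant-per-edge factor that does not change the exponential base beyond multiplying $2.96$ by $2$, giving $O(c^d)$ with $c<6$; to get $c<3$ one must be more careful.

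The main obstacle, and the place where I expect to spend the most effort, is precisely controlling the exponential base so that it stays below $3$. Naively a sign per edge doubles Otter's constant $2.96$ to roughly $5.9$, which is too large for $q=3$. The fix should be that the relations are strong enough to remove the sign redundancy: on a \emph{tree} the three-term type B/C relations let one normalize signs (for instance, fix a root in each tree component and use the relations to rewrite any $\omega_{i,j}^-$ in terms of $\omega_{i,j}^+$ together with the coordinate classes $\omega_i$), so that the genuine spanning set is indexed by honest forests (with a bounded, $O(1)$-many, global decorations per component recording where the $z_i=0$ hyperplanes attach), not sign-decorated forests. Making this normalization precise — i.e. showing that after applying relations the surviving no-broken-circuit monomials really are in bijection (or at worst boundedly-to-one) with undecorated forests counted by $\mathcal F(d)\lesssim 3(2.96)^d$ of Lemma \ref{LemForestEnumeration} — is the crux. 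Once that is done, the $B_{n-a}$-orbit count proceeds verbatim as in Proposition \ref{PropArnoldAlg}: a forest on $d$ edges has at most $2d$ vertices (plus the special vertex $0$), there are $2^a$ subsets of $[a]$ and at most $(2d)!/(2d-a)!$ labellings by a given subset, and the $B_{n-a}$-action factors appropriately through relabelling, so $\dim_{\k}\big((\mathcal B^d_n)^{B_{n-a}}\big)$ is bounded by $C_a\, d^a\, (2.96)^d$, which is $o(q^d)$ for every $q\geq 3$. This yields absolute convergence of the series and completes the proof.
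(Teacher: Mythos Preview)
Your overall framework is correct and matches the paper's: reduce via Lemmas \ref{FGCHARALGSTAB} and \ref{EQUIV} to bounding $\dim_{\k}\big((\mathcal B^d_n)^{B_{n-a}}\big)$ by a function of order $O(c^d)$ with $c<3$, and do this by producing a forest-indexed spanning set for $\mathcal B^d_n$. You also correctly identify the main obstacle, namely that the $\pm$ sign on each edge threatens to double Otter's constant past $3$.

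The gap is in your proposed fix. You suggest using the Orlik--Solomon relations to \emph{normalize away} the signs, so that the spanning set itself is indexed by essentially undecorated forests. The paper does not do this, and there is no indication that it can be done: the three-term relations among $\alpha_{i,j},\beta_{i,j},\gamma_i$ mix the two edge-types but do not let you eliminate the $\beta$'s. Indeed the paper's Lemma \ref{LemmaSpanningTrees} produces a spanning set in which the red/blue edge coloring is still present (together with the $\gamma_i$'s, which become roots of trees, at most one per component and placed at leaves). So the exponential factor $2^d$ from colorings genuinely survives at the level of the spanning set.

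The paper's resolution of the sign problem happens not in the relations but in the \emph{group action}: the hyperoctahedral group $B_n$ is larger than $S_n$, and the transposition $(j\ \overline{j})$ acts on a forest by flipping the color of every edge incident to vertex $j$. By directing each tree away from a chosen leaf and propagating sign-flips outward along vertices with labels in $[n]\setminus[a]$, the paper shows that under the $B_{n-a}$-action one can achieve any coloring on the edges whose terminal vertex is unlabelled. Hence in the $B_{n-a}$-quotient all but at most $a$ edges become effectively uncolored, contributing only a polynomial-in-$d$ factor rather than $2^d$. This is the step you are missing: the sign redundancy is absorbed by the hyperoctahedral part of the symmetry, not by the algebra relations. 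Once you have this, the orbit count goes through as you describe, with $\mathcal F_*(d)$ from Lemma \ref{LemForestEnumeration} supplying the $(2.96)^d$ bound.
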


Before proving Theorem \ref{TheoremStabilityHyperplanes}, we collect some preliminary results. 
The structure of the cohomology ring of the complement $\cM$ of a finite arrangement of complex hyperplanes containing the origin was studied by Brieskorn \cite{Brieskorn} and Orlik--Solomon \cite{OrlikSolomon}.

\begin{thm}(\cite[Theorem 5.2]{OrlikSolomon}; see also \cite[Theorem 2.3 \& Equation (2.2)]{LehrerSolomon}) \label{TheoremOrlikSolomon} 
Let $\cM$ be the complement of a finite complex hyperplane arrangement  $\mathfrak A$. 
Define a set of hyperplanes $H_1, \ldots , H_p$ to be \emph{dependent} if $$ \codim(H_1 \cap \cdots \cap H_p) < p.$$  Then the cohomology ring $H^*(\cM; \C)$ is isomorphic as a graded algebra to the quotient of the exterior algebra $$H^*(\cM; \C) \cong \frac{\bigwedge \langle e_H \; \vert \; H \in \mathfrak A \rangle }{ \langle \quad \sum^p_{\ell=1 } (-1)^\ell \; e_{H_1} \cdots \widehat{e_{H_{\ell}}} \cdots e_{H_p} \quad \vert \quad H_1, \ldots, H_p \text{ dependent} \quad  \rangle }.  $$ 
In particular, any product of dependent hyperplanes $e_{H_1} e_{H_2} \cdots e_{H_p}$ vanishes in the quotient. 
\end{thm}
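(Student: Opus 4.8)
\textbf{Plan for proving Theorem \ref{TheoremOrlikSolomon}.}
The plan is to follow the classical argument of Brieskorn \cite{Brieskorn} and Orlik--Solomon \cite{OrlikSolomon}, which I will sketch in three stages: (1) identify a spanning set for $H^*(\cM;\C)$ coming from logarithmic differential forms, (2) identify the relations among these forms, and (3) check that the resulting presentation is exactly the Orlik--Solomon algebra displayed in the statement.

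\emph{Stage 1: logarithmic forms span.} For each hyperplane $H \in \mathfrak A$ with defining linear form $\alpha_H$, set $\omega_H := \frac{1}{2\pi i}\,\frac{d\alpha_H}{\alpha_H}$, a closed holomorphic $1$-form on $\cM$. First I would show that the subalgebra of $H^*(\cM;\C)$ generated by the classes $[\omega_H]$ is all of $H^*(\cM;\C)$. The classical route here is Brieskorn's theorem: induct on the number of hyperplanes (or on $\dim$), using the Mayer--Vietoris / Leray spectral sequence comparison obtained by deleting one hyperplane $H_0$, so that $\cM = \cM' \setminus H_0$ with $\cM'$ the complement of the smaller arrangement; the key input is that the restriction map together with the residue along $H_0$ splits the relevant sequence, and that $H^*$ of a complement of a linear arrangement in $\C^1$ (i.e.\ $\C$ minus finitely many points) is generated in degree $1$ by the $\omega_H$'s. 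This gives surjectivity of the Brieskorn map $\bigwedge\langle e_H\rangle \twoheadrightarrow H^*(\cM;\C)$, $e_H \mapsto [\omega_H]$.

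\emph{Stage 2: the relations.} Next I would show that if $H_1,\dots,H_p$ are dependent --- meaning $\codim(H_1\cap\cdots\cap H_p) < p$ --- then in $\Omega^*(\cM)$ the form $\sum_{\ell=1}^p (-1)^\ell\, \omega_{H_1}\cdots\widehat{\omega_{H_\ell}}\cdots\omega_{H_p}$ is exact (indeed zero on the nose, or an exact form). The computational heart is the identity that a nontrivial linear dependence $\sum_\ell c_\ell \alpha_{H_\ell} = 0$ among the defining forms forces $\omega_{H_1}\wedge\cdots\wedge\omega_{H_p} = 0$ and, more generally, yields the displayed alternating relation; this is a direct manipulation of $\frac{d\alpha}{\alpha}$ using $d(\sum c_\ell \alpha_{H_\ell}) = 0$. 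So the Brieskorn map factors through the Orlik--Solomon quotient, giving a surjection from the OS algebra onto $H^*(\cM;\C)$. In particular any product $e_{H_1}\cdots e_{H_p}$ over a dependent set lies in the ideal, which is the last sentence of the statement (take the length-$p$ relation together with the length-$(p{-}1)$ sub-relations, or argue directly that a circuit contains a minimal dependent subset and the associated relation expresses the product in terms of others of the same degree, eventually hitting zero).

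\emph{Stage 3: injectivity / dimension count.} Finally I would show the surjection is an isomorphism by a dimension comparison. One shows the OS algebra has a basis indexed by the ``no-broken-circuit'' (nbc) monomials --- fix a linear order on $\mathfrak A$, call a subset a broken circuit if it is a minimal dependent set with its least element removed, and take monomials $e_{H_1}\cdots e_{H_d}$ containing no broken circuit --- and that the number of nbc sets of size $d$ equals $\dim_\C H^d(\cM;\C)$. The latter equality follows from Orlik--Solomon's computation that the Poincar\'e polynomial of $\cM$ is $\sum_{X\in L(\mathfrak A)} \mu(X)(-t)^{\mathrm{codim}\,X}$ together with Whitney's theorem expressing this M\"obius sum via nbc sets; alternatively one can cite that the $\omega_H$ realize the mixed Hodge / weight decomposition and count directly. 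The main obstacle is this last step: proving the nbc monomials are linearly independent in the OS algebra (equivalently that they span a space of the right dimension) requires either the deletion--restriction long exact sequence argument run carefully by induction on $|\mathfrak A|$, matching $\dim H^d$ on both sides, or an explicit combinatorial basis argument. Since the theorem is quoted from \cite{OrlikSolomon} (and \cite{LehrerSolomon}), in the paper I would simply cite these references for Stage 3 rather than reproduce the full combinatorial argument, and present Stages 1--2 only to the extent needed to make the statement self-contained.
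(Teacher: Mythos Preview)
Your sketch is a faithful outline of the classical Brieskorn--Orlik--Solomon argument, and I see no genuine gap in it as a plan. However, there is nothing to compare it against: the paper does not prove this theorem. Theorem~\ref{TheoremOrlikSolomon} is stated purely as a citation from \cite[Theorem 5.2]{OrlikSolomon} (with a pointer to \cite{LehrerSolomon}) and is used as a black box in the subsequent analysis of the type~B/C arrangement. The paper's own contribution begins only after this statement, where the Orlik--Solomon presentation is applied to extract the specific relations~(\ref{EqAB}) and~(\ref{EqGamma}) for $\mathcal B^*_n$.

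So your proposal is not wrong, but it goes well beyond what the paper does. If you were writing this section of the paper, the appropriate move is exactly what you say at the end of Stage~3: cite \cite{OrlikSolomon} and move on.
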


\subsubsection*{The algebra $\mathcal B^*_n$} By Theorem \ref{TheoremOrlikSolomon}, the cohomology ring $\mathcal B^*_n$ has algebra generators  in bijection with the reflecting hyperplanes of the hyperoctahedral group $B_n$.  Let $\a_{i,j}$, $\b_{i,j}$, and $\g_i$ denote the degree--1 generators corresponding to the hyperplanes $\langle z_i - z_j  \rangle^{\perp}$, $\langle z_i + z_j  \rangle^{\perp}$, and $\langle z_i   \rangle^{\perp}$, respectively, for all $i \neq j$, $i,j \in [n]$.  Observe that $\a_{i,j}=\a_{j,i}$ and $\b_{i,j}=\b_{j,i}$. 

A collection of hyperplanes is dependent precisely when their normal vectors form a linearly dependent set. Observe that the following sets of hyperplanes are dependent in the sense of Theorem \ref{TheoremOrlikSolomon} : \begin{align*} 
\{ \a_{i,j}, \a_{j,k}, \a_{k,i} \},  \{\a_{i,j}, \b_{j,k}, \b_{k,i} \},  \qquad \text{ for distinct } i,j,k \in [n] \\
\{\a_{i,j}, \b_{i,j}, \gamma_i \} , \{\a_{i,j}, \gamma_j, \gamma_i \}, \{ \b_{i,j}, \gamma_j, \gamma_i \}    \qquad \text{ for distinct } i,j \in [n]. 
\end{align*} 
Notably, the sets $\{ \b_{i,j}, \b_{j,k}, \b_{k,i}\}$ and $\{ \a_{i,j}, \a_{j,k}, \b_{k,i}\} $ are not dependent. 
The sets of dependent hyperplanes give the relations 
{\small
\begin{align}
\a_{i,j}\a_{j,k} + \a_{j,k}\a_{k,i} + \a_{k,i}\a_{i,j}, && \a_{i,j}\b_{j,k} + \b_{j,k}\b_{k,i} + \b_{k,i}\a_{i,j},  
\label{EqAB} \\
  \b_{i,j} \gamma_i +\gamma_i\a_{i,j}  + \a_{i,j} \b_{i,j} && \a_{i,j} \gamma_j + \gamma_i\a_{i,j}  +  \gamma_j \gamma_i &&  \b_{i,j} \gamma_j + \gamma_i\b_{i,j}  +  \gamma_j \gamma_i \label{EqGamma}.
\end{align}
}
\begin{lem} \label{LemmaIncreasingIndices} {\bf (Generators for $\mathcal B^*_n$).}
Any word in the elements $\a_{i,j}$ and $\b_{k, \ell}$ is in the span of the words of the form 
$$ \theta_{i_1, j_1}\theta_{i_2, j_2}\theta_{i_3, j_3}\cdots \theta_{i_t, j_t} \g_{i_{t+1}} \cdots \gamma_{i_r}, \qquad \theta_{i_k, j_k} \in \{ \a_{i_k,j_k}, \b_{i_k,j_k} \} ,\qquad j_k < j_{k+1} \text{ and } i_k<j_k.$$
\end{lem}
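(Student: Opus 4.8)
The plan is to mimic Arnold's straightening argument for the braid algebra, using the relations (\ref{EqAB}) and (\ref{EqGamma}) to reduce an arbitrary word to the claimed normal form. First I would observe that the $\gamma_i$ can always be moved to the right-hand end of any word: since the generators anticommute in the exterior algebra and no relation is needed for this, a word in the $\alpha$'s, $\beta$'s, and $\gamma$'s can be rewritten (up to sign) so that all $\gamma$-factors appear last; moreover if any $\gamma_i$ is repeated the monomial is zero, and likewise any repeated $\alpha_{i,j}$ or $\beta_{i,j}$. So it suffices to straighten a word $\theta_{i_1,j_1}\cdots\theta_{i_t,j_t}$ in the $\alpha$'s and $\beta$'s (with $i_k<j_k$, which we may always arrange since $\alpha_{i,j}=\alpha_{j,i}$ and $\beta_{i,j}=\beta_{j,i}$), into a span of such words with $j_1<j_2<\cdots<j_t$.

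The core of the argument is a rewriting/induction on the multiset of ``larger indices'' $\{j_1,\dots,j_t\}$ ordered, say, lexicographically from the right, exactly as in Arnold \cite[Corollary 3]{Arnol'd}. The key algebraic input is that the relations (\ref{EqAB}) let us express a product $\theta_{a,b}\theta_{b,c}$ — where $b$ is the common index and, say, $b>a,c$ — in terms of products in which the repeated/larger index $b$ is replaced by products using the smaller indices: from $\alpha_{i,j}\alpha_{j,k}=-\alpha_{j,k}\alpha_{k,i}-\alpha_{k,i}\alpha_{i,j}$ and the second relation in (\ref{EqAB}), any two factors sharing an index can be rewritten at the cost of introducing factors whose index sets are ``smaller'' in the chosen order. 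Thus whenever a word has two factors with a common index, or has its $j_k$'s out of increasing order, one application of a relation strictly decreases the relevant complexity measure; since that measure is bounded below, the process terminates, and the only words it cannot be applied to are precisely those of the stated form. The one subtlety to check is that the relations in (\ref{EqAB}) genuinely suffice here — in particular that we never need the *non*-relations $\{\beta_{i,j},\beta_{j,k},\beta_{k,i}\}$ or $\{\alpha_{i,j},\alpha_{j,k},\beta_{k,i}\}$; the point is that the two valid relations in (\ref{EqAB}) already cover every pattern of a ``$\theta\theta$ pair sharing an index'' once one uses $\alpha_{i,j}=\alpha_{j,i}$, $\beta_{i,j}=\beta_{j,i}$ to normalize which slot the common index sits in.

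I expect the main obstacle to be bookkeeping: making the complexity measure on words precise enough that a single relation application is provably decreasing, while simultaneously handling the two flavors of generator ($\alpha$ vs.\ $\beta$) and the symmetrization $\theta_{i,j}=\theta_{j,i}$. A clean way to package this is to set up a term-rewriting system on formal words, show each rewrite rule (coming from (\ref{EqAB})) strictly lowers a well-ordered statistic — e.g.\ the sorted tuple $(j_{(1)}\geq j_{(2)}\geq\cdots)$ of larger indices compared lexicographically, with ties broken by the number of ``inversions'' among the $j_k$ — and invoke Newman's lemma or simply well-foundedness to conclude that every word lies in the $\k$-span of its irreducible descendants, which are exactly the words of the asserted form. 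The $\gamma$-handling is comparatively easy and can be done as a preliminary normalization step, using only anticommutativity and the fact that $\gamma_i^2=0$.
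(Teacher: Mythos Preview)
Your overall plan---normalize so that each $\theta_{i,j}$ has $i<j$, sort by the larger index, and then use the three-term relations to eliminate repeats among the larger indices---is the same as the paper's. But there is a genuine gap in your claim that ``the two valid relations in (\ref{EqAB}) already cover every pattern of a $\theta\theta$ pair sharing an index.'' They do not: if the two adjacent factors are $\alpha_{i,j}$ and $\beta_{i,j}$, i.e.\ they share \emph{both} indices, then (\ref{EqAB}) gives nothing (those relations require three distinct indices $i,j,k$). In this case one must invoke the first relation of (\ref{EqGamma}), which rewrites
\[
\alpha_{i,j}\beta_{i,j} \;=\; \alpha_{i,j}\gamma_i \;-\; \beta_{i,j}\gamma_i,
\]
and this is exactly how the $\gamma$'s enter the normal form. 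Note that the hypothesis of the lemma is a word in the $\alpha$'s and $\beta$'s only---there are no $\gamma$'s to ``move to the right'' at the outset---so your treatment of the $\gamma$'s as a harmless preliminary step misreads the statement. The $\gamma$'s are \emph{produced} by the straightening, not present in the input.

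This also means your proposed induction has to be organized a bit differently: after a (\ref{EqGamma})-substitution you have one fewer $\theta$-factor and one new $\gamma$-factor, which you can push to the right by anticommutativity; but subsequent applications of (\ref{EqAB}) on the remaining $\theta$-word may again create an $\alpha_{a,b}\beta_{a,b}$ pair, forcing another use of (\ref{EqGamma}). A workable complexity measure is the multiset of larger indices of the $\theta$-factors, compared lexicographically from the top: an (\ref{EqAB})-move replaces $\{\ldots,k,k,\ldots\}$ by $\{\ldots,j,k,\ldots\}$ with $j<k$, and a (\ref{EqGamma})-move replaces $\{\ldots,j,j,\ldots\}$ by $\{\ldots,j,\ldots\}$; both strictly decrease the measure, so the process terminates. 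Once you add this missing case and adjust the induction accordingly, your argument matches the paper's.
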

\begin{proof}
To see this, take any word in $\a_{i,j}$ and $\b_{k, \ell}$. Since $\a_{i,j}=\a_{j,i}$ and $\b_{k, \ell} = \b_{k, \ell}$, we may assume the indices are increasing -- and (up to sign) we may arrange the factors in the word so that second indices are nondecreasing. The goal is to express the word as a linear combination of words with second indices strictly increasing. To accomplish this, we may use the relations in Equation (\ref{EqAB}) inductively replace each factor of the form $\theta_{i,k} \theta_{j,k}$ (with $i<j<k$) by the appropriate expression  \begin{align*}
\a_{i,k}\a_{j,k} =  \a_{i,j}\a_{j,k} - \a_{i,j}\a_{i,k} &&  \a_{i,k}\b_{j,k} =  \b_{i,j}\b_{j,k} - \b_{i,j}\a_{i,k}  \\ 
\b_{i,k}\a_{j,k} =  \b_{i,j}\a_{j,k} - \b_{i,j}\b_{i,k} &&  \b_{i,k}\b_{j,k} =  \a_{i,j}\b_{j,k} - \a_{i,j}\b_{i,k}. \end{align*}
and using the relations in Equation (\ref{EqGamma}) to replace any factor of the form $\alpha_{i,j} \beta_{i,j}$ by 
$$  \a_{i,j} \b_{i,j} =  \a_{i,j}\gamma_i - \b_{i,j}\gamma_i .  \qedhere$$
\end{proof}

\begin{lem} \label{LemmaSpanningTrees} {\bf (Spanning forests for $\mathcal B^d_n$).}
The degree--$d$ graded piece $\mathcal B^d_n$ has a linear spanning set in bijection with decorated forests characterized by the following properties:
\begin{itemize}
\item trees may be rooted or unrooted
\item if a tree is rooted then the root occurs at a leaf
\item nodes are labelled by digits in $[n]$
\item edges are coloured red or blue
\item the number of edges and the number of rooted trees sum to $d$.
\end{itemize}
\end{lem}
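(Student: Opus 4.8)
The plan is to upgrade Lemma \ref{LemmaIncreasingIndices} from a statement about words in the $\a_{i,j}$ and $\b_{k,\ell}$ to one about arbitrary monomials in all the degree--$1$ generators (including the $\g_i$), and then translate the resulting normal form into the graph-theoretic language already used in the proof of Proposition \ref{PropArnoldAlg}. First I would fix a monomial $m$ in the generators $\a_{i,j}, \b_{i,j}, \g_i$ of graded-degree $d$. Using the relations in Equation (\ref{EqGamma}) (specifically $\g_j\g_i = \a_{i,j}\g_j + \g_i\a_{i,j}$ and $\g_j\g_i = \b_{i,j}\g_j + \g_i\b_{i,j}$) I would first reduce the number of $\g$ factors: any product $\g_i\g_j$ can be rewritten as a sum of products each containing at most one $\g$. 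Iterating, I reduce to the case where $m$ has at most one $\g$ factor, so $m = w$ or $m = w\g_i$ where $w$ is a word purely in the $\a$'s and $\b$'s. Then Lemma \ref{LemmaIncreasingIndices} puts $w$ into the claimed increasing-index form $\theta_{i_1,j_1}\cdots\theta_{i_t,j_t}\g_{i_{t+1}}\cdots\g_{i_r}$; combining with the leftover $\g_i$ (if any) and reapplying the $\g\g$-reduction keeps us with at most one $\g$, so in fact the spanning set consists of words $\theta_{i_1,j_1}\cdots\theta_{i_t,j_t}$ or $\theta_{i_1,j_1}\cdots\theta_{i_t,j_t}\g_{i_{t+1}}$ with second indices of the $\theta$'s strictly increasing and $i_k < j_k$.

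Next I would encode each such spanning monomial as a decorated graph on vertex set $[n]$: for each factor $\a_{i,j}$ place a red edge $\{i,j\}$, for each $\b_{i,j}$ a blue edge, and for a factor $\g_i$ mark vertex $i$ as a root. Since the $\a$'s and $\b$'s anticommute and the increasing-second-index condition exactly mirrors the condition in Formula (\ref{EquationArnoldBasis}), the same argument as in Proposition \ref{PropArnoldAlg} shows the underlying (uncoloured) graph is acyclic, i.e. a forest. The one genuinely new point is handling the roots: a factor $\g_i$ that shares its index with an endpoint of some edge needs to be analyzed. Here I would invoke the relations $\{\a_{i,j},\b_{i,j},\g_i\}$, $\{\a_{i,j},\g_j,\g_i\}$, $\{\b_{i,j},\g_j,\g_i\}$ of (\ref{EqGamma}): a term $\a_{i,j}\g_i$ already appears in the normal form, but one checks that the relevant reductions can always be performed so that the marked (root) vertex is a \emph{leaf} of its tree component — if $\g_i$ sits on an internal vertex, the corresponding monomial can be rewritten, using an Arnold-type relation at $i$, as a combination of monomials where the root has been pushed to a pendant vertex or the edge structure simplified. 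This is the step I expect to be the main obstacle: verifying that the rewriting terminates and genuinely forces roots onto leaves, rather than merely reducing the number of edges, will require a careful induction (presumably on the number of edges incident to a marked vertex, or on a lexicographic order on the indices). Once that is established, the ``number of edges plus number of rooted trees equals $d$'' bookkeeping is immediate, since each edge contributes one $\theta$ factor of graded-degree $1$ and each root contributes one $\g$ factor of graded-degree $1$.

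Finally, to make the statement precise I would record that ``spanning set in bijection with decorated forests'' means: the set of all such decorated forests on vertices labelled by $[n]$ (not just those arising from a single normal-form reduction) linearly spans $\mathcal B^d_n$; as in the type A case this set is redundant, and we do not claim a basis. The role of this lemma in the sequel is purely to bound $\dim_\k\big((\mathcal B^d_n)^{B_{n-a}}\big)$: the $B_n$-orbit of a decorated forest is governed by the underlying decorated \emph{unlabelled} forest together with a partial labelling by $[a]$, and Lemma \ref{LemForestEnumeration} bounds the number of unlabelled forests on $d$ edges by $O(2.96^d)$ — here the colours contribute only a factor of $2^d$ and the optional roots another bounded factor, so the count stays $O(c^d)$ for a constant $c<3 \leq q$, which is exactly what Lemma \ref{EQUIV} needs to prove Theorem \ref{TheoremStabilityHyperplanes}. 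I would therefore state and prove Lemma \ref{LemmaSpanningTrees} in just enough generality to feed directly into that counting argument, mirroring the structure of the proof of Proposition \ref{PropArnoldAlg} as closely as possible.
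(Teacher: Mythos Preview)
Your approach diverges from the paper's at the crucial step, and the divergence introduces a real gap.

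You try to eliminate $\gamma$'s \emph{first}, reducing every monomial to one with at most a single $\gamma$ factor, and only then apply Lemma~\ref{LemmaIncreasingIndices} to normalise the $\theta$-part. The problem is that Lemma~\ref{LemmaIncreasingIndices} itself can reintroduce $\gamma$'s: its proof uses the relation $\a_{i,j}\b_{i,j} = \a_{i,j}\g_i - \b_{i,j}\g_i$ from Equation~(\ref{EqGamma}), so its output is a word $\theta_{i_1,j_1}\cdots\theta_{i_t,j_t}\g_{i_{t+1}}\cdots\g_{i_r}$ with possibly many $\gamma$'s. You acknowledge this and propose to ``reapply the $\gamma\gamma$-reduction,'' but each such reduction $\g_i\g_j \to \pm\a_{i,j}\g_j \pm \g_i\a_{i,j}$ inserts a new $\a_{i,j}$ into the $\theta$-word, potentially destroying the increasing-index normal form and creating new $\a_{i,j}\b_{i,j}$ pairs. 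You give no invariant that decreases, so the termination of this back-and-forth is unjustified. Moreover, even if it did terminate, your spanning set (forests with at most one rooted tree total) is strictly smaller than the set the lemma describes (forests with at most one root \emph{per component}); you would be proving a different, stronger statement.

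The paper avoids this loop entirely by running the reductions in the opposite order. It applies Lemma~\ref{LemmaIncreasingIndices} once, obtaining a word whose $\theta$-part encodes a forest and whose $\gamma$-part marks some set of vertices. The key observation you are missing is then a direct appeal to the Orlik--Solomon relations (Theorem~\ref{TheoremOrlikSolomon}): if two marked vertices $i_1,i_t$ lie in the same tree, the path $\g_{i_1}\theta_{i_1,i_2}\cdots\theta_{i_{t-1},i_t}\g_{i_t}$ involves $t+1$ hyperplanes whose normal vectors all lie in the $t$-dimensional subspace $\langle z_{i_1},\ldots,z_{i_t}\rangle$, hence form a dependent set, and the monomial is zero. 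This immediately gives ``at most one root per component'' with no rewriting at all. The root-to-leaf step is then exactly the induction you sketch: the relation $\g_i\a_{i,j} = \a_{i,j}\g_j + \g_i\g_j$ either slides the root across one edge or deletes that edge and creates two rooted subtrees, each with strictly fewer edges, so the process terminates.

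One further remark on your closing paragraph: the estimate ``colours contribute only a factor of $2^d$'' would give a bound of order $(2\cdot 2.96)^d \approx 5.92^d$, which is useless for $q\ge 3$. The paper does \emph{not} absorb the colours this way; in the proof of Theorem~\ref{TheoremStabilityHyperplanes} it uses the $B_{n-a}$-action (sign flips at unlabelled vertices) to uncolour all but at most $a$ edges, so the colour contribution is $2^a$, a constant independent of $d$.
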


\begin{proof}
Consider a word in  $\mathcal B^d_n$. By Lemma \ref{LemmaIncreasingIndices}, we can assume that the subword on the generators $\theta_{i_k, j_k} \in \{ \a_{i_k,j_k}, \b_{i_k,j_k} \}$ satisfies $j_k < j_{k+1}$ and $ i_k<j_k$. Identify this word with a labelled graph by assigning a red edge $(i,j)$ for each factor $\a_{i_k,j_k}$ and a blue edge $(k, \ell)$ for each factor $\b_{k, \ell}$. The condition on the indices guarantees that these graph contains no cycles; this graph is a forest. 

Mark a distinguished vertex $i$ for each factor $\gamma_i$. We claim that there can be at most one distinguished vertex for each connected component of the graph. Suppose otherwise, then the tree must contain a path as in  Figure 1 corresponding to factors $$\g_{i_1} \theta_{i_1, i_2} \theta_{i_2, i_3} \cdots \theta_{i_{t-1}, i_t} \gamma_{i_t}.$$
 
\begin{figure}[h]\label{SelfLoop}
\begin{tikzpicture}[auto, node distance=3cm, every loop/.style={},
                    thick,main node/.style={circle,draw,font=\sffamily\Large\bfseries}]

  \node[main node] (1) {$i_1$};
  \node[main node] (2) [right of=1] {$i_2$};
  \node[main node] (3) [ right of=2] {$i_3$};
  \node[main node] (4) [right of=3] {$i_{t-1}$};
  \node[main node] (5) [right of=4] {$i_{t}$};

  \path[every node/.style={font=\sffamily\small}]
    (1) edge (2)
    (2) edge (3)
(3) edge [dashed] (4)
    (4) edge (5)
     (1) edge [loop left] (1)
(5) edge [loop right] (5) ;
\end{tikzpicture}
\caption{A path corresponding to factors $\g_{i_1} \theta_{i_1, i_2} \theta_{i_2, i_3} \cdots \theta_{i_{t-1}, i_t} \gamma_{i_t}.$  The self-loops represent the $ \gamma_i$ factors.}
\end{figure}
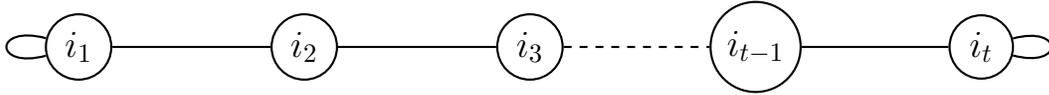

There are $t+1$ corresponding hyperplanes, with normal vectors contained in the $t$-dimensional vector subspace $\langle z_{i_1}, z_{i_2}, \ldots, z_{i_t} \rangle$. These normal vectors must be linearly dependent, and so by Theorem \ref{TheoremOrlikSolomon} the word is equal to zero. It follows that each tree contains at most one distinguished vertex, and we can consider these trees to be rooted. 

It remains to show that we need only those forests where the roots occur at leaves. If we have a factor $\gamma_i$ corresponding to an internal vertex $i$, we can use the relations of Equation (\ref{EqGamma}) 
$$  \gamma_i\a_{i,j} = \a_{i,j} \gamma_j  +  \gamma_i \gamma_j \qquad \text{or} \qquad \gamma_i\b_{i,j} = \gamma_j\b_{i,j}  +  \gamma_i \gamma_j$$
to write our forest as the sum of two forests. The forest associated to the word containing $\gamma_i \gamma_j$ has had the edge $(i,j)$ deleted, and both the vertices $i$ and $j$ are roots of two new trees. In the second forest, the root moved from vertex $i$ across the edge $(i,j)$ to vertex $j$. Iterating this operation, we can 'move' the root from an internal vertex to a leaf of our original forest, and then repeat on the operation on the new set of forests created associated to the terms $\gamma_i \gamma_j$. Each such new forest has one fewer edges than its predecessor, and so this procedure must terminate.  
This concludes the proof. 
\end{proof}

\begin{proof}[Proof of Theorem \ref{TheoremStabilityHyperplanes}]
The graded pieces $\mathcal B^d_n$ of the cohomology ring are finitely generated $\FI_{BC}$--modules \cite[Theorem 5.8]{FIW2}, and so the limit $\lim_{n \to \infty} \langle P_n, \mathcal B^d_n \rangle_{B_n}$ exists by Lemma \ref{FGCHARALGSTAB}. To prove that  the sum $$ \sum_{d=0} \frac{\lim_{n \to \infty} \langle P_n, \mathcal B^d_n \rangle_{B_n}}{q^d} $$ 
		converges absolutely, by Lemma \ref{EQUIV}, it suffices to find a function $F_a(d)$ for each $a \geq 0$ such that $$\dim_{\k}\big((\mathcal B^d_n)^{B_{n-a}}\big) \leq F_a(d) \qquad \text{for all $n$ and $d$.} $$ and such that $F_a(d)$ is independent of $n$ and has order $o(2.99^d)$. By Lemma \ref{LemmaSpanningTrees}, it suffices to show that for each $a$, the number of $B_{n-a}$--orbits of forests described in the Lemma has order $o(2.99^d)$. 

Consider the action of $B_n$ on a forest described in Lemma \ref{LemmaSpanningTrees}. The action of the symmetric group $S_n \subseteq B_n$ permutes the labels $1, 2, \ldots, n$ on the vertices. The action of the transposition $(j \; \overline{j})$ on the vertex $j$ will simultaneously turn all red edges incident to vertex $j$ to blue, and all blue edges to red. As in the proof of Proposition \ref{PropArnoldAlg}, we will disregard any additive relations between the monomials associated to these forests. 

Fix $a$. Vertices $1, 2, \ldots, a$ are labelled, and under the action of $S_{n-a} \subseteq B_{n-a}$ we may consider the remaining vertices in the $B_{n-a}$ quotient to be unlabelled. We claim that it is also possible to consider all but at most $a$ of the edges to be uncoloured.  To see this, first choose a distinguished leaf $v_*$ on each tree, and give each tree a directed graph structure by directing edges away from this leaf. Let $v_e$ denote the terminal vertex of an edge $e$. At most $a$ of the terminal vertices $v_e$ are labelled by a letter in $[a]$. We will show that, under the $B_{n-a}$ action, all colourings are possible for the set of edges $e$ with vertex $v_e$ labelled in $[n]\setminus [a]$, and so we may consider these edges uncoloured in the $B_{n-a}$ quotient.  To obtain a given colouring of the edges labelled in $[n]\setminus [a]$, proceed as follows: first, if the vertex at distance $1$ from $v_*$ is labelled in $[n]\setminus [a]$, act on it by identity or negation to give the edge $e$ the desired colour. Then act on all vertices at distance $2$ from $v_*$ with labels in $[n]\setminus [a]$  to give the associated edges the desired colours. At step $n$, act on all vertices labelled in $[n]\setminus [a]$ at distance $n$ from $v_*$. By construction, the action at step $n$ will not affect any of the earlier colourations, so this procedure will yield the desired colouring. Since all colourings are possible, in the $B_{n-a}$ quotient we may consider these edges to be uncoloured.

We can conclude that $\dim_{\k} ( \mathcal B_n^d)^{B_{n-a}}$ is strictly less than the number of forests of rooted trees on at most $d$ edges with some vertices given distinct labels from a subset of $[a]$ and up to $a$ edges coloured red or blue. Each forest has at most $2d$ vertices, so there are $2^a$ subsets of $[a]$ and at most $\frac{ (2d)!}{(2d-a)!}$ ways to apply these labels.
There are $\sum_{i=0}^a {d \choose i}$ subsets of the $d$ edges of size at most $a$; for $d$ large relative to $a$, $ {d \choose i} \leq {d \choose a}$ so $\sum_{i=0}^a {d \choose i} \leq \sum_{i=0}^a {d \choose a} \leq (a+1) {d \choose a}$. For each subset of edges, there are at most $2^a$ ways to colour these edges either red or blue.

Let $\mathcal F_*(d)$ denote the number of forests where each tree has a distinguished leaf. Putting this together we have, for $d$ large,

\begin{align*} \dim_{\k} ( \mathcal B_n^d)^{B_{n-a}} &\leq  \sum_{i=0}^d \mathcal F_*(i) 2^a\frac{ (2d)!}{(2d-a)!} (a+1) {d \choose a} 2^a \\
&\leq d \mathcal F_*(d) 4^a (a+1) \frac{ (2d)!}{(2d-a)!}  {d \choose a} 
\end{align*}
As a function of $d$, this is $\mathcal F_*(d)$ multiplied by a degree--$(2a+1)$ polynomial in $d$. By Lemma \ref{LemForestEnumeration}, $\mathcal F_*(d)$ is bounded above by  $3(2.96)^d$, and so asymptotically $ \dim_{\k} ( \mathcal B_n^d)^{B_{n-a}} $ is bounded above by
$$ 4^a (a+1) (d) \frac{ (2d)!}{(2d-a)!}   {d \choose a} (3)(2.96)^d, $$
as desired.
\end{proof}

\begin{rem}{\bf (The Church--Ellenberg--Farb approach to Theorem \ref{TheoremStabilityHyperplanes}).}  Theorem \ref{TheoremStabilityHyperplanes} could also be proved using the explicit decomposition of each graded piece $\mathcal B^d_n$ as a $B_n$--representation due to  Douglass (see Theorem \ref{DouglassDecomp} below), and generalizing the argument of Church--Ellenberg--Farb \cite[Proposition 4.2]{CEFPointCounting}. Our approach, however, is more elementary, and highlights the combinatorial features of the algebraic relations in the algebra $\mathcal B^*_n$ that drive this convergence result. 
\end{rem}

\begin{rem} {\bf (Casto's improved bounds).}
In more recent work, Casto \cite[Theorem 3.3]{CASTO} showed that the dimension of the invariant subspaces $\dim_\k((\mathcal  A^d_n)^{S_{n-a}})$ and $\dim_\k((\mathcal  B^d_n)^{S_{n-a}})$ are in fact bounded above by a polynomial in $d$, independently of $n$. His methods apply to a more general setting and are based on a spectral sequence argument.
\end{rem}

\section {Statistics on squarefree polynomials and hyperplane arrangements of type $B/C$}\label{POLCOUNTING}

In this section we use results of Grothendieck, Lehrer, and Kim to describe  the relationship between (on the topological side) the complex cohomology of the hyperplane complement ${\cM_{BC}}_n(\C)$, and (on the combinatorial side) statistics on the set $\cY_n(\F_q)$ of monic squarefree degree-$n$ polynomials in $\F_q[T]$ with nonzero constant term. 

We describe how these statistics capture the distribution of irreducible degree-$r$ factors of  polynomials in $\cY_n(\F_q)$, and the nature of the square roots of their zeroes. By  combining this  formula with  Theorem \ref{TheoremStabilityHyperplanes}, we obtain Theorem \ref{ASYMCOUNTING}, our asymptotic result  about statistics for $\cY_n(\F_q)$. Theorem \ref{ASYMCOUNTING} is the type B/C analogue of Church--Ellenberg--Farb \cite[Theorem 1]{CEFPointCounting}.


\subsection{Hyperplane arrangements of type $B/C$}\label{MBCSchemes}

Consider the canonical action of the hyperoctahedral group $B_n$ on $\C^n$ by signed permutation matrices. Let $\cA_n(\C)$  denote the set of complex codimension one hyperplanes in $\C^n$ fixed by a (complexified) reflection of $B_n$. These hyperplanes are defined by the equations 
\begin{align} \label{HyperplaneEqns}  && z_i-z_j=0 \quad \text{ for all $i \neq j$}, \qquad z_i+z_j=0 \quad \text{ for all $i\neq j$ }, \qquad z_i=0 \quad \text{ for all $i$ }  \end{align} 
with $i,j \in [n]$.    The hyperplane complement of type $B/C$  is the complex manifold given by
$${\cM_{BC}}_n(\C) = \C^n\setminus\bigcup_{H\in\cA_n(\C)} H
= \Big\{(z_1,z_2,\ldots,z_n)\in\C^n: z_i\neq \pm z_j \text{ for }i\neq j; z_i\neq 0\text{ for all } i\Big\}.$$

Since the linear forms that appearing in  (\ref{HyperplaneEqns}) are defined over $\Z$, the complement ${\cM_{BC}}_n(\C)$ can be thought as the set of complex points of a scheme $\cM_n$ over $\Z$. 
More precisely, $$\cM_n=\mathbb{A}_\Z^n\setminus\bigcup_{H\in\cA_n} H,$$ where $\cA_n$ denotes the set of hyperplanes in affine $n$-space $\mathbb{A}_\Z^n$ defined by the linear equations in (\ref{HyperplaneEqns}). Furthermore, the set $\cA_n$  of hyperplanes is stable under the $B_n$--action on $\mathbb{A}_\Z^n$ by signed permutation matrices  and so $\cM_n$ has an action of $B_n$. The quotient scheme $\cY_n:=\cM_n/B_n$  is also defined over $\Z$.

In what follows we let $p>2$ be a fixed prime and let $q=p^r$ for some $r\geq 1$.
Since the  schemes $\cM_n$ and $\cY_n$ are defined over $\Z$, they may be reduced to  schemes over  the finite field $\F_q$. The set ${\cM}_n(\F_q)$   of  $\F_q$--points of ${\cM}_n$  parametrizes $n$--tuples of nonzero points in $\mathbb{A}_{\F_q}^1$ distinct from each other and from their negatives. 
In this chapter we are interested on statistics on the set $\cY_n(\F_q)$ of $\F_q$--points of $\cY_n$. The set  $\cY_n(\F_q)$ may be viewed as a set of polynomials in $\F_q[T]$, as we will now see.

Consider the map $ \psi:\mathbb{A}^n_\mathbb{Z} \longrightarrow \mathbb{A}^n_\mathbb{Z}$ 
 which sends an $n$--tuple $(x_1,\ldots,x_n)\in \mathbb{A}^n_\mathbb{Z}$  to the coefficients of the degree--$n$ polynomial $g(T)=(T-x_1^2)(T-x_2^2)\cdots(T-x_n^2)$. Observe that the space of monic degree--$n$ polynomials is again $\mathbb{A}^n_\mathbb{Z}$  parametrized by the coefficients $(a_0,\ldots, a_{n-1})$ of the polynomials. For each monic polynomial $$g(T)=a_0+a_1 T+\ldots + a_{n-1}T^{n-1}+T^n$$ in the image of $\psi$, 
  the coefficient $a_{n-i}$ is given by a polynomial $f_i(x_1,x_2,\ldots,x_n)$ defined by evaluating the $i^{th}$ elementary symmetric function $e_i$ at the roots  $x_1^2,x_2^2,\ldots, x_n^2$ of $f(T)$,
$$f_i(x_1,x_2,\ldots,x_n):=e_i(x_1^2,x_2^2,\ldots, x_n^2).$$
Specifically,  
$$a_{n-1}=\sum_{1\leq i \leq n} x_i^2, \qquad a_{n-2}=\sum_{1\leq i<j\leq n} x_i^2x_j^2,\qquad \ldots, \qquad  a_0= x_1^2\cdots x_n^2.$$
  Furthermore, there is a natural action of $B_n$ on the affine scheme $\mathbb{A}^n_\mathbb{Z}$ by signed permutation matrices. The map $\psi$ is constant on $B_n$--orbits and it factors through the quotient scheme $(\mathbb{A}^n_\mathbb{Z}/B_n)$.
 It follows form the fundamental theorem of symmetric polynomials that the $B_n$--invariant functions on $\mathbb{A}^n_\mathbb{Z}$ are 
\begin{align*}
\mathbb{Z}[x_1,x_2,\ldots,x_n]^{B_n} & \cong \left( \mathbb{Z}[x_1,x_2,\ldots,x_n]^{(\Z/2\Z)^n} \right)^{B_n/(\Z/2\Z)^n}  \\ 
&\cong\mathbb{Z}[x_1^2,x_2^2,\ldots,x_n^2]^{S_n} \\
&\cong\mathbb{Z}[f_1,f_2,\ldots,f_n] 
\end{align*}
and the map $\psi$ therefore induces an isomorphism of schemes $$(\mathbb{A}^n_\mathbb{Z}/B_n)  \xrightarrow{\cong} \mathbb{A}^n_\mathbb{Z}.$$ 


The restriction of  $\psi$ to ${\cM}_n$  gives us the unramified $B_n$--cover $$\cM_n \rightarrow \cY_n={\cM}_n/B_n.$$  
Since $\bar{\F}_p$ is algebraically closed, the $\bar{\F}_p$--points  $\mathcal{Y}_n(\bar{\F}_p)$ corresponds to the set of $B_n$--orbits of ${\cM}_n(\bar{\F}_p)$. A point in $\mathcal{Y}_n(\F_q)$ corresponds to a set $$\{\pm x_1,\ldots,\pm x_n\}\subset {\bar{\F}_p} \quad \text {with $x_i\neq 0$ for all $i$,  invariant under Gal$({\bar{\F}_p}/{\F_q})$}.$$ 
Given the map $\psi$ and the isomorphism $(\mathbb{A}^n_\mathbb{Z}/B_n)\rightarrow \mathbb{A}^n_\mathbb{Z}$, we interpret the $\F$--points of  $ \cY_n$ as  
$$\mathcal{Y}_n(\F):=\{ f\in\F[T]: \text{ degree-$n$  squarefree with } f(0)\neq0\big\},$$
where $\F$ is the field $\F_q$ or $\bar{\F}_p$.

\subsection{A point-counting formula}   \label{SectionPoint-CountingFormula} 

 Let us consider the set $\mathcal{Y}_n(\bar{\mathbb{F}}_q)$ of $\bar{\mathbb{F}}_q$--points of $\mathcal{Y}_n$. The {\it geometric Frobenius morphism} $$\Fr_q:\mathcal{Y}_n(\bar{\mathbb{F}}_p)\longrightarrow \mathcal{Y}_n(\bar{\mathbb{F}}_p)$$ acts on the coordinates in an affine chart by $x\longmapsto x^q$. The map $\Fr_q$ fixes precisely those points with coordinates in $\mathbb{F}_q$, that is, 
 $$\mathcal{Y}_n(\mathbb{F}_q)=\text{Fix}\big(\Fr_q:\mathcal{Y}_n(\bar{\mathbb{F}}_p)\rightarrow \mathcal{Y}_n(\bar{\mathbb{F}}_p)\big).$$
For each $f\in \mathcal{Y}_n(\mathbb{F}_q)$, the Frobenius morphism $\Fr_q$ fixes the polynomial $f$ and acts on the set $$SQ(f):=\{x\in\bar{\mathbb{F}}_q:f(x^2)=0\}=\{\pm x_1,\pm x_2,\ldots, \pm x_n\}$$ of square roots of the zeroes of $f$. If we were to order the set of roots, then the action of Frobenius would define a signed permutation $\sigma_f \in B_n$.  On the unordered roots the signed permutation $\sigma_f$  is well-defined up to conjugation. Thus  given a class function $\chi$  on $B_n$, we can define $$\chi(f):=\chi(\sigma_f).$$

With this notation we may state the following point-counting formula for the set $\mathcal{Y}_n(\mathbb{F}_q)$:


\begin{thm}\label{COUNTING}{\bf (A point-counting formula for $\mathcal{Y}_n(\mathbb{F}_q)$).} Let $q$ be an integral power of a prime number $p>2$ and let $\chi$ be a class function on $B_n$. Then for each $n\geq 1$ we have
	\begin{align}\label{COUNT}
	&& \sum_{f\in \mathcal{Y}_n(\mathbb{F}_q)} \chi(f)=\sum_{d=0}^n  (-1)^{d}q^{n-d} \big\langle\chi, H^{d}({\cM_{BC_n}} (\mathbb{C});\mathbb{C}\big)\big\rangle_{B_n}.
	\end{align}
\end{thm}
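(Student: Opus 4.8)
The plan is to apply the twisted Grothendieck--Lefschetz trace formula to the scheme $\cM_n$ over $\F_q$ together with its $B_n$-action, following the strategy of Church--Ellenberg--Farb's Theorem 3.7 but in the presence of the ramified cover structure coming from the hyperoctahedral group. First I would recall the setup from Section \ref{MBCSchemes}: $\cM_n$ is a smooth affine scheme over $\Z$ of dimension $n$, with a free $B_n$-action (the cover $\cM_n \to \cY_n$ is unramified), and $\cY_n = \cM_n/B_n$. The $\F_q$-points of $\cY_n$ are exactly the $\mathrm{Gal}(\bar\F_p/\F_q)$-stable subsets $\{\pm x_1,\dots,\pm x_n\}$, equivalently the monic squarefree degree-$n$ polynomials with nonzero constant term, and to each such $f$ the Frobenius determines a conjugacy class $\sigma_f \in B_n$ as described.

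The key steps, in order: (1) State the twisted/equivariant form of the Grothendieck--Lefschetz formula — for a finite group $G$ acting freely on a smooth variety $X/\F_q$ with quotient $Y = X/G$, and a class function $\chi$ on $G$, one has
\[
\sum_{y \in Y(\F_q)} \chi(\sigma_y) \;=\; \sum_{d} (-1)^d \big\langle \chi,\, \mathrm{Frob}_q \curvearrowright H^d_c(X_{\bar\F_p}; \Q_\ell) \big\rangle_G,
\]
where $\sigma_y \in G$ is the Frobenius-induced conjugacy class on the fiber over $y$. This is the standard consequence of decomposing the $\ell$-adic cohomology of $Y$ (with coefficients in the local systems attached to irreducible $G$-representations) and applying the ordinary Lefschetz formula fiberwise; I would cite Lehrer \cite{LEHRER_LADIC} and the discussion in \cite{CEFPointCounting}. (2) Identify the cohomology. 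Since $\cM_n$ is the complement of a complex hyperplane arrangement defined over $\Z$, work of Lehrer \cite{LEHRER_LADIC} and Kim \cite{KIM} shows that $\mathrm{Frob}_q$ acts on $H^d_c(\cM_{n,\bar\F_p}; \Q_\ell)$ by the scalar $q^{n-d}$, and that this étale cohomology is $B_n$-equivariantly isomorphic (after extending scalars) to the singular cohomology $H^{2n-d}_c(\cM_{BC_n}(\C);\C) \cong H^{d}(\cM_{BC_n}(\C);\C)$ — using Poincaré duality on the $n$-dimensional complex manifold together with the fact that Orlik--Solomon cohomology is pure (it is generated in degree $1$ by classes of Hodge--Tate type, each contributing a Frobenius eigenvalue $q$). (3) Substitute the scalar $q^{n-d}$ out of the inner product, giving $\sum_{d=0}^{n}(-1)^d q^{n-d}\langle \chi, H^d(\cM_{BC_n}(\C);\C)\rangle_{B_n}$, and match the left-hand side with $\sum_{f \in \cY_n(\F_q)}\chi(f)$ via the identification $\sigma_f = \sigma_y$ from Section \ref{SectionPoint-CountingFormula}.

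The main obstacle is step (2): pinning down precisely that $\mathrm{Frob}_q$ acts on $H^d_c(\cM_{n})$ as multiplication by $q^{n-d}$ and that the resulting $B_n$-representation agrees with the complex cohomology of $\cM_{BC_n}(\C)$. This is where the input of Lehrer and Kim is essential — one needs that the arrangement cohomology is "of Tate type" with the expected weights, so that the comparison between $\ell$-adic and Betti cohomology is compatible with the $B_n$-action and the Frobenius acts by an explicit power of $q$ on each graded piece. Everything else is the formal machinery of the equivariant Lefschetz formula applied to the free cover $\cM_n \to \cY_n$; once the Frobenius eigenvalues are known the sum over $d$ collapses to the stated identity, and the restriction of the range of summation to $0 \le d \le n$ is automatic since $\cM_{BC_n}(\C)$ is a $2n$-real-dimensional Stein manifold with cohomology concentrated in degrees $\le n$.
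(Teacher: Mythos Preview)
Your proposal is correct and follows essentially the same route as the paper's proof: apply the twisted Grothendieck--Lefschetz trace formula to the free $B_n$-cover $\cM_n \to \cY_n$, then invoke Lehrer's comparison theorem (Theorem~\ref{COMP}) and the Lehrer--Kim computation of the Frobenius eigenvalues (Theorem~\ref{FROB}) to replace the $\ell$-adic compactly supported cohomology by the singular cohomology of $\cM_{BC_n}(\C)$ with the scalar $q^{n-d}$ pulled out. The only cosmetic difference is that the paper spells out the equivariant trace formula by reducing to irreducible $\chi$, building the associated local system $\mathcal V$ on $\cY_n$, and using the transfer isomorphism $H^*_c(\cY_n;\mathcal V)\cong (H^*_c(\cM_n;\Q_\ell)\otimes V)^{B_n}$ together with self-duality of $B_n$-representations, whereas you package this as a single citable black-box formula; both amount to the same argument.
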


Formula (\ref{COUNT}) relates statistics on the set of polynomials $\mathcal{Y}_n(\mathbb{F}_q)$ with the topology of the hyperplane complement $\cM_{BC_n} (\mathbb{C})$. This result  is the  type B/C analogue to Church--Ellenberg--Farb  \cite[Theorem 1]{CEFPointCounting}. Theorem \ref{COUNTING} follows from Grothendieck's celebrated trace formula, combined with results of Lehrer \cite{LEHRER_LADIC} and Kim \cite{KIM}. Theorem \ref{COUNTING} was independently proven by Matei \cite[Theorem 4]{Matei}.



\subsubsection*{ Grothendieck-Lefschetz formula with twisted coefficients}

The key tool to obtain the point-counting formula (\ref{COUNT}) for $\mathcal{Y}_n(\mathbb{F}_q)$  is the {\em  Grothendieck-Lefschetz fixed-point formula}.  Also called the {\em  Grothendieck's trace formula}, this result is an analogue of the Lefschetz fixed-point theorem for the $\ell$--adic cohomology theory that was developed by Grothendieck and others to resolve the Weil conjectures. 
These influential results  show the   deep connection between the topology of schemes defined over the complex numbers and the arithmetic of schemes defined over finite fields. 

Here we use  a version of the Grothendieck's trace formula  {\em with twisted coefficients}. When applied to the schemes $\cY_n$  the formula yields the following: 

\begin{align}\label{TWISTED}
&& \sum_{f\in \mathcal{Y}_n(\mathbb{F}_q)} \text{tr} (\Fr_q : \mathcal{V}_f)=\sum_{d\geq 0}  (-1)^{d} \text{tr}\big( \Fr_q : H_c^{2n-d}(\mathcal{Y}_{n / \bar{\mathbb{F}}_p};\mathcal{V})\big).
\end{align}
The system of coefficients $\mathcal{V}$ is an $\ell$-adic sheaf over $\mathcal{Y}_n$,
where $\ell$ is prime to $q$.
The left hand side of the formula adds the local contributions of the trace of Frobenius $\Fr_q$ on each stalk $\mathcal{V}_f$ of $\mathcal{V}$ at $f\in \mathcal{Y}_n(\mathbb{F}_q)$.
On the right hand side of the formula  we have the trace of Frobenius on $H_c^{*}(\mathcal{Y}_{n / \bar{\mathbb{F}}_p};\mathcal{V}\big)$,  the compactly-supported $\ell$-adic cohomology of $\mathcal{Y}_{n / \bar{\mathbb{F}}_p}=\cY_n \times_{\F_q}\bar{\F}_p$.

We refer the reader to Deligne's  {\it Rapport sur la formule des traces}  \cite[Th\'eor\`eme 3.2]{DELIGNE} for the general statement and proof of  Formula (\ref{TWISTED}). For an introduction to $\ell$-adic cohomology and its relation with the Weil conjectures see Carter \cite[Chapter 7.1, Appx]{CARTER}  and Hartshorne \cite[Appx C]{HARTSHORNE}. A more detailed exposition can be found in Milne  \cite{MILNE}. Church--Ellenberg--Farb \cite[Section 2]{CEFPointCounting} and  Gadish  \cite{GADISH}  describe a version of the trace formula suited to  applications in representation stability.

\subsubsection*{ A comparison theorem and the action of Frobenius}

 In order to derive  Theorem \ref{COUNTING} from the trace formula (\ref{TWISTED}) we use a comparison theorem that relates the $\ell$-adic cohomology groups of $\cM_{n/\bar{\F}_p}=\cM_n \times_{\F_q}\bar{\F}_p$ and the classical cohomology groups of the complex manifold ${\cM_{BC_n}}(\C)$.  Lehrer  \cite{LEHRER_LADIC} uses the intersection lattice of a hyperplane arrangement  to provide such a comparison result between the singular and $\ell$-adic theories.

The intersection lattice $L(\cA_n)$ of a type $B_n/C_n$ Coxeter arrangement is known to be isomorphic to the {\it signed partition lattice} $\Pi_n^B$. The elements of $\Pi_n^B$ are  partitions of the set $[n]:=\{0,1,\ldots, n\}$ which satisfy the conditions:
\begin{enumerate}
\item[(i)]  any element but the smallest one in each nonzero block can be barred (signed), and
\item[(ii)] the block containing $0$ is called the {\it zero block} and it has no barred elements.  
\end{enumerate} 
A signed partition $\pi\in\Pi_n^B$ encodes an intersection of planes in $\cA_n$ as follows. Take the subspace $\ell_\pi$ in $\mathbb{A}^n$ given by $(x_1,\ldots,x_n)$ defined by the equations
\begin{itemize}
	\item $x_i=x_j$ when $i$ and $j$ are in the same block of $\pi$ and both are barred or both are unbarred
	\item $x_i= -x_j$ if $i$ and $j$ are in the same block of $\pi$ and one is barred and the other unbarred
	\item $x_i=0$ whenever $i$ is in the zero block of $\pi$.
\end{itemize}
For example, the signed partition of the set $[7]$ $$\pi = 0\ 1\ 4\ |\ 2\ \bar{5}\ 7\ |\ 3\ \bar{6}$$ 
corresponds to the linear subspace of $\mathbb{A}^7$ $$\big\{(x_1,x_2,\ldots,x_7) \; |\;   x_2=-x_5=x_7; \ x_3=-x_6;\ x_1=x_4=0\big\}.$$ Conversely, an intersection of hyperplanes in $\cA_n$ determines a signed partition. We order the signed partitions $\Pi_n^B$ by reverse inclusion of the corresponding linear subspaces, and the resultant poset is a lattice; see for example Bjorner--Wachs \cite{LATTICE} for details on this lattice and the isomorphism of lattices $\Pi_n^B \cong L(\cA_n)$.

Let $L(\cA_n)_q$ denote the intersection lattice of hyperplanes reduced modulo $q$. From the above description, it is clear that for any prime $q> 2$ the intersection lattices $L(\cA_n)_q$ is always isomorphic to the intersection lattice  $L(\cA_n)$ over $\C$.   Lehrer's results therefore provide the following  equivariant comparison theorem.

\begin{thm}{\bf (Lehrer's comparison theorem)} \textnormal{\cite[Theorems 1.1 and 1.5]{LEHRER_LADIC}.} \label{COMP}   For any prime $p>2$ and  for each signed permutation $w\in B_n$ 
	
	$$\text{tr}\big(w,H^d({\cM_{BC}}_n(\C);\C)\big)=\text{tr}\big(w,H_c^{2n-d}( \cM_{n/ \bar{\mathbb{F}}_p};\bar{\mathbb{Q}}_\ell)\big).$$
	
	In particular, $$\dim_\C\big(H^d({\cM_{BC}}_n(\C);\C)\big)=\dim_{\bar{\mathbb{Q}}_\ell} \big(H_c^{2n-d}(\cM_{n/ \bar{\mathbb{F}}_p};\bar{\mathbb{Q}}_\ell)\big),$$
	
	
	where $\ell$ is a prime different from $p$.
\end{thm}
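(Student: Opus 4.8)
The plan is to identify both sides of the displayed equality with one and the same combinatorial quantity attached to the intersection lattice $L(\cA_n)$ together with its $B_n$--action, and then to invoke the fact, recalled just above, that this lattice-with-action is unchanged under reduction modulo any prime $p>2$. First I would reduce from $H_c^{2n-d}$ to $H^d$: since $\cM_{BC_n}(\C)$ is a smooth complex $n$--manifold and $\cM_{n/\bar{\F}_p}$ a smooth $\bar{\F}_p$--variety of dimension $n$, Poincaré duality gives $B_n$--equivariant isomorphisms $H_c^{2n-d}(\cM_{BC_n}(\C);\C)\cong H^d(\cM_{BC_n}(\C);\C)^\vee$ and $H_c^{2n-d}(\cM_{n/\bar{\F}_p};\bar{\Q}_\ell)\cong H^d(\cM_{n/\bar{\F}_p};\bar{\Q}_\ell)^\vee(-n)$, with $B_n$ acting trivially on the Tate twist $\bar{\Q}_\ell(-n)$ (the fundamental class is canonical and $w$ holomorphic, resp.\ geometric). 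As $B_n$ is a real reflection group, every $w$ is conjugate to $w^{-1}$, so $\text{tr}(w,U^\vee)=\text{tr}(w^{-1},U)=\text{tr}(w,U)$ for any $B_n$--representation $U$; it therefore suffices to prove $\text{tr}\big(w,H^d(\cM_{BC_n}(\C);\C)\big)=\text{tr}\big(w,H^d(\cM_{n/\bar{\F}_p};\bar{\Q}_\ell)\big)$ for all $w\in B_n$ and $d\geq 0$. On the complex side this is immediate from the Orlik--Solomon presentation (Theorem \ref{TheoremOrlikSolomon}): the isomorphism of $H^*(\cM_{BC_n}(\C);\C)$ with the Orlik--Solomon algebra $OS^*\big(L(\cA_n)\big)$ is natural in the arrangement, hence $B_n$--equivariant, so the left-hand side equals $\text{tr}\big(w,OS^d(L(\cA_n))\big)$, a number depending only on the poset $L(\cA_n)$ and the $w$--action on it.

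The substantial step --- this is Lehrer's Theorems 1.1 and 1.5 --- is the $\ell$--adic counterpart: a $B_n$--equivariant isomorphism $H^d(\cM_{n/\bar{\F}_p};\bar{\Q}_\ell)\cong OS^d\big(L(\cA_n)_p\big)\otimes\bar{\Q}_\ell(-d)$, with $B_n$ acting trivially on the Tate twist. I would prove this in two stages. \emph{Purity.} By induction on the number of hyperplanes, using the Gysin sequence relating $\cM(\cA)$, the complement $\cM(\cA\setminus\{H\})$ of a deletion, and the restriction $\cM(\cA|_H)$ viewed as a smooth divisor in the latter --- with base case an empty or single-hyperplane arrangement, whose complement is a product of affine spaces and a copy of $\mathbb{G}_m$ --- one sees that a morphism between pure Galois modules of distinct weights vanishes, so the sequence breaks into short exact sequences and $H^d(\cM_{n/\bar{\F}_p};\bar{\Q}_\ell)$ is pure of weight $2d$ and of Tate type, i.e.\ isomorphic as a Galois module to $\bar{\Q}_\ell(-d)^{\oplus b_d}$; hence on any $\F_q$--model with $q=p^r$ the Frobenius $\Fr_q$, which commutes with the $B_n$--action, acts on $H^d$ as the scalar $q^d$, and $\text{tr}(w\Fr_q,H^d)=q^d\,\text{tr}(w,H^d)$. \emph{Point count.} Applying the Grothendieck--Lefschetz trace formula to $w\circ\Fr_q$ acting on the $\F_q$--model of $\cM_n$, and stratifying $\mathbb{A}^n$ by the arrangement, the number of $w\Fr_q$--fixed geometric points is a Möbius--weighted sum over $\{X\in L(\cA_n)_q : wX=X\}$ of terms $q^{\dim X^w}$; this is exactly the combinatorial expression that the classical Orlik--Solomon formula assigns to $\sum_d(-1)^d q^d\,\text{tr}\big(w,OS^d(L(\cA_n)_q)\big)$. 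Equating the two --- they agree for all $q=p^r$, hence as polynomials in $q$ --- and using purity to separate the contributions of distinct degrees by their powers of $q$ identifies each $\text{tr}\big(w,H^d(\cM_{n/\bar{\F}_p};\bar{\Q}_\ell)\big)$ with $\text{tr}\big(w,OS^d(L(\cA_n)_p)\big)$.

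It remains to use the combinatorial input recorded just before the statement: for every prime $p>2$ the reduced lattice $L(\cA_n)_p$ is isomorphic, as a poset carrying a $B_n$--action, to the complex intersection lattice $L(\cA_n)$, both being the signed partition lattice $\Pi_n^B$ (the only arithmetic used in the equations $x_i=\pm x_j$, $x_i=0$ is $-1\neq 1$). Since the Orlik--Solomon algebra and its $B_n$--action depend only on this poset-with-action, $\text{tr}\big(w,OS^d(L(\cA_n)_p)\big)=\text{tr}\big(w,OS^d(L(\cA_n))\big)$, and combining this with the two preceding paragraphs yields $\text{tr}\big(w,H^d(\cM_{BC_n}(\C);\C)\big)=\text{tr}\big(w,H_c^{2n-d}(\cM_{n/\bar{\F}_p};\bar{\Q}_\ell)\big)$ for all $w\in B_n$ and $d$; specializing to $w=1$, whose trace records the dimension, gives the asserted equality of dimensions. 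The genuine obstacle is the $\ell$--adic purity/Orlik--Solomon step of the second paragraph --- in particular verifying the Tate type, so that Frobenius acts as a scalar on each $H^d$ and the power of $q$ pins down the cohomological degree; everything else is standard Poincaré duality, the character symmetry of a real reflection group, or the already-established invariance of the intersection lattice under reduction.
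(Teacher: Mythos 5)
You should note at the outset that the paper contains no proof of Theorem \ref{COMP}: it is quoted verbatim from Lehrer \cite{LEHRER_LADIC} (Theorems 1.1 and 1.5), the only input supplied in the text being the observation that for $p>2$ the reduced intersection lattice $L(\cA_n)_q$ is isomorphic, $B_n$--equivariantly, to $L(\cA_n)\cong\Pi_n^B$. So there is no internal argument to compare against; what you have written is a reconstruction of Lehrer's own route --- Orlik--Solomon presentations on both sides, purity of the $\ell$--adic cohomology by deletion--restriction, identification of the $B_n$--character by counting fixed points of $w\circ\Fr_q$ for all $q=p^r$ and separating cohomological degrees by powers of $q$ --- combined with the lattice--invariance remark the paper highlights. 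The opening reductions (étale Poincaré duality to pass from $H_c^{2n-d}$ to $H^d$, and the fact that every $w\in B_n$ is conjugate to $w^{-1}$ so traces on duals agree) are fine, as is the weight argument that splits the Gysin sequences in the purity step.

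Two details in your point-count step are, however, wrong as stated. For a $w$--stable intersection subspace $X$ defined over $\F_q$, the fixed points of $w\circ\Fr_q$ on $X(\bar{\F}_q)$ are the rational points of a twisted $\F_q$--form of $X$, so their number is $q^{\dim X}$ by Lang's theorem, not $q^{\dim X^w}$: for $X=\mathbb{A}^1$ and $w=-1$ the equation $x^q=-x$ has $q$ solutions, while $q^{\dim X^w}=1$. Consequently the stratification gives $|\cM_n^{w\Fr_q}|=\sum_{X\in L^w}\mu_{L^w}(X,V)\,q^{\dim X}$, where the M\"obius function is that of the sublattice $L^w$ of $w$--stable elements (not of the full lattice restricted to $L^w$), and a codimension-$d$ element contributes $q^{n-d}$; the expression this must be matched with is $\sum_d(-1)^d q^{n-d}\,\text{tr}\big(w,OS^d\big)$, not $\sum_d(-1)^d q^{d}\,\text{tr}\big(w,OS^d\big)$. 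Moreover, the identity equating that M\"obius sum over $L^w$ with the alternating trace sum on the Orlik--Solomon algebra of the full lattice is itself a nontrivial equivariant Whitney-type theorem (Orlik--Solomon), not a formality, and must be invoked or reproved; it is part of what Lehrer's argument supplies. With those corrections, and granting the purity/Tate-type step you yourself isolate as the main obstacle, your outline does amount to a proof along the lines of the cited source rather than anything argued in this paper.
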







Furthermore, Lehrer  also determines the  action of the {\it geometric Frobenius} $\Fr_q$ on the $\ell$-adic cohomology groups of $\cM_{n /\bar{\mathbb{F}}_p}$.

\begin{thm}{\bf (The action of Frobenius on $H_c^{*}(\cM_{n /\bar{\mathbb{F}}_p};\bar{\mathbb{Q}}_\ell)$).} 
\textnormal{(Lehrer \cite[Propostion 2.4]{LEHRER_LADIC}; see also Kim \cite[Theorem 1'] {KIM}).} \label{FROB} 
	Let $p>2$ be a prime, and $q=p^r$ for some $r\geq 1$. Let $\Fr_q$ be the geometric Frobenius morphism $x\mapsto x^q$. Then
	\begin{itemize}
		\item[i)] $H_c^{2n-d}(\cM_{n /\bar{\mathbb{F}}_p};\bar{\mathbb{Q}}_\ell)=0$ unless $d=0,1\ldots, n$.
		\item[ii)] All the eigenvalues of $\Fr_q$ on $H_c^{2n-d}(\cM_{n /\bar{\mathbb{F}}_p};\bar{\mathbb{Q}}_\ell)$ are equal to $q^{n-d}$.
	\end{itemize}
\end{thm}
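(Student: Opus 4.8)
The plan is to derive both parts from Poincar\'e duality for the smooth affine variety $\cM_{n/\bar{\mathbb{F}}_p}$, together with the fact that the \'etale cohomology of a hyperplane-arrangement complement is of Tate type and generated in degree one. First I would record the geometry: $\cM_n$ is an open subscheme of $\mathbb{A}^n_{\F_q}$, hence smooth of pure dimension $n$, and $\cM_{n/\bar{\mathbb{F}}_p}$ is smooth affine of dimension $n$ over $\bar{\mathbb{F}}_p$; Poincar\'e duality then supplies a $\Fr_q$-equivariant perfect pairing, so that $H^{2n-d}_c(\cM_{n/\bar{\mathbb{F}}_p};\bar{\mathbb{Q}}_\ell)\cong H^{d}(\cM_{n/\bar{\mathbb{F}}_p};\bar{\mathbb{Q}}_\ell)^\vee(-n)$ as $\Fr_q$-modules. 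Part (i) is then immediate: by Artin vanishing $H^{d}(\cM_{n/\bar{\mathbb{F}}_p};\bar{\mathbb{Q}}_\ell)=0$ for $d>n$ since $\cM_{n/\bar{\mathbb{F}}_p}$ is affine of dimension $n$, so $H^{2n-d}_c=0$ for $d>n$, and it vanishes trivially for $d<0$.

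For part (ii) the real content is to show that $\Fr_q$ acts on $H^{d}(\cM_{n/\bar{\mathbb{F}}_p};\bar{\mathbb{Q}}_\ell)$ by the scalar $q^{d}$. I would establish this from the $\ell$-adic Orlik--Solomon description. For each reflecting hyperplane $H\in\cA_n$ its defining linear form gives a morphism $\cM_{n/\bar{\mathbb{F}}_p}\to\mathbb{G}_m$, and pulling back the canonical generator of $H^1(\mathbb{G}_m;\bar{\mathbb{Q}}_\ell)\cong\bar{\mathbb{Q}}_\ell(-1)$ yields a class $e_H$ on which $\Fr_q$ acts by $q$; the Gysin (residue) exact sequence, applied by removing one hyperplane at a time, shows the $e_H$ span $H^1(\cM_{n/\bar{\mathbb{F}}_p};\bar{\mathbb{Q}}_\ell)$, and the cohomology algebra $H^*(\cM_{n/\bar{\mathbb{F}}_p};\bar{\mathbb{Q}}_\ell)$ is generated in degree one. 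Hence $H^{d}$ is spanned by $d$-fold cup products of the $e_H$, each of which lies in $\bar{\mathbb{Q}}_\ell(-d)$, so $\Fr_q$ acts by $q^{d}$. Feeding this into the duality isomorphism above, $\Fr_q$ acts on $H^{2n-d}_c(\cM_{n/\bar{\mathbb{F}}_p};\bar{\mathbb{Q}}_\ell)$ by $q^{-d}\cdot q^{n}=q^{n-d}$, which is (ii).

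The main obstacle is justifying the two $\ell$-adic inputs used above -- that $H^*(\cM_{n/\bar{\mathbb{F}}_p};\bar{\mathbb{Q}}_\ell)$ is generated in degree one and that $H^1$ is exactly $\bigoplus_{H\in\cA_n}\bar{\mathbb{Q}}_\ell(-1)\,e_H$ -- that is, transporting Brieskorn's and Orlik--Solomon's complex-analytic computation to the \'etale setting. One clean route is to invoke Lehrer's lattice-theoretic machinery (the $\ell$-adic analogue of Brieskorn's theorem that underlies Theorem \ref{COMP}), which reduces everything to the combinatorics of the intersection lattice $L(\cA_n)$, which for $p>2$ is the same over $\C$ and over $\bar{\mathbb{F}}_p$. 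A more self-contained alternative, which I would add as a remark, uses that the type-B Coxeter arrangement is supersolvable: forgetting the last coordinate realizes $\cM_{n/\bar{\mathbb{F}}_p}\to\cM_{(n-1)/\bar{\mathbb{F}}_p}$ as an iterated linear fibration whose fiber is $\mathbb{A}^1$ minus $2n-1$ points, with $H^0\cong\bar{\mathbb{Q}}_\ell$ and $H^1\cong\bar{\mathbb{Q}}_\ell(-1)^{\oplus(2n-1)}$; the Leray spectral sequence degenerates for weight reasons, and an induction on $n$ gives both the vanishing in degrees $>n$ and the Tate-type statement, after which Poincar\'e duality concludes as above.
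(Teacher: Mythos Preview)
The paper does not supply its own proof of this theorem: it is quoted as a result of Lehrer \cite[Proposition~2.4]{LEHRER_LADIC} (see also Kim \cite[Theorem~1$'$]{KIM}), and is used as a black box in the derivation of Theorem~\ref{COUNTING}. So there is no in-paper argument to compare against.

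That said, your sketch is correct and is essentially the standard proof, and in fact aligns with what Lehrer does. The two ingredients you isolate --- Poincar\'e duality $H^{2n-d}_c\cong H^d{}^\vee(-n)$ for the smooth affine $n$-fold $\cM_{n/\bar{\mathbb{F}}_p}$, together with the fact that $H^*(\cM_{n/\bar{\mathbb{F}}_p};\bar{\mathbb{Q}}_\ell)$ is generated in degree one by classes $e_H\in\bar{\mathbb{Q}}_\ell(-1)$ --- are exactly the right ones. Artin vanishing gives (i), and purity of $H^d$ in weight $2d$ (hence $\Fr_q$ acting by $q^d$) gives (ii) after dualizing and twisting. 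Your identification of the only nontrivial step, namely the $\ell$-adic Orlik--Solomon/Brieskorn statement that the arrangement cohomology is generated in degree one with Tate-type generators, is accurate; Lehrer's lattice-theoretic argument (which the paper already invokes for Theorem~\ref{COMP}) is precisely the route that handles this, and your alternative via supersolvability of the type~B arrangement (the tower $\cM_n\to\cM_{n-1}$ with fiber $\mathbb{A}^1$ minus $2n-1$ points) is a legitimate and arguably more elementary substitute.
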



\begin{proof}[Proof of Theorem \ref{COUNTING}] We use  Theorems \ref{FROB} and  \ref{COMP} above to rewrite  the  Grothendieck's trace formula with twisted coefficients (\ref{TWISTED})  in the form stated  in  (\ref{COUNT}). 
  By the linearity of both sides of the equation,  it suffices to consider the case when $\chi$ is the character of an irreducible $B_n$--representation $V$ over $\mathbb{Q}_\ell$. The	 $B_n$--cover $\cM_n\rightarrow \mathcal{Y}_n$ gives a natural correspondence between the set of finite-dimensional representations $V$ of $B_n$ (up to conjugacy) and  the set of finite-dimensional local systems $\mathcal{V}$ on $\mathcal{Y}_n$  (up to isomorphism) that become trivial when  pulled back to $\cM_n$. 

Every irreducible $B_n$--representation in charateristic zero can be defined over $\mathbb{Z}$. This is true of the irreducible $S_n$--representations, and so it follows for $B_n$ from the construction of the irreducible representations -- a procedure that (up to signs) involves pulling back and inducing up from irreducible representations of $S_n$; see Geck--Pfeiffer \cite{GeckPfeiffer}. Hence
  the  local system $\mathcal{V}$ corresponding to $V$ defines an $\ell$-adic sheaf as required for Formula (\ref{TWISTED}); see for example Gadish \cite[Example 2.3 \& Remark 2.5]{GADISH} for details. Furthermore, each stalk $\mathcal{V}_f$ is isomorphic to the representation $V$, and the Frobenius morphism $\Fr_q$ acts on $\mathcal{V}_f$  as the signed permutation $\sigma_f\in B_n$. Then $$\mathrm{tr}(\Fr_q : \mathcal{V}_f)=\chi(\sigma_f):=\chi(f)$$ and  the left hand sides of Formulas (\ref{TWISTED}) and  (\ref{COUNT}) agree. 
	
	To verify the right hand side of the Formula (\ref{COUNT}) let $\bar{\mathcal{V}}$ be the pullback of $\mathcal{V}$ to $\cM_n$. By a transfer argument 
	$$H_c^{d}(\mathcal{Y}_{n/ \bar{\mathbb{F}}_p};\mathcal{V})\cong H_c^{d}(\cM_{n / \bar{\mathbb{F}}_p};\bar{\mathcal{V}})^{B_n}$$ 
	and since $\bar{\mathcal{V}}$ is trivial over $\cM_n$, we obtain
		$$H_c^{d}(\cM_{n / \bar{\mathbb{F}}_p};\bar{\mathcal{V}})^{B_n}\cong \big(H_c^{d}(\cM_{n / \bar{\mathbb{F}}_p};\mathbb{Q}_\ell)\otimes V\big)^{B_n}.$$
We extend scalars to $\bar{\mathbb{Q}}_\ell$. Theorem \ref{FROB} implies that $\Fr_q$ acts on $H_c^{2n-d}(\cM_{n / \bar{\mathbb{F}}_p};\bar{\mathbb{Q}}_\ell)$ by multiplication by $q^{n-d}$. Since  the $\Fr_q$-action commutes with the $B_n$-action  we have
	$$\text{tr}\big( \Fr_q : H_c^{2n-d}(\mathcal{Y}_{n / \bar{\mathbb{F}}_p};\mathcal{V})\big)=q^{n-d}\dim_{\bar{\mathbb{Q}}_\ell}  \big(H_c^{d}(\cM_{n / \bar{\mathbb{F}}_p};\bar{\mathbb{Q}}_\ell)\otimes V\big)^{B_n}.$$

	Any  representation $V$ of $B_n$ in characteristic zero is self-dual; this follows (for example) from Geck--Pfeiffer \cite[Corollary 3.2.14]{GeckPfeiffer}. Hence   
	$$\dim_{\bar{\mathbb{Q}}_\ell}  \big(H_c^{d}(\cM_{n / \bar{\mathbb{F}}_p};\bar{\mathbb{Q}}_\ell)\otimes V\big)^{B_n} = \big\langle\chi, H_c^{2n-d}(\cM_{n / \bar{\mathbb{F}}_p};\bar{\mathbb{Q}}_\ell)	\big\rangle_{B_n}= \big\langle\chi, H^{d}({\cM_{BC}}_n(\mathbb{C});\mathbb{C})\big\rangle_{B_n},$$
	where the last equality follows from the comparison theorem, Theorem  \ref{COMP}. We have recovered the right-hand side of Formula (\ref{COUNT}), and completed the proof.
		 \end{proof}
	

\subsubsection*{Interpreting the values of character polynomials} 
Consider a squarefree polynomial $f$ with nonzero constant term, and a hyperoctahedral character polynomial $P$. In this subsection we will see how to interpret the value $P(f)$ concretely in terms of the data of the irreducible factors of $f$ and the square roots of their zeroes. To do this we will use the following terminology. 

\begin{defn}{\bf (QR and NQR field elements).} \label{DefnQR} Recall that an element $\theta\in\F_{q}$ is called a {\it quadratic residue over $\mathbb{F}_q$}  if there exists some $x\in\F_q$ such that $x^2=\theta$, otherwise we refer to $\theta$ as a {\it quadratic nonresidue over $\mathbb{F}_q$}. More generally, if $\theta\in\bar{\mathbb{F}}_q$, let $$\deg(\theta):=\min\{r: \theta\in\mathbb{F}_{q^r}\}$$ and let $\sqrt{\theta}$ be one of the solutions of $x^2-\theta$. A nonzero $\theta\in\bar{\mathbb{F}}_q$ will be called a {\it quadratic residue (QR)} if $\deg(\theta)=\deg(\sqrt{\theta})$ and  a {\it quadratic nonresidue (NQR)} otherwise. \end{defn} 

Given an irreducible polynomial $g\in\mathcal{Y}_n(\F_q)$, either all the roots of $g$ are QR, or they all are NQR, which allows for the following classification. 
\begin{defn}{\bf (QR and NQR irreducible polynomials).}
An irreducible polynomial $g(x)$ is called QR if all of its roots are QR, and NQR otherwise. 
\end{defn}
This fact about roots of irreducible polynomials can be proven directly using Galois theory, but is also inherent in the classification of $B_n$ signed cycle types as products of positive and negative cycles (Definition \ref{DefnSignedCycleType}), as we will see below.




For each $f\in \mathcal{Y}_n(\mathbb{F}_q)$, recall that the signed permutation $\sigma_f$ induced by $\Fr_q$ acts on the set $$SQ(f):=\{x\in\bar{\mathbb{F}}_q:f(x^2)=0\}=\{\pm x_1,\pm x_2,\ldots, \pm x_n\}$$ of square roots of the zeroes of $f$.   The image of $\sigma_f$ under the projection $B_n \to S_n$ encodes the action of $\sigma_f$ on the zeroes $\{ x_1^2,x_2^2,\ldots,x_n^2\}$ of $f$. 

For each positive or negative $r$--cycle of  $\sigma_f$, we can consider its corresponding  orbit(s)  $$\{\pm x_{i_1}, \pm x_{i_2},\ldots, \pm x_{i_r}\} \in SQ(f).$$ Recall that, by definition, an $r$--cycle in $B_n$ projects to an $r$--cycle in $S_n$, so the zeroes $\{x^2_{i_1}, x^2_{i_2},\ldots, x^2_{i_r}\}$ form a single orbit under the $\Fr_q$--action. 

\begin{itemize}
	\item Since the morphism $(\Fr_q)^r=\Fr_{q^r}$ fixes the set $\{x_{i_1}^2, x_{i_2}^2,\ldots, x_{i_r}^2\}$ pointwise, then the zeroes $x_{i_1}^2, x_{i_2}^2,\ldots, x_{i_r}^2$ of $f$ are in $\mathbb{F}_{q^r}$. No smaller power of $\Fr_q$ fixes these any of these squares, so they do not lie in any smaller field. This set of roots then corresponds to  an irreducible degree--$r$ factor of $f$. The total number of $r$--cycles in $\sigma_f$ is
		$$X_r(f)+Y_r(f)= \text{\# degree--$r$ irreducible factors  of $f$}.$$
		\item If $\{\pm x_{i_1}, \pm x_{i_2},\ldots, \pm x_{i_r}\}$ corresponds to a positive $r$--cycle of $\sigma_f$, then $(\Fr_q)^r=\Fr_{q^r}$ fixes each $x_i$.  This means that the square roots $x_{i_1}, x_{i_2},\ldots, x_{i_r}$ of the zeroes of  $f$ are in $\mathbb{F}_{q^r}$.   Therefore,
	$$X_r(f)=\text{\# degree--$r$ QR  irreducible factors  of $f$}.$$

		\item If $\{\pm x_{i_1}, \pm x_{i_2},\ldots, \pm x_{i_r}\}$ corresponds to  a negative $r$--cycle of $\sigma_f$, then $(\Fr_q)^r=\Fr_{q^r}$ is a product of $r$ transpositions that interchange the two square roots $\pm x_{i}$ of each square $x_i^2$ in the orbit. Hence the square roots $x_{i_1}, x_{i_2},\ldots, x_{i_r}$ of the zeroes of  $f$ are in $\mathbb{F}_{q^{2r}}$, but not in $\mathbb{F}_{q^{r}}$, and
$$Y_r(f)=\text{\# degree--$r$ NQR irreducible factors  of $f$}.$$
\end{itemize}

We summarize these observations in the following proposition. 

\begin{prop} {\bf (Interpreting character polynomials).}
Let $q$ be an odd prime power. Let $f \in \F_q[T]$ be a squarefree polynomial with nonzero constant term. Let $X_1, Y_1, X_2, Y_2, \ldots$ be the $B_n$ class functions of Definition \ref{DefnHypCharPoly}. Then in the notation of Section \ref{SectionPoint-CountingFormula}, 
$$ X_r(f)+Y_r(f)= \text{\# degree--$r$ irreducible factors  of $f$}, $$
$$ X_r(f)=\text{\# degree--$r$ QR  irreducible factors  of $f$}, $$
$$ Y_r(f)=\text{\# degree--$r$ NQR irreducible factors  of $f$}.$$
\end{prop}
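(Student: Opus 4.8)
The plan is to trace through the action of the Frobenius morphism $\Fr_q$ on the set $SQ(f)$ of square roots of the zeroes of $f$, matching the combinatorics of the signed cycle type of $\s_f \in B_n$ (Definition \ref{DefnSignedCycleType}) with the factorization data of $f$. First I would recall that, since $f$ is squarefree of degree $n$ with $f(0) \neq 0$, the set $\{x_1^2, \ldots, x_n^2\}$ of zeroes consists of $n$ distinct nonzero elements of $\bar{\F}_q$, and $SQ(f) = \{\pm x_1, \ldots, \pm x_n\}$ consists of $2n$ distinct elements. The signed permutation $\s_f$ (well-defined up to conjugacy) records how $\Fr_q$ permutes this set subject to the constraint $\s_f(\oo{a}) = \oo{\s_f(a)}$, since Frobenius is a field homomorphism and hence commutes with negation.

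Next I would decompose $\s_f$ into its positive and negative cycles, using Young's uniqueness result cited after Definition \ref{DefnSignedCycleType}. For a positive $r$-cycle of $\s_f$, the underlying orbit of $\Fr_q$ on $SQ(f)$ splits into two orbits $\{x_{i_1}, \ldots, x_{i_r}\}$ and $\{\oo{x_{i_1}}, \ldots, \oo{x_{i_r}}\}$, each of size $r$; the squares $\{x_{i_1}^2, \ldots, x_{i_r}^2\}$ then form a single $\Fr_q$-orbit of size $r$. This orbit is the zero set of a unique monic irreducible degree-$r$ factor $g$ of $f$; moreover, since $\Fr_{q^r} = (\Fr_q)^r$ fixes each $x_{i_k}$ individually, each square root $x_{i_k}$ lies in $\F_{q^r}$, so $\deg(x_{i_k}) \leq r = \deg(x_{i_k}^2)$, hence (by Definition \ref{DefnQR}) each root of $g$ is a QR and $g$ is a QR factor. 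Conversely, for a negative $r$-cycle, the underlying $\Fr_q$-orbit on $SQ(f)$ has size $2r$ (mixing the $x_{i_k}$ and their negatives), but its image in $S_n$ is still an $r$-cycle, so the squares again form a single $\Fr_q$-orbit of size $r$ corresponding to a monic irreducible degree-$r$ factor; here $(\Fr_q)^r = \Fr_{q^r}$ acts as the product of transpositions $(x_{i_k} \ \oo{x_{i_k}})$, so no $x_{i_k}$ lies in $\F_{q^r}$ (only in $\F_{q^{2r}}$), giving $\deg(x_{i_k}) = 2r > r = \deg(x_{i_k}^2)$, so each root of $g$ is an NQR and $g$ is an NQR factor.

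Then I would observe that this correspondence — between cycles of $\s_f$ and irreducible factors of $f$, refined by the positive/negative dichotomy and the QR/NQR dichotomy — is a bijection: every monic irreducible degree-$r$ factor $g$ of $f$ arises from a unique $r$-cycle of $\s_f$, because the roots of $g$ form a single $\Fr_q$-orbit of size $r$ in $\{x_1^2,\ldots,x_n^2\}$, and the preimage in $SQ(f)$ is $\Fr_q$-stable and $\pm$-stable, hence a union of cycles of $\s_f$ projecting to that $r$-cycle — which forces it to be exactly one positive or one negative $r$-cycle. Counting: $X_r(f)$ is the number of positive $r$-cycles, which by the above equals the number of degree-$r$ QR irreducible factors; $Y_r(f)$ is the number of negative $r$-cycles, equal to the number of degree-$r$ NQR irreducible factors; and $X_r(f) + Y_r(f)$ is the total number of $r$-cycles of $\s_f$, equal to the total number of degree-$r$ irreducible factors. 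This also yields, as a byproduct, the claimed dichotomy that the roots of an irreducible factor are either all QR or all NQR. Finally I would note that none of this depends on the choice of representative $\s_f$ within its conjugacy class, since $X_r$ and $Y_r$ are class functions.

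The main obstacle is being careful about the distinction between an $\Fr_q$-orbit on $SQ(f)$ and a cycle of $\s_f$: a negative $r$-cycle corresponds to a single $\Fr_q$-orbit of size $2r$ in $SQ(f)$, not to two orbits of size $r$, and conversely a positive $r$-cycle corresponds to two disjoint orbits of size $r$ — so I must verify that the projection $B_n \to S_n$ sends both types of $r$-cycle to a genuine $r$-cycle (not an $r$-cycle counted twice or an $(2r)$-cycle), which is exactly the content built into Definition \ref{DefnSignedCycleType}. Establishing that this projected $r$-cycle really does cut out a single irreducible degree-$r$ factor (as opposed to, say, a product of two conjugate degree-$r$ irreducibles) is the one place where the squarefree hypothesis and the standard bijection between $\Fr_q$-orbits on $\bar\F_q$ and monic irreducibles over $\F_q$ must be invoked explicitly.
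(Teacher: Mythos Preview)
Your proposal is correct and follows essentially the same approach as the paper: the paper's proof is the bulleted discussion immediately preceding the proposition, which traces the $\Fr_q$-action on $SQ(f)$ through the signed cycle decomposition of $\sigma_f$ exactly as you do. Your write-up is somewhat more careful than the paper's in making the bijection between cycles and irreducible factors explicit and in noting conjugacy-invariance, but the underlying argument is the same.
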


\begin{example}{\bf (Detecting QR and NQR linear factors).} 
Consider the polynomial $$f(T)=T^2-1 \qquad \text{in } \cY_n( \F_7)$$ with roots $x_1=1$ and $x_2=-1$. Let $\epsilon\in\bar{\F}_7$ be a square root of $-1$. Then the Frobenius morphism $\Fr_7$ acts on the set $SQ(f)=\{\pm1, \pm \epsilon \}$ by 
\begin{align*}  1&\longmapsto 1^7=1;  &     \epsilon&\longmapsto \epsilon^7=-\epsilon; \\
 -1 	&\longmapsto (-1)^7=-1;  &  -\epsilon&\longmapsto (-\epsilon)^7=\epsilon,
\end{align*}
  and the signed permutation $\sigma_f$ is $(1 )(\oo{1}) (2 \, \oo{2})\in B_2$.
The cycle type of $\sigma_f$ contains a positive $1$-cycle  $(1 )(\oo{1})$   which corresponds to the QR  linear factor $(T-1)$ and  a negative  $1$-cycle  $(2\, \oo{2})$   which corresponds to the NQR  linear factor $(T+1)$, since $-1$  is not a square in $\F_7$.
\end{example}




\begin{defn}{\bf (Polynomial statistics on $  \bigcup_{n \geq 0} \mathcal{Y}_n(\mathbb{F}_q)$).} We refer to the functions on $ \bigcup_{n \geq 0} \mathcal{Y}_n(\mathbb{F}_q)$ defined by hyperoctahedral character polynomials $P \in \Q[X_1, Y_1, X_2, Y_2, \ldots]$ as \emph{polynomial statistics} on $  \bigcup_{n \geq 0} \mathcal{Y}_n(\mathbb{F}_q)$. 
\end{defn}




\subsection{Asymptotic formula}

We now have all the necessary ingredients to prove our asymptotic result for polynomial statistics on squarefree polynomials over $\F_q$ with nonzero constant term.

\begin{thm}{\bf (Stability for polynomial statistics on $\mathcal{Y}_n(\mathbb{F}_q)$})\label{ASYMCOUNTING} Let $q$ be an integral power of an odd prime. For any polynomial  $P \in \Q[X_1, Y_1, X_2, Y_2, \ldots]$ the normalized statistic $$\frac{\sum_{f\in \mathcal{Y}_n(\mathbb{F}_q)} P(f)}{q^n}$$ converges as $n$ approaches infinity. In fact
		\begin{align}\label{ASYMEQ}
	&&\lim_{n\to \infty} q^{-n}\sum_{f\in \mathcal{Y}_n(\mathbb{F}_q)} P(f)=\sum_{d=0}^\infty  \frac{ \lim_{m\to\infty}\big\langle P_m, H^{d}({\cM_{{BC}_m}(\C)},\C)\big\rangle_{B_m}}{(-q)^{d}}
	\end{align}
	and the series on the right hand side converges.
	
\end{thm}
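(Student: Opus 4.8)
The plan is to feed the twisted point-counting formula of Theorem~\ref{COUNTING} into the combinatorial estimate of Theorem~\ref{TheoremStabilityHyperplanes}. Write $\mathcal B^d_n := H^d(\cM_{BC_n}(\C);\C)$ as in Section~\ref{SectionBCBraid}. Applying Theorem~\ref{COUNTING} with the class function $\chi = P_n$ and dividing by $q^n$, we obtain for every $n$
\begin{align*}
q^{-n}\sum_{f\in\cY_n(\F_q)} P(f) \;=\; \sum_{d=0}^{n} \frac{\langle P_n, \mathcal B^d_n\rangle_{B_n}}{(-q)^d} \;=\; \sum_{d=0}^{\infty} \frac{\langle P_n, \mathcal B^d_n\rangle_{B_n}}{(-q)^d},
\end{align*}
the last equality because $\mathcal B^d_n = 0$ for $d>n$ (Theorem~\ref{FROB}(i) together with the comparison Theorem~\ref{COMP}, or simply because $\cM_{BC_n}(\C)$ is the complement of a rank-$n$ arrangement). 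It therefore suffices to show that, as $n\to\infty$, this series converges to $\sum_{d\geq 0}(-q)^{-d}L_d$, where $L_d := \lim_{m\to\infty}\langle P_m, \mathcal B^d_m\rangle_{B_m}$; in other words, we must justify interchanging $\lim_{n\to\infty}$ with the sum over $d$.

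For each fixed $d$ this is easy: $\mathcal B^d$ is a finitely generated $\FI_{BC}$--module \cite[Theorem~5.8]{FIW2}, so by the Character polynomials part of Theorem~\ref{REPSTABILITY} its character eventually coincides with a character polynomial of bounded degree, whence Proposition~\ref{STABPOL} shows that $\langle P_n, \mathcal B^d_n\rangle_{B_n}$ is independent of $n$ for $n$ large---in particular $L_d$ exists (cf.\ Lemma~\ref{FGCHARALGSTAB}) and $\langle P_n, \mathcal B^d_n\rangle_{B_n}=L_d$ eventually. The substantive input is a bound on $|\langle P_n, \mathcal B^d_n\rangle_{B_n}|$ that is uniform in $n$ and summable against $q^{-d}$, and this is precisely what the proof of Theorem~\ref{TheoremStabilityHyperplanes} delivers: that proof verifies condition I of Lemma~\ref{EQUIV} for the graded $\FI_{BC}$--module $\mathcal B^*$ with a growth function of order $o(2.99^d)$, so condition II of that lemma provides a function $F_P(d)$, independent of $n$ and of order $o(2.99^d)$, with $|\langle P_n, \mathcal B^d_n\rangle_{B_n}| \leq F_P(d)$ for all $n$ and $d$. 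Letting $n\to\infty$ also gives $|L_d|\leq F_P(d)$. Since $q$ is an odd prime power we have $q\geq 3>2.99$, so $\sum_{d\geq 0}F_P(d)/q^d$ converges.

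The interchange is now a routine domination argument. Given $\varepsilon>0$, choose $D$ with $\sum_{d>D}F_P(d)/q^d<\varepsilon/4$; then for all $n$ large enough that $\langle P_n, \mathcal B^d_n\rangle_{B_n}=L_d$ for every $d\leq D$,
\begin{align*}
\left| \sum_{d=0}^{\infty} \frac{\langle P_n, \mathcal B^d_n\rangle_{B_n}}{(-q)^d} - \sum_{d=0}^{\infty} \frac{L_d}{(-q)^d} \right| \;\leq\; \sum_{d>D} \frac{|\langle P_n, \mathcal B^d_n\rangle_{B_n}| + |L_d|}{q^d} \;\leq\; 2\sum_{d>D} \frac{F_P(d)}{q^d} \;<\; \varepsilon .
\end{align*}
Hence $q^{-n}\sum_{f\in\cY_n(\F_q)} P(f)\to \sum_{d\geq 0}L_d/(-q)^d$, which is Formula~(\ref{ASYMEQ}); the absolute convergence of the limiting series is exactly the conclusion of Theorem~\ref{TheoremStabilityHyperplanes}.

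The main obstacle is the limit–sum interchange in the third paragraph, and everything rests on having an $n$-independent, $q^{-d}$-summable bound on the inner products $|\langle P_n, \mathcal B^d_n\rangle_{B_n}|$. Such a bound is not formal---Theorem~\ref{NOLIMIT} shows it fails for $\FI_{BC}$--algebras generated in $\FI_{BC}$--degree $2$ in general---and the hypothesis $q\geq 3$ is essential precisely because $q^d$ must outpace the combinatorial count $\lesssim 2.96^d$ of decorated spanning forests coming from Lemmas~\ref{LemmaSpanningTrees} and~\ref{LemForestEnumeration}. It is the Orlik--Solomon relations of $\mathcal B^*$, exploited in the proof of Theorem~\ref{TheoremStabilityHyperplanes}, that supply the needed estimate.
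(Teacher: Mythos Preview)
Your proof is correct and follows essentially the same approach as the paper: apply Theorem~\ref{COUNTING} to write the normalized statistic as $\sum_{d}(-q)^{-d}\langle P_n,\mathcal B^d_n\rangle_{B_n}$, invoke the uniform bound $|\langle P_n,\mathcal B^d_n\rangle_{B_n}|\leq F_P(d)$ of order $o(2.99^d)$ coming from the proof of Theorem~\ref{TheoremStabilityHyperplanes} via Lemma~\ref{EQUIV}, and then carry out the same tail/domination argument to justify the interchange of limit and sum. The only cosmetic difference is that the paper makes the stabilization range explicit by citing \cite[Cor~5.10]{FIW2} (the character of $\mathcal B^d$ is a character polynomial of degree $\leq 2d$, so $N=2D+\deg P$ suffices), whereas you appeal to Theorem~\ref{REPSTABILITY} and take the maximum over finitely many $d\leq D$; either justification is fine.
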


Theorem \ref{ASYMCOUNTING} states that (appropriately normalized) polynomial statistics on $\mathcal{Y}_n(\mathbb{F}_q)$ converge as $n$ tends to infinity -- and relates the limit to the representation theory of the cohomology groups of the complex hyperplane complements $\cM_{BC_n}(\C)$. 

One consequence of Theorem \ref{ASYMCOUNTING} is that the expected value of polynomial statistics converge. 

\begin{cor}{\bf (Stability for the expected value of  polynomial statistics on $\mathcal{Y}_n(\mathbb{F}_q)$).} \label{CorExpectedValue} Let $q$ be an integral power of an odd prime. For any polynomial  $P \in \Q[X_1, Y_1, X_2, Y_2, \ldots]$ the expected value $$\frac{\sum_{f\in \mathcal{Y}_n(\mathbb{F}_q)} P(f)}{|\cY(F_q)|} $$ of $P$ on $\mathcal{Y}_n(\mathbb{F}_q)$ converges as $n$ tends to infinity, and its limit is
\begin{align*}
	&&\lim_{n\to \infty} \frac{\sum_{f\in \mathcal{Y}_n(\mathbb{F}_q)} P(f)}{|\cY(F_q)|} = \left(\frac{q+1}{q-1}\right) \sum_{d=0}^\infty  \frac{ \lim_{m\to\infty}\big\langle P_m, H^{d}({\cM_{{BC}_m}(\C)},\C)\big\rangle_{B_m}}{(-q)^{d}}.
	\end{align*}
\end{cor}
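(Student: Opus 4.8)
The plan is to derive Corollary \ref{CorExpectedValue} directly from Theorem \ref{ASYMCOUNTING} by dividing the point-count $\sum_{f \in \mathcal{Y}_n(\F_q)} P(f)$ not by $q^n$ but by $|\mathcal{Y}_n(\F_q)|$, and then comparing the two normalizations. The key observation is that
\[
\frac{\sum_{f \in \mathcal{Y}_n(\F_q)} P(f)}{|\mathcal{Y}_n(\F_q)|} = \frac{q^{-n}\sum_{f \in \mathcal{Y}_n(\F_q)} P(f)}{q^{-n}|\mathcal{Y}_n(\F_q)|},
\]
so the limit of the expected value is the quotient of the two limits, provided both exist and the denominator limit is nonzero. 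The numerator's limit exists and equals the right-hand side of \eqref{ASYMEQ} by Theorem \ref{ASYMCOUNTING}. It therefore remains to evaluate $\lim_{n \to \infty} q^{-n}|\mathcal{Y}_n(\F_q)|$.

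First I would apply Theorem \ref{ASYMCOUNTING} (or already Theorem \ref{COUNTING}) to the constant character polynomial $P = 1$. Since $|\mathcal{Y}_n(\F_q)| = \sum_{f \in \mathcal{Y}_n(\F_q)} 1$, this gives
\[
\lim_{n \to \infty} q^{-n}|\mathcal{Y}_n(\F_q)| = \sum_{d = 0}^{\infty} \frac{\lim_{m \to \infty}\big\langle 1, H^d(\cM_{BC_m}(\C),\C)\big\rangle_{B_m}}{(-q)^d}.
\]
Now I would identify this series. One route is to quote the explicit formula for $|\mathcal{Y}_n(\F_q)|$ from Proposition \ref{NUMBER},
\[
|\mathcal{Y}_n(\F_q)| = q^n - 2q^{n-1} + 2q^{n-2} - \cdots + (-1)^{n-1}2q + (-1)^n,
\]
from which $q^{-n}|\mathcal{Y}_n(\F_q)| = 1 - 2q^{-1} + 2q^{-2} - \cdots \to 1 - 2\sum_{k \geq 1}(-q^{-1})^k = 1 - \tfrac{2/q}{1 + 1/q} \cdot(-1)^{?}$; carefully, $\sum_{k\geq 1}(-1)^{k-1}2q^{-k} = \tfrac{2/q}{1+1/q} = \tfrac{2}{q+1}$, so the limit is $1 - \tfrac{2}{q+1} = \tfrac{q-1}{q+1}$. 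Hence the denominator limit is $\tfrac{q-1}{q+1}$, which is nonzero for $q > 1$, and dividing gives the factor $\tfrac{q+1}{q-1}$ in the statement. Alternatively, and more self-containedly, $\mathcal{Y}_n(\F_q)$ counts monic squarefree degree-$n$ polynomials with $f(0) \neq 0$; since the proportion of squarefree monic polynomials of degree $n$ is $1 - 1/q$ for $n \geq 2$ and those divisible by $T$ form a $1/q$-fraction, an elementary sieve gives $|\mathcal{Y}_n(\F_q)| = q^n\big(1 - \tfrac{1}{q}\big)\big(1 - \tfrac{1}{q+1}\big) + O(1)$ — but invoking Proposition \ref{NUMBER} is cleanest.

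The main (minor) obstacle is simply justifying the interchange of limit and quotient: one needs that $\lim q^{-n}|\mathcal{Y}_n(\F_q)|$ exists and is nonzero, which is immediate from Proposition \ref{NUMBER}, and that the limit of a ratio is the ratio of limits, which is the standard fact about convergent sequences with nonzero limit denominator. There is no deep content beyond Theorem \ref{ASYMCOUNTING}; the corollary is a normalization bookkeeping statement together with the computation of the asymptotic size of $\mathcal{Y}_n(\F_q)$. I would present it in roughly three lines: cite Theorem \ref{ASYMCOUNTING} for the $q^{-n}$-normalized limit, cite Proposition \ref{NUMBER} for $\lim q^{-n}|\mathcal{Y}_n(\F_q)| = \tfrac{q-1}{q+1}$, and conclude by taking the quotient.
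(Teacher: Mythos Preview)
Your proposal is correct and follows essentially the same argument as the paper: cite Proposition~\ref{NUMBER} to compute $\lim_{n\to\infty} q^{-n}|\mathcal{Y}_n(\F_q)| = \tfrac{q-1}{q+1}$, then divide the limit from Theorem~\ref{ASYMCOUNTING} by this quantity. The paper's proof is exactly this, presented in the same order.
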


\begin{proof}
In Proposition \ref{NUMBER} below, we compute $$|\mathcal{Y}_n(\mathbb{F}_q)| =q^n-2q^{n-1}+2q^{n-2}-\ldots+ (-1)^{n-1}2q+(-1)^n.$$ Hence
\begin{align*}
\lim_{n \to \infty} \frac{|\mathcal{Y}_n(\mathbb{F}_q)|}{q^n} 
&= 1 -\frac{2}{q}+\frac{2}{q^2}-\frac{2}{q^3}+\ldots +\frac{(-1)^k(2)}{q^k}+\ldots = \frac{(q-1)}{(q+1)}
\end{align*}
and the result follows from Theorem \ref{ASYMCOUNTING}. 
\end{proof}

We now prove Theorem \ref{ASYMCOUNTING}. 

\begin{proof}[Proof of Theorem \ref{ASYMCOUNTING}]
	We follow the arguments used by Church--Ellenberg--Farb \cite[Theorem 3.13]{CEFPointCounting}. 
We denote  $ H^{d}({\cM_{BC}}_n (\mathbb{C});\mathbb{C})$ by $\mathcal B_n^d$ as before.
In the proof of Theorem  \ref{TheoremStabilityHyperplanes}, we showed that there is a  function $F_P(d)$ that is independent of $n$ and has order $o(2.99^d)$ such that $|\langle P_n, \mathcal B^d_n \rangle_{B_n}|\leq F_P(d)$. Then $$\left\vert \lim_{m\to\infty}\langle P_m, \mathcal B^d_m \rangle_{B_m}\right\vert \leq F_P(d).$$ 
Let $\epsilon>0$. The series $\displaystyle \sum_{d\geq 0} \frac{F_P(d)}{q^d}$ converges absolutely, so  $$\sum_{d\geq I+1} \frac{F_P(d)}{q^d}<\epsilon/2 \qquad \text{ for  some $I\in\mathbb{N}$. }$$ 
Let  $N = 2I+\deg(P)$. The second author \cite[Cor 5.10]{FIW2} proved that the sequence of character $B_n$--representations $\mathcal B^k_n$ are given by a unique character polynomial of degree $\leq 2d$  for all $n$. Then  Proposition \ref{STABPOL} implies that
$$  \lim_{m \to \infty} \langle P_m, \mathcal B^d_m \rangle_{B_m} = \langle P_n, \mathcal B^d_n \rangle_{B_n}\qquad \text {for $d\leq I$ and n$\geq N$}.$$
From Theorem  \ref{TheoremStabilityHyperplanes}, the series $$\sum_{d=0}^{\infty} \frac{\lim_{m \to \infty} \langle P_m, \mathcal B^d_m \rangle_{B_m} }{(-q)^d}$$ converges  absolutely to a limit $L<\infty$. On the other hand, by Theorem \ref{COUNTING},
	$$ q^{-n}\sum_{f\in \mathcal{Y}_n(\mathbb{F}_q)} P(f)= \sum_{d=0}^n  \frac{ \big\langle P_n, \mathcal B_n^d \big\rangle_{B_n}}{(-q)^{d}}.$$
Therefore, if $n\geq N$
\begin{align*}
	\left|L- q^{-n}\sum_{f\in \mathcal{Y}_n(\mathbb{F}_q)} P(f)\right|  &  =  \left|\sum_{d\geq I+1} \frac{ \lim_{m \to \infty} \langle P_m, \mathcal B^d_m \rangle_{B_m} -\langle P_n, \mathcal B^d_n \rangle_{B_n} }{(-q)^d}\right|\\
	& \leq  \sum_{d\geq I+1} \frac{ F_P(d)+F_P(d) }{q^d}< \epsilon. \qedhere
\end{align*} 
\end{proof}

\begin{rem}{\bf(Convergence $\FIW$--CHA).} \label{FIWCHA}  It is possible to define a type B/C analogue of the concept of a ``$\FI$-complement of hyperplane
arrangement'' introduced by Church--Ellenberg--Farb  \cite[Section 3]{CEFPointCounting} in type A.
A type B/C version of \cite[Theorem 3.7]{CEFPointCounting} holds, and our Theorem \ref{COUNTING} is a particular case.  

The cohomology ring of a $\FIW$--CHA has the structure of an $\FIW$-algebra. By Theorem \ref{TheoremOrlikSolomon}, this  $\FIW$--algebra  is finitely generated by the elements in cohomological degree $1$. Since we would insist that the hyperplane arrangement of a $\FIW$--CHA contain the form $(x_1-x_2)$, its degree--1 cohomology must contain  $M(\Y{2})$  (if type A) or  $M_{BC}(\Y{2},\varnothing)$ (if type B/C) as a sub--$\FIW$--module. Theorem \ref{NOLIMIT} and Remark \ref{RemarkExterior} therefore suggest  that finite generation is likely not enough to guarantee that the corresponding point count stabilize as $n$ grows.  

A result analogous to \cite[Theorem 3.13]{CEFPointCounting} follows for $\FIW$--CHA that satisfy the equivalent conditions in Lemma \ref{EQUIV} for a subexponential function $g$. We could call such hyperplane complements {\it convergent $\FIW$-CHA} as in \cite[Definition 3.12]{CEFPointCounting}.
\end{rem}

\subsection{Examples of statistics on squarefree polynomials}\label{EXPLICIT}  

In this section we use Theorems \ref{COUNTING} and \ref{ASYMCOUNTING}  to compute examples of polynomial statistics on $\mathcal{Y}_n(\F_q)$ by analyzing the cohomology groups $H^d(\cM_{BC_n}(\C); \C)$. We emphasize that it is possible to compute these statistics by more direct methods; these computations are not the first, nor necessarily the most efficient, means of obtaining these results; see for example Alegre--Juarez--Pajela \cite{WEIYANSTUDENTS} for a counting method using generating functions. Instead, the computations in this section serve to illustrate the beautiful and unexpected relationships between the combinatorics of squarefree polynomials in $\F_q[T]$ and the representation theory of $H^d(\cM_{BC_n}(\C); \C)$ that follow from the work of Grothendieck, Deligne, and others. 


We begin by reviewing results due to Douglass \cite{DouglassArrangement} on the structure of the cohomology groups $H^d(\cM_{BC_n}(\C); \C)$ of the complex hyperplane complements in type B/C.

\subsubsection*{Douglass' decomposition of $H^d({\cM_{BC}}_n(\C); \C)$}  \label{DouglassTheorem}


A celebrated result of Lehrer--Solomon \cite{LehrerSolomon} gives a decomposition of the cohomology groups $H^d(\cM_{A_n}(\C); \C)$ of the complex hyperplane complements associated to the symmetric groups. Lehrer and Solomon describe these cohomology groups as a sum of certain induced representations of one-dimensional characters of certain $S_n$ subgroups. Douglass \cite{DouglassArrangement} proves a closely analogous result for the cohomology $ H^d(\cM_{BC_n}(\C); \C)$ of the type $B/C$ hyperplane complements: he proved the $B_n$--representation $ H^d(\cM_{BC_n}(\C); \C)$  decomposes  as a sum of certain induced representations of one-dimensional characters $\zeta_{\y}$ of subgroups $\tY_{\y}$ of $B_n$, described below. 

\begin{thm}{\bf  (Douglass' decomposition of $H^d({\cM_{BC}}_n(\C); \C)$).} \label{DouglassDecomp}  \cite[Formula (1.1)]{DouglassArrangement} \\
	For each $d$ with $0 \leq d \leq n$, there is an isomorphism of $\C B_n$--modules 
	$$  H^d({\cM_{BC}}_n(\C); \C) = \bigoplus_{ \substack{ \y = (\y^+, \y^-), \\ \ell(\y^+)=n-d}}\Ind_{\tY_{\y}}^{B_n} \zeta_{\y}. $$ 
\end{thm}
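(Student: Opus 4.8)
The result is due to Douglass, and the natural plan is to derive it from the equivariant Orlik--Solomon/Brieskorn machinery, running the Lehrer--Solomon argument \cite{LehrerSolomon} in type A while keeping careful track of the extra coordinate hyperplanes $\langle z_i\rangle^\perp$ of the type B/C arrangement. The starting point is the equivariant Brieskorn decomposition for the arrangement $\cA_n$ with its $B_n$--action: there is an isomorphism of $B_n$--representations
\[
H^d(\cM_{BC_n}(\C);\C)\;\cong\;\bigoplus_{[X]}\ \Ind_{\Stab_{B_n}(X)}^{B_n}\,H^{d}\bigl(\cM(\cA_{n,X});\C\bigr),
\]
where $[X]$ ranges over the $B_n$--orbits of rank--$d$ flats $X$ of $\cA_n$ and $\cA_{n,X}$ is the localization (the subarrangement of hyperplanes containing $X$), whose complement has cohomology concentrated in degree $d$. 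This follows from Brieskorn's theorem $H^d(\cM(\cA))\cong\bigoplus_{\codim X=d}H^d(\cM(\cA_X))$ together with the fact that $B_n$ permutes the rank--$d$ flats; it is the equivariant form of \cite[Theorem 2.3 \& Equation (2.2)]{LehrerSolomon}, compatible with Theorem \ref{TheoremOrlikSolomon}.

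Next I would describe the orbits and the localizations combinatorially using the signed partition lattice $\Pi_n^B\cong L(\cA_n)$ of Section \ref{POLCOUNTING}. A rank--$d$ flat corresponds to a signed partition with exactly $n-d$ nonzero blocks; its $B_n$--orbit is determined by the multiset of sizes of those nonzero blocks --- recorded by a partition $\y^+$ with $\ell(\y^+)=n-d$ parts --- together with the number $k_0$ of coordinates in the zero block, so $|\y^+|=n-k_0$. A direct check on the defining equations shows the localization $\cA_{n,X}$ splits as an external product of a braid arrangement of type $A_{b-1}$ for each nonzero block of size $b$, times one braid arrangement of type $B_{k_0}/C_{k_0}$ on the zero--block coordinates, with $\Stab_{B_n}(X)$ the corresponding product $\bigl(\prod_i(S_i\times\langle\tau_i\rangle)\wr S_{m_i(\y^+)}\bigr)\times B_{k_0}$, where $\tau_i$ is the total sign change on a block of size $i$ (which is homotopically trivial on that block's complement) and $m_i$ counts parts of $\y^+$ equal to $i$. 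By the Künneth theorem, $H^d(\cM(\cA_{n,X});\C)$ is the outer tensor product of the top cohomologies of the factors. The type A factor $H^{b-1}(\cM_{A_{b-1}}(\C))$ is $\Ind_{C_b}^{S_b}\zeta_b$ for a cyclic subgroup $C_b$ generated by a $b$--cycle and a faithful linear character $\zeta_b$ by Lehrer--Solomon, and the wreath and $\langle\tau_i\rangle$ factors are absorbed by the standard induced--character computation for wreath products, so each nonzero block ultimately contributes an induction from a small abelian subgroup $\langle C_b,\tau_b\rangle\subset B_b$ with a one--dimensional character.

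The one genuinely new input is the type B/C factor, which I would settle either by applying the same Brieskorn decomposition to $\cA_{k_0}$ and inducting on $k_0$, or directly from the $(\Z/2)^{k_0}$--cover $\cM_{BC_{k_0}}(\C)\to\{(w_1,\dots,w_{k_0}):w_i\ne w_j,\ w_i\ne 0\}$ given by coordinatewise squaring, together with Lehrer--Solomon applied to that ``braid plus coordinate'' arrangement (equivalently, from the $B_{k_0}$--equivariant homology bases for $\Pi_{k_0}^B$). The outcome should be
\[
H^{k_0}\bigl(\cM_{BC_{k_0}}(\C);\C\bigr)\;\cong\;\bigoplus_{\mu\vdash k_0}\ \Ind_{\prod_i C_{2i}\wr S_{m_i(\mu)}}^{B_{k_0}}\psi_\mu,
\]
where $C_{2i}\subset B_i$ is generated by a negative $i$--cycle (of order $2i$) and $\psi_\mu$ is an explicit linear character; the identity $\sum_{\mu\vdash k_0}[B_{k_0}:\prod_i C_{2i}\wr S_{m_i(\mu)}]=(2k_0-1)!!$ agrees with the Poincaré polynomial $\prod_{i=1}^{k_0}(1+(2i-1)t)$ and is a useful check. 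Substituting this and the type A data into the Brieskorn decomposition and using transitivity of induction collapses the double sum (over orbits of flats and over $\mu$) into a single sum over double partitions $\y=(\y^+,\y^-)$ of $n$ with $\ell(\y^+)=n-d$, where $\y^-=\mu$ records the zero block, $\y^+$ records the nonzero blocks, and $\tY_\y$, $\zeta_\y$ are the assembled subgroup and one--dimensional character --- which is exactly Douglass' formula. I expect the main obstacle to be precisely this type B/C top--cohomology decomposition, together with the bookkeeping of the Künneth degree shifts and the sign twists in the linear characters $\zeta_b$ and $\psi_\mu$: reconciling the assembled character with Douglass' $\zeta_\y$ on the nose takes care, though it requires no genuinely new idea.
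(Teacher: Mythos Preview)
The paper does not prove this theorem. It is stated as a result of Douglass with a citation to \cite[Formula (1.1)]{DouglassArrangement}, followed only by the definitions of the subgroups $\tY_{\y}$ and characters $\zeta_{\y}$ needed to parse the statement and to carry out the computations in Lemmas \ref{X-Y} and \ref{INNER}. No argument is given or needed in the paper.

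Your proposal goes well beyond this: you sketch an actual proof via the equivariant Brieskorn decomposition, the identification of $B_n$--orbits of flats with the signed partition lattice, K\"unneth for the localized arrangements, and the Lehrer--Solomon input on the type A factors, with an inductive or covering-space argument for the residual type B/C factor on the zero block. This is broadly the strategy Douglass himself uses, and your outline is coherent, though the final reconciliation of the assembled linear character with the specific $\zeta_{\y}$ described in the paper (in particular the sign conventions on $\tH_{\y}$ and the precise primitive root $\eta_{\y_i}=(-1)^{\y_i-1}e^{2\pi I/\y_i}$) would require more bookkeeping than you indicate. For the purposes of this paper, however, a citation is all that is expected.
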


The groups $\tY_{\y}$ and their characters  $\zeta_{\y}$ are defined as follows. 
Consider a double partition $\y=(\y^+, \y^-)$ of $n$ with $$\y^+ = (\y_1, \y_2, \ldots, \y_a) \quad \text{ and } \quad \y^- = (\y_{a+1}, \ldots, \y_{a+b}).$$ Let $B_{\y_1} \times \cdots \times  B_{\y_{a+b}}$ be the corresponding subgroup of $B_n$, where  $B_{\y_i}$ denotes the signed permutation group on the letters $$\Omega_i := \{ \pm( 1+ \sum_{ j<i} \y_j), \ldots, \pm(\y_i +  \sum_{ j<i} \y_j )\}.$$ 
For each factor $B_{\y_i}$, let $x_i$ denote the longest element, that is, the central element that acts by multiplication by $-1$. If $\y_i > 1$, then let $y_i$ denote a positive $\y_i$--cycle in $B_{\y_i}$. Let $\tY_{i} := \langle x_i, y_i \rangle$. Example \ref{SamplePartition} shows these groups in the case that $\y = \left( (3, 1, 1), (2, 2, 1) \right)$.

\begin{example}{\bf (The summand of  $H^7({\cM_{BC}}_{10}(\C); \C)$ indexed by $\y = \left( (3, 1, 1), (2, 2, 1) \right).$)}  \label{SamplePartition} As an example, given the double partition $\y = \left( (3, 1, 1), (2, 2, 1) \right)$ we take the subgroup $$B_3 \times B_1 \times B_1 \times B_2 \times B_2 \times B_1 \subseteq B_{10}.$$ Then the elements $x_i$ and $y_i$ are shown in the following table. 

	\begin{footnotesize}

\begin{align*}
{i} && {\Omega_i} && {x_i} && {y_i} & \qquad \\[.5em] \hline \\ 
1 && \{1, \oo1, 2, \oo2, 3, \oo3 \} && (\oo{1}\; 1)(\oo{2} \; 2)(\oo{3} \; 3) && (1 \; 2 \;3)(\oo{1} \; \oo{2} \; \oo{3}) \\ 
2 && \{4, \oo4 \} &&  (\oo{4} \; 4 ) && \\
3 && \{ 5, \oo5\} && (\oo{5} \; 5) && \\
4 && \{ 6, \oo6, 7, \oo7 \} && (\oo{6} \; 6)(\oo{7} \; 7) && (6 \; 7)(\oo{6} \; \oo{7}) \\ 
5 && \{ 8, \oo8, 9, \oo9 \} && (\oo{8} \; 8)(\oo{9} \; 9) && (8 \; 9)(\oo{8} \; \oo{9}) \\ 
6  && \{ 10, \oo{10} \} && (\oo{10} \; 10) && \\
\end{align*} 

	\end{footnotesize}
	\end{example}

Let $n_r(\y^+)$ denote the number of parts of $\y^+$ of size $r$, and similarly $n_r(\y^-)$. Let $\tH_{\y}$ denote the product of symmetric groups that permutes parts of the same size in each partition: $$ \tH_{\y} := S_{n_{\y_1}(\y^+)} \times \cdots \times S_{n_1(\y^+)} \times  S_{n_{\y_{a+1}}(\y^-)} \times \cdots \times S_{n_1(\y^-)}.$$

{\noindent \bf Example \ref{SamplePartition} continued.} Given $\y = \left( (3, 1, 1), (2, 2, 1) \right)$, we have $$\tH_{\y} = \left\langle (4 \; 5)(\oo{4} \; \oo{5}), (6 \; 8)( \oo{6} \; \oo{8})(7 \;9) (\oo{7} \; \oo{9}) \right\rangle  \cong S_2 \times S_2 .$$

Define $$\tY_{\y} := \tH_{\y} ( \tY_1 \times \cdots \times \tY_{a+b} ).$$ 
We define a linear character $\zeta_{\y}$ of $\tY_{\y} $ by its restrictions to the subgroups  $ \tY_1 \times \cdots \times \tY_{a+b}$ and $ \tH_{\y}$, as follows 
\begin{align*}
\zeta_{\y} :  \tY_1 \times \cdots \times \tY_{a+b} & \longrightarrow \C \\ 
y_i & \longmapsto \eta_{\y_i} := (-1)^{\y_i -1} e^{\frac{2\pi I}{\y_i}}  \qquad \text{ $I$ denotes a root of $-1$ } \\
x_i & \longmapsto 1
\end{align*}

For each symmetric group factor of $\tH_{\y}$, the character acts as either the alternating representation or the trivial representation, depending on whether the factor is permuting parts of $\y^+$ or parts of $\y^-$, and whether the corresponding parts are even or odd. 
\begin{align*}
 \zeta_{\y} : \tH_{\y}  &\longrightarrow \C \\ 
 S_{n_{\y_i}(\y^+)} \ni \s & \longmapsto \mathrm{sign}(\s) && \text{if $\y_i$ is even} \\ 
 S_{n_{\y_i}(\y^+)} \ni \s & \longmapsto 1 && \text{if $\y_i$ is odd} \\ 
 S_{n_{\y_i}(\y^-)} \ni \s  &\longmapsto \mathrm{sign}(\s) && \text{if $\y_i$ is odd} \\ 
 S_{n_{\y_i}(\y^-)} \ni \s  &\longmapsto 1 && \text{if $\y_i$ is even.} \\
\end{align*}

{\noindent \bf Example \ref{SamplePartition} completed.} For the partition $\y = \left( (3, 1, 1), (2, 2, 1) \right)$, the representation $\zeta_{\y}$ is defined as follows. 

	\begin{footnotesize}

\begin{align*}
{x_i} && {y_i} && \tH_{\y} & \qquad \\[.5em] \hline \\ 
 (\oo{1}\; 1)(\oo{2} \; 2)(\oo{3} \; 3) \longmapsto 1 && (1 \; 2 \;3)(\oo{1} \; \oo{2} \; \oo{3}) \longmapsto \eta_3 = e^\frac{2 \pi I}{3} \\ 
  (\oo{4} \; 4 )\longmapsto 1  &&  && (4 \; 5)(\oo{4} \; \oo{5}) \longmapsto 1\\
 (\oo{5} \; 5) \longmapsto 1 && \\
 (\oo{6} \; 6)(\oo{7} \; 7) \longmapsto 1  && (6 \; 7)(\oo{6} \; \oo{7})  \longmapsto \eta_2 = 1 &&  (6 \; 8)( \oo{6} \; \oo{8})(7 \;9) (\oo{7} \; \oo{9}) \longmapsto 1 \\ 
 (\oo{8} \; 8)(\oo{9} \; 9) \longmapsto 1  && (8 \; 9)(\oo{8} \; \oo{9})  \longmapsto \eta_2 = 1\\ 
 (\oo{10} \; 10) \longmapsto 1  && \\
\end{align*} 

	\end{footnotesize}

\subsubsection*{The character of $H^{d} ( {\cM_{BC}}_n (\C) ; \C)$} 

Church--Ellenberg--Farb use Lehrer--Solomon's decomposition of the cohomology of the braid arrangement to compute statistics on the space of monic squarefree polynomials \cite[Propositoin 4.5]{CEFPointCounting}. We can similarly use Douglass's result  to perform computations on $H^{d} ( {\cM_{BC}}_n (\C) ; \C)$. 

Observe that, given a $B_n$--representation $V$ with character $\chi$, 
\begin{align*}
\langle \chi, H^d  ( {\cM_{BC}}_n (\C); \C) \rangle_{B_n} =   \sum_{ \substack{ \y = (\y^+, \y^-), \\ \ell(\y^+)=n-d}} \left\langle \chi, \Ind_{\tY_{\y}}^{B_n} \zeta_{\y} \right\rangle_{B_n} \\ 
=  \sum_{ \substack{ \y = (\y^+, \y^-), \\ \ell(\y^+)=n-d}} \left\langle \Res_{\tY_{\y}}^{B_n} \chi,  \zeta_{\y} \right\rangle_{\tY_{\y}}
\end{align*}
Because the characters $\zeta_{\y}$ are 1-dimensional, computing this inner product is simply a matter of counting the dimension of the subspace of $V$ on which $\tY_{\y}$ acts by $\zeta_{\y}$. We will use this observation to prove Lemmas \ref{X-Y} and \ref{INNER} below.

\subsubsection*{Some statistics on $\mathcal{Y}_n(\mathbb{F}_q)$}

\begin{prop}{\bf (The number of $\mathbb{F}_q$--points in $\mathcal{Y}_n(\mathbb{F}_q).$)}\label{NUMBER} Let $q$ be an odd prime power. The number of degree $n$ monic squarefree polynomials in $\mathbb{F}_q[T]$ with nonzero constant term is 

$$|\mathcal{Y}_n(\mathbb{F}_q)|=q^n-2q^{n-1}+2q^{n-2}-\ldots+ (-1)^{n-1}2q+(-1)^n.$$

\end{prop}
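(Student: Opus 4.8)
The plan is to feed the constant class function $\chi = 1$ into the point-counting formula, Theorem \ref{COUNTING}. This gives
$$|\mathcal{Y}_n(\mathbb{F}_q)| = \sum_{f \in \mathcal{Y}_n(\mathbb{F}_q)} 1 = \sum_{d=0}^n (-1)^d q^{n-d}\,\big\langle 1, H^d(\cM_{BC_n}(\C);\C)\big\rangle_{B_n},$$
so the whole problem reduces to computing, for each $d$ with $0 \le d \le n$, the multiplicity $\big\langle 1, H^d(\cM_{BC_n}(\C);\C)\big\rangle_{B_n} = \dim_\C H^d(\cM_{BC_n}(\C);\C)^{B_n}$ of the trivial $B_n$--representation. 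I claim this multiplicity equals $1$ when $d = 0$ or $d = n$, and equals $2$ when $1 \le d \le n-1$; substituting these values into the displayed sum immediately produces the asserted formula (with the understanding that for $n=1$ the alternating sum degenerates to $q-1$, since the range $1 \le d \le n-1$ is then empty).

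To extract the multiplicities I would apply Douglass's decomposition, Theorem \ref{DouglassDecomp}. Because each character $\zeta_{\y}$ is one--dimensional, Frobenius reciprocity gives $\big\langle 1, \Ind_{\tY_{\y}}^{B_n}\zeta_{\y}\big\rangle_{B_n} = \big\langle 1, \zeta_{\y}\big\rangle_{\tY_{\y}}$, which is $1$ if $\zeta_{\y}$ is trivial and $0$ otherwise. Hence
$$\big\langle 1, H^d(\cM_{BC_n}(\C);\C)\big\rangle_{B_n} = \#\big\{\,\y = (\y^+,\y^-)\ :\ \ell(\y^+) = n-d\ \text{ and }\ \zeta_{\y}\ \text{is trivial}\,\big\}.$$
So the real content is to pin down which double partitions $\y$ make $\zeta_{\y}$ the trivial character of $\tY_{\y}$. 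From the defining value $\zeta_{\y}(y_i) = \eta_{\y_i} = (-1)^{\y_i-1}e^{2\pi I/\y_i}$ one checks that $\eta_r = 1$ precisely for $r \in \{1,2\}$ (for $r \ge 3$ it is a nontrivial root of unity), so $\zeta_{\y}$ can be trivial only when every part of $\y^+$ and of $\y^-$ has size $1$ or $2$. Then one inspects the restriction of $\zeta_{\y}$ to the symmetric-group factors $\tH_{\y}$, where $\zeta_{\y}$ acts by $\mathrm{sign}$ on $S_{n_r(\y^+)}$ for $r$ even and on $S_{n_r(\y^-)}$ for $r$ odd: triviality forces $n_2(\y^+) \le 1$ and $n_1(\y^-) \le 1$. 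Thus $\zeta_{\y}$ is trivial exactly for $\y^+ = (2^{\varepsilon},1^{a})$ and $\y^- = (2^{b},1^{\delta})$ with $\varepsilon,\delta \in \{0,1\}$ and $a,b \ge 0$.

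It then remains to count such $\y$ subject to $\ell(\y^+) = \varepsilon + a = n-d$ and $|\y^+| + |\y^-| = (2\varepsilon+a) + (2b+\delta) = n$; subtracting shows these are equivalent to $\varepsilon + a = n - d$ and $\varepsilon + 2b + \delta = d$. A short parity case analysis over $\varepsilon, \delta \in \{0,1\}$ (watching the nonnegativity constraints $a = n-d-\varepsilon \ge 0$ and $b = (d-\varepsilon-\delta)/2 \ge 0$) yields a unique solution when $d = 0$ (namely $\y = ((1^n),\varnothing)$) and when $d = n$ (namely $\y = (\varnothing,(2^{n/2}))$ or $(\varnothing,(2^{(n-1)/2},1))$ according to the parity of $n$), and exactly two solutions for every $d$ with $1 \le d \le n-1$. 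This establishes the claimed Betti numbers and finishes the proof. The step I expect to be the main obstacle is the triviality analysis of $\zeta_{\y}$ on the factors $\tH_{\y}$ together with the boundary cases $d \in \{0,n\}$ of the final enumeration; the rest is bookkeeping. (As a sanity check and an alternative, the quotient $\cM_{BC_n}(\C)/B_n$ has Poincar\'e polynomial $(1+t)(1+t+\cdots+t^{n-1})$ by Brieskorn \cite[Th\'eor\`eme 7]{Brieskorn}, which could be substituted directly into Theorem \ref{COUNTING} in place of the Douglass computation.)
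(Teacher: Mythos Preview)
Your proof is correct and follows the same overall strategy as the paper's: plug the trivial character $\chi=1$ into Theorem \ref{COUNTING} and compute the multiplicities $\langle 1, H^d(\cM_{BC_n}(\C);\C)\rangle_{B_n}$. The only difference is in how these multiplicities are obtained. The paper simply observes that $\langle 1, H^d\rangle_{B_n}=\dim_{\C} H^d(\cY_n(\C);\C)$ is the $d^{\text{th}}$ Betti number of the quotient and cites Brieskorn \cite[Th\'eor\`eme 7]{Brieskorn} for the values $b_0=b_n=1$, $b_d=2$ for $0<d<n$.

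You instead rederive these numbers from Douglass's decomposition (Theorem \ref{DouglassDecomp}) by classifying the double partitions $\y$ with $\zeta_{\y}$ trivial; this is exactly the analysis the paper carries out elsewhere (e.g.\ in the proofs of Lemmas \ref{X-Y} and \ref{INNER}), so it fits naturally and is entirely self-contained within the paper's toolkit. Your triviality analysis and final enumeration are accurate. What the paper's route buys is brevity; what yours buys is an independent verification of Brieskorn's Betti numbers in type B/C via Douglass's theorem. Since you mention the Brieskorn shortcut at the end anyway, there is nothing to fix.
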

\begin{proof}
If we consider the class function $\chi\equiv 1$,  then Formula (\ref{COUNT}) implies that the number of $\mathbb{F}_q$--points of $\mathcal{Y}_n$ is
$$|\mathcal{Y}_n(\mathbb{F}_q)|=\sum_{d=0}^n  (-1)^{d}q^{n-d} \langle 1, H^d  ( \cM_{BC} ; \C) \rangle_{B_n}.$$
	But
	$$\langle 1, H^d( \cM_{BC} ; \C) \rangle_{B_n} =\dim_\C \big(H^d  ( \mathcal{Y}_n(\C); \C)\big)=b_d;$$
	 the $d$th-Betti number of $\mathcal{Y}_n(\C)$. 
	 Brieskorn \cite[Th\'eor\`eme 7]{Brieskorn} determined these Betti numbers to be $$ b_0 = b_n = 1 \qquad \text{ and } \qquad b_d = 2 \quad \text{for $0 < d < n$,}$$ which gives us the desired formula. 
\end{proof}

There are, of course, methods for counting the polynomials in $\mathcal{Y}_n(\mathbb{F}_q)$ directly. Notably, using Formula (\ref{COUNT}), such a count would give a combinatorial proof of the Betti numbers of $\mathcal{Y}_n(\C)$, and recover Brieskorn's result. 

We next use Douglass' result to find the stable values of the inner products  $\langle \chi, H^d ( \cM_{BC} ; \C) \rangle_{B_n}$ when $\chi$ is given by the character polynomials $X_1-Y_1$ or $X_1+Y_1$. We will use these stable values in Propositions \ref{NUMLINEAR} and \ref{NUMQR} to compute further asymptotic statistics on the spaces $\mathcal{Y}_n(\mathbb{F}_q)$.

\begin{lem}\label{X-Y}  {\bf ($\langle X_1 - Y_1, H^d  ( {\cM_{BC}}_n(\C) ; \C) \rangle_{B_n}$ vanishes).} The inner product $$\langle X_1 - Y_1, H^d  ( {\cM_{BC}}_n(\C) ; \C) \rangle_{B_n} = 0 \qquad \text{ for all $n$  and all $d$. }$$
\end{lem}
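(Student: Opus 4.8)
The plan is to use Douglass' decomposition (Theorem \ref{DouglassDecomp}) to reduce the inner product to a counting problem, and then to exhibit a sign-reversing involution on the set being counted. Recall that
$$\langle X_1 - Y_1, H^d({\cM_{BC}}_n(\C);\C)\rangle_{B_n} = \sum_{\substack{\y=(\y^+,\y^-) \\ \ell(\y^+)=n-d}} \Big( \langle \Res_{\tY_\y}^{B_n} X_1, \zeta_\y\rangle_{\tY_\y} - \langle \Res_{\tY_\y}^{B_n} Y_1, \zeta_\y\rangle_{\tY_\y} \Big),$$
so it suffices to show that, summed over the relevant double partitions, the contributions of $X_1$ (counting positive $1$--cycles) exactly cancel the contributions of $Y_1$ (counting negative $1$--cycles). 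First I would work out, using the explicit description of $\tY_\y$, $\zeta_\y$, and the formula for $\Ind$--$\Res$ of a class function, exactly what $\langle \Res_{\tY_\y}^{B_n} X_1, \zeta_\y\rangle$ and $\langle \Res_{\tY_\y}^{B_n} Y_1, \zeta_\y\rangle$ count: these should be (weighted) counts of ways a positive, respectively negative, $1$--cycle can sit inside a $B_n$--conjugate of $\tY_\y$ compatibly with $\zeta_\y$. Since $X_1$ and $Y_1$ are supported on signed permutations whose underlying $S_n$--permutation has a fixed point, the relevant cycles interact only with the size--$1$ parts of $\y^+$ and $\y^-$.

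The key structural point is the following: a size--$1$ part of $\y^+$ corresponds to a factor $B_1 = \{1, x_i\}$ with $\zeta_\y$ sending $x_i \mapsto 1$, so the single nontrivial element of that factor is a negative $1$--cycle on which $\zeta_\y$ is trivial; a size--$1$ part of $\y^-$ corresponds to a factor $B_1$ on which $\zeta_\y$ sends $x_i \mapsto -1$. Meanwhile $\tH_\y$ acts on the size--$1$ parts of $\y^+$ by the trivial representation and on the size--$1$ parts of $\y^-$ by the sign representation. I would pair up double partitions $\y = (\y^+, \y^-)$ by moving a single size--$1$ part from $\y^+$ to $\y^-$ or vice versa — this is a bijection between double partitions with $\ell(\y^+)=n-d$ having at least one part of size $1$ in $\y^+$ and... wait — moving a part of $\y^+$ to $\y^-$ changes $\ell(\y^+)$, hence changes $d$. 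So instead the involution must be internal to a fixed $\y$: within a fixed conjugate of $\tY_\y$, I would match each positive $1$--cycle contributing to $\langle \Res X_1, \zeta_\y\rangle$ with a corresponding negative $1$--cycle contributing (with opposite sign, because of the sign character on the $\y^-$ side) to $\langle \Res Y_1, \zeta_\y\rangle$, using the fact that positive and negative $1$--cycles in $B_1$ play symmetric roles once we track how $\zeta_\y$ and $\tH_\y$ act. Concretely I expect the computation to show $\langle \Res_{\tY_\y}^{B_n} X_1, \zeta_\y\rangle = \langle \Res_{\tY_\y}^{B_n} Y_1, \zeta_\y\rangle$ for \emph{every} individual $\y$, which is even stronger than term-by-term cancellation in the sum.

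The cleanest route may actually avoid the combinatorics entirely: note that $X_1 - Y_1$ is, up to sign, a very structured class function — for $\sigma \in B_n$, $X_1(\sigma) - Y_1(\sigma) = \sum_{i} \epsilon_i(\sigma)$ summed over the fixed points $i$ of the underlying permutation, where $\epsilon_i(\sigma) = +1$ if $\sigma$ acts on $\{i, \oo i\}$ trivially and $-1$ if it swaps them. Equivalently, $X_1 - Y_1 = \operatorname{tr}$ of $\sigma$ acting on the $n$--dimensional permutation-type representation $W_n = \k^n$ where $B_n$ acts by signed permutation matrices (the ``standard'' $B_n$--representation, $M_{BC}(\Y{1},\varnothing)$), since the trace of a signed permutation matrix is precisely (number of $+1$ fixed entries) $-$ (number of $-1$ fixed entries). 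So $\langle X_1 - Y_1, H^d \rangle_{B_n} = \dim \big( W_n \otimes H^d({\cM_{BC}}_n(\C);\C) \big)^{B_n} = \langle W_n, H^d \rangle_{B_n}$, and I would show this vanishes using the description of $H^*({\cM_{BC}}_n(\C);\C)$ as an Orlik--Solomon algebra (Theorem \ref{TheoremOrlikSolomon}): its degree--$1$ part, and more relevantly its structure as a $B_n$--representation, never contains the standard representation $W_n = M_{BC}(\Y{1},\varnothing)$ as a constituent — by Douglass' theorem each $H^d$ is a sum of $\Ind_{\tY_\y}^{B_n}\zeta_\y$, and one checks $\langle \Res_{\tY_\y}^{B_n} W_n, \zeta_\y \rangle_{\tY_\y} = 0$ because $W_n$ restricted to $\tY_\y$ decomposes along the orbits $\Omega_i$, and on each $\Omega_i$ the relevant $\zeta_\y$--isotypic computation gives zero (the trace of $y_i$, a positive $\y_i$--cycle, against $\eta_{\y_i} = (-1)^{\y_i-1}e^{2\pi I/\y_i}$, integrates to $0$; and the $x_i \mapsto 1$ constraint kills the remaining piece). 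The main obstacle will be this last verification — showing cleanly that no $\zeta_\y$ pairs nontrivially with $\Res_{\tY_\y}^{B_n}W_n$ — but it is a finite, orbit-by-orbit character computation on the small groups $\tY_i = \langle x_i, y_i\rangle$, and I expect it to fall out quickly once the decomposition of $W_n|_{\tY_\y}$ into $\Omega_i$--pieces is written down.
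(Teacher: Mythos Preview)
Your ``cleanest route'' is exactly the paper's approach: identify $X_1 - Y_1$ as the character of the standard $n$-dimensional signed permutation representation $V_n \cong \C^n$ (this is the irreducible $V_{((n-1),(1))}$, not $M_{BC}(\Y{1},\varnothing)_n$, which is $2n$-dimensional --- a harmless notational slip), then use Douglass' decomposition and Frobenius reciprocity to reduce to showing $\langle \Res^{B_n}_{\tY_\y} V_n, \zeta_\y\rangle_{\tY_\y} = 0$ for every $\y$.

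The paper's execution of this last step is considerably simpler than the orbit-by-orbit character computation you outline. You need only one observation: the product $\prod_i x_i \in \tY_\y$ is the central element $-\mathrm{Id} \in B_n$, which acts on \emph{all} of $V_n = \C^n$ by the scalar $-1$; but $\zeta_\y$ sends every $x_i$ to $1$, hence sends $\prod_i x_i$ to $1$. Thus no nonzero vector in $V_n$ can lie in the $\zeta_\y$-isotypic subspace, and the inner product vanishes immediately --- no analysis of $y_i$ or of $\tH_\y$ is needed. Your plan would have gotten there, but this single global observation replaces the whole orbit decomposition.
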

\begin{proof}
	The character $X_1 -Y_1$ corresponds to the $B_n$--representation $$V_n \cong V_{\left( (n-1), (1) \right)},  $$ the canonical representation of $B_n$ on $V_n  \cong \C^n$ by signed permutation matrices. To compute the values of the inner product we use the Douglass' result Theorme \ref{DouglassDecomp}. In the notation of Section \ref{DouglassTheorem}, for each partition $\lambda$ the associated product $\prod {x_i}$ is the signed permutation $-$Id. Given a vector $v \in \C^n$, this matrix acts on $v$ by $-1$, while the representation $\zeta_{\y} : \prod {x_i} \to 1$ acts on $v$ trivially, and so $v$ cannot be a copy of the representation $\zeta_{\y}$. We conclude that this inner product is identically zero. 
\end{proof}

\begin{lem}\label{INNER}  {\bf (Stable values of $\left\langle X_1 + Y_1; H^0  ( {\cM_{BC}}_n(\C) ; \C) \right\rangle_{B_n}$).}
The inner product of $ H^d  ( \cM_{BC}(\C) ; \C)$ with the character polynomial $X_1 + Y_1$ has the following stable values 
	\begin{align*}
	\left\langle X_1 + Y_1; H^0  ( {\cM_{BC}}_n(\C) ; \C) \right\rangle_{B_n} &  = 1  \qquad  \text{ for all $n \geq 1$},
	\\ \left\langle X_1 + Y_1; H^d ( {\cM_{BC}}_n(\C) ; \C) \right\rangle_{B_n} & = 4d \qquad  
	\text{ for all $d \geq 1$ and $n \geq d+2$}. 
	\end{align*}\end{lem}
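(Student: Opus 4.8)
The strategy is to combine Douglass' decomposition (Theorem~\ref{DouglassDecomp}) with Frobenius reciprocity, in the manner of the discussion preceding the lemma (and of the proof of Lemma~\ref{X-Y}). First I would identify the class function $X_1+Y_1$: by Definition~\ref{DefnHypCharPoly}, $X_1(\s)+Y_1(\s)$ is the number of indices $i\in[n]$ with $\s(\{i,\oo{i}\})=\{i,\oo{i}\}$, i.e.\ the number of fixed points of the image of $\s$ under the projection $B_n\twoheadrightarrow S_n$. Hence $X_1+Y_1$ is the character of the permutation module $M:=\C^n$ on which $B_n$ acts through $B_n\twoheadrightarrow S_n$ by permuting coordinates (so $M\cong V_{((n),\varnothing)}\oplus V_{((n-1,1),\varnothing)}$). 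Note the contrast with Lemma~\ref{X-Y}: the quick argument there, that $\prod_i x_i$ acts by $-\mathrm{Id}$, fails here because $\prod_i x_i$ acts trivially on $M$, so a genuine count is needed. Writing $M^{\zeta_\y}$ for the subspace of $M$ on which $\tY_\y$ acts by the linear character $\zeta_\y$, Douglass' decomposition and Frobenius reciprocity give
\[
\big\langle X_1+Y_1,\; H^d({\cM_{BC}}_n(\C);\C)\big\rangle_{B_n}
= \sum_{\substack{\y=(\y^+,\y^-)\\ \ell(\y^+)=n-d}} \dim_\C M^{\zeta_\y}.
\]

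The main computation is to evaluate $\dim_\C M^{\zeta_\y}$. Because the $B_n$-action on $M$ factors through $S_n$, each central involution $x_i$ acts trivially (consistent with $\zeta_\y(x_i)=1$); the positive $\y_i$-cycle $y_i$ acts as a cyclic permutation of the $\y_i$ coordinates of block $i$ and fixes all others; and $\tH_\y$ acts by permuting blocks of equal size within each of $\y^+$, $\y^-$. I would first record that $\eta_s:=(-1)^{s-1}e^{2\pi I/s}$ equals $1$ for $s\in\{1,2\}$ and is $\neq1$ for $s\ge 3$, but is always an $s$-th root of unity, hence occurs in the cyclic-shift action on an $s$-element block with multiplicity exactly one; and that the permutation representation of $S_m$ on $m$ points contains the trivial character once and the sign character once if $m\le 2$, not at all if $m\ge 3$. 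From these facts:
\begin{itemize}
\item If $\y$ has a part of size $\ge 3$, any $v\in M^{\zeta_\y}$ must be supported on one such block, so $M^{\zeta_\y}=0$ whenever $\y$ has two parts of size $\ge 3$.
\item If $\y$ has exactly one part of size $\ge 3$, then $\dim_\C M^{\zeta_\y}=1$ when $\zeta_\y|_{\tH_\y}$ is trivial (equivalently $n_2(\y^+)\le 1$ and $n_1(\y^-)\le1$), and $0$ otherwise.
\item If every part of $\y$ has size $\le 2$, the $\tY_i$-conditions cut out one line per block, on which $\tH_\y$ acts by the corresponding permutation representations; extracting the $\zeta_\y|_{\tH_\y}$-isotypic component yields $\dim_\C M^{\zeta_\y}$ equal to the number of pairs $(s,\epsilon)\in\{1,2\}\times\{+,-\}$ with $n_s(\y^\epsilon)\ge1$ when $n_2(\y^+)\le1$ and $n_1(\y^-)\le1$; equal to $1$ when exactly one of $n_2(\y^+),n_1(\y^-)$ equals $2$ (the other $\le1$); and $0$ otherwise.
\end{itemize}

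For $d=0$ the only double partition with $\ell(\y^+)=n$ is $\y=\big((1^n),\varnothing\big)$, and then $\tY_\y=\tH_\y$ acts on $M$ through $S_n$ with invariants spanned by $e_1+\cdots+e_n$, so $\dim_\C M^{\zeta_\y}=1$ and $\langle X_1+Y_1,H^0\rangle_{B_n}=1$ for all $n\ge1$. For $d\ge1$ I would enumerate the contributing $\y$: since $\ell(\y^+)=n-d$, the partition $\y^+$ is recovered from a partition of the excess $e:=|\y^+|-(n-d)\in\{0,\dots,d\}$ by padding with parts equal to $1$, and $\y^-$ is an arbitrary partition of $d-e$; for $n\ge d+2$ every relevant $\y^+$ has enough parts and the resulting inner products no longer depend on $n$ (this is exactly the stable range in the statement). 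Sorting the sum by $e$ and by the parity of $d-e$, and splitting into the three cases (all parts $\le 2$), (the unique part $\ge 3$ lies in $\y^+$), (the unique part $\ge 3$ lies in $\y^-$), one finds that for $d\ge3$ these contribute $8$, $2d-3$, and $2d-5$ respectively, summing to $4d$; the cases $d=1,2$ are checked directly to also give $4d$.

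The step I expect to be the main obstacle is this last enumeration. It is delicate because $\dim_\C M^{\zeta_\y}$ is not identically $1$ on the contributing partitions — it takes the values $2,3,4$ when $\zeta_\y|_{\tH_\y}$ is trivial and several block-types occur — and the constraints $n_2(\y^+)\le 1$ (or $\le 2$) and $n_1(\y^-)\le 1$ (or $\le 2$) interact with the parity of $d-e$ through the requirement that $\y^-$ be an honest partition of $d-e$. Organizing the bookkeeping by excess and by parity, as indicated above, is what keeps the count tractable.
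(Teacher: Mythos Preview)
Your proposal is correct and follows essentially the same approach as the paper: both use Douglass' decomposition together with Frobenius reciprocity to reduce to computing $\dim_{\C} M^{\zeta_\y}$ for $M=\C^n$ the pulled-back $S_n$--permutation module, and then enumerate the contributing double partitions $\y$ with $\ell(\y^+)=n-d$. The only difference is organizational: the paper tabulates the contributing $\y$ directly (for odd $d$, with the even case declared similar), whereas you stratify by the number of parts of size $\geq 3$ and by the excess $e=|\y^+|-(n-d)$, arriving at the same total $4d$; your packaging avoids the even/odd split and makes the bookkeeping a bit more transparent, but the substance is identical.
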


	\begin{proof}
	The characters $ X_1 + Y_1$ correspond to the $B_n$ representation 
	$$V_n  = V_{\left( (n), \varnothing \right)} \oplus  V_{\left( (n-1,1), \varnothing \right)}$$ 
	pulled back from the canonical permutation representation of the symmetric group $S_n$ on $\C^n$ by permuting the $n$ basis elements $e_1, e_2, \ldots, e_n.$  
	Lemma \ref{INNER} can be proved using Douglass' decomposition and (in the notation of Section \ref{DouglassTheorem}) finding for each $\y$ with $\ell(\y^+)=n-d$ the dimension of the subspace of $V_n$ on which $\tY_{\y}$ acts by $\zeta_{\y}$. We summarize this computation for odd homological degrees $d$ in Table \ref{Table-dOdd}. In this table, the notation $\tau_j$ represents the partition 
	$$ \tau_j = \left\{ 
  \begin{array}{l l}
    (2^{\frac{j}{2}}) & \quad \text{if $j$ is even}\\
    (2^{\frac{j-1}{2}},1)  & \quad \text{ if $j$ is odd.}
  \end{array} \right.$$  
	
	\begin{table}[h!]  \caption{Stable values of $\left\langle X_1 + Y_1, H^d ( {\cM_{BC}}_n(\C) ; \C) \right\rangle_{B_n}$ for $d$ odd. The ``action" column shows the representation evaluated on all elements of $\tY_{\y}$ that act nontrivially.}  \begin{center} \label{Table-dOdd}  { \scriptsize
	\begin{tabular}{|c|c|c|c|} \hline 
	&&& \\
		$ \y$ & Basis in $\C^n$ & Action & $ \langle V_n ; \eta_{\y} \rangle_{\tY_{\y}} $ \\ 
		\hline &&& \\
		
		$\left( (1^{(n-d)}), (2^{\frac{d-1}{2}},1) \right)$ & $e_1 +e_2 + \cdots +e_{n-d}$  & Trivial &  Total: 3 \\
		& $e_{n-d+1} +e_2 + \cdots + e_{n-1}$  & Trivial &  \\
		& $e_n$  & Trivial &  \\
		&&& \\
		
		$\left( (2, 1^{(n-d-1)}), (2^{\frac{d-1}{2}}) \right)$ & $e_1 +e_2 $  & Trivial &  Total: 3 \\
		& $e_{3} +e_4 + \cdots + e_{n-d+1}$  & Trivial &  \\
		& $e_{n-d+2} + \cdots + e_{n}$  & Trivial &  \\
		&&& \\
		
		$\left( (j, 1^{(n-d-1)}), \tau_{(d-j+1)} \right)$, & $e_1 + \eta_{j} e_2 + \cdots + \eta_{j}^{j-1} e_{j}$  & $y_1 \longmapsto \eta_{j}$ & Total : $d-1$ \\ 
		$3 \leq j \leq d+1$ &&& \\ &&& \\
		
		$\left( ( j,2,1^{(n-d-2)}), \tau_{(d-j)} \right)$, & $e_1 + \eta_{j} e_2 + \cdots + \eta_{j}^{j-1} e_{j}$  & $y_1 \longmapsto \eta_{j}$ &Total : $d-2$\\ 
		$3 \leq j \leq d$ &&&  \\ &&& \\
		
		$\left( (1^{(n-d)}), (j,  \tau_{(d-j)}) \right)$, & $e_{n-d+1} + \eta_{j} e_{n-d+2} + \cdots + \eta_{j}^{j-1} e_{{n-d+j}}$  & $y_{a+1} \longmapsto \eta_{j}$ & Total : $d-2$ \\ 
		$3 \leq j \leq d$ &&&  \\ &&& \\
		
		$\left( (2,1^{(n-d-1)}), (j,  \tau_{(d-j-1)}) \right)$, & $e_{n-d+1} + \eta_{j} e_{n-d+2} + \cdots + \eta_{j}^{j-1} e_{{n-d+j}}$  & $y_{a+1} \longmapsto \eta_{j}$ & Total : $d-3$ \\ 
		$3 \leq j \leq d-1$ &&&  \\ &&& \\
		
		$\left( (2^2,1^{(n-d-2)}), (2^{\frac{d-3}{2}}, 1) \right)$ & $e_1 +e_2 - e_3 -e_4 $  &  $(1 \; 3)(2 \; 4) \longmapsto -1$  &  Total: 1 \\
		&&& \\
		
		$\left( (2,1^{(n-d-1)}), (2^{\frac{d-3}{2}}, 1^2) \right)$ & $e_{n-1}-e_n $  & $ (n-1 \; n) \longmapsto -1 $ &  Total: 1 \\
		&&& \\
		
		\hline &&& \\ &&& Total: $4d$ \\ \hline		
	\end{tabular}
		} \end{center}
	\end{table} 
	
\noindent  The case of even homological degree $d$ is similar, and we omit the details. In Table \ref{TableX1+Y1} we summarize some stable and unstable values for small $n$ and $d$. 
\end{proof}

\begin{table}[h] 
\begin{footnotesize}	\caption {Some stable and unstable values of $\left\langle X_1 + Y_1, H^d ( {\cM_{BC}}_n(\C) ; \C) \right\rangle_{B_n}$} \label{TableX1+Y1}
	\begin{center}
		\begin{tabular}{| c  |c c c c c c c c c c c | } \hline 
			&& $n$ &&&&&&&&& \\
			$d$ & & 1 & 2 & 3 & 4 & 5 & 6 & 7 & 8 & \ldots & $n >> d$ \\ 
			\cline{2-12}
			&&&&&&&&&&   &\\
			0 &&  1 & 1 & 1 & 1 & 1 & 1 & 1 & 1 & \ldots & 1 \\
			1 &&1 &3 & 4 &  4 & 4 & 4 & 4 & 4  & \ldots & 4 \\ 
			2 && 0  &2 & 6 & 8 & 8 & 8 & 8 & 8 & \ldots & 8 \\ 
			3 && 0 & 0 & 3 & 9 & 12 & 12 & 12 & 12 & \ldots & 12 \\ 
			4 &&  0 & 0 & 0 & 4 & 12 & 16 & 16 & 16 & \ldots & 16 \\ 
			5 &&  0 & 0 & 0 & 0 & 5 & 15 & 20 & 20 & \ldots & 20 \\ 
			6 && 0 & 0 & 0 & 0 & 0 & 6 & 18 & 24 & \ldots & 24 \\ 
			7 && 0 & 0 & 0 & 0 & 0 & 0 & 7 & 21  & \ldots & 28 \\ 
			8 && 0 & 0 & 0 & 0 & 0 & 0 & 0 & 8  & \ldots & 32 \\ 
			&&&&&&&&&&& \\    \hline 
		\end{tabular} \\ \medskip
	\end{center}
\end{footnotesize}
\end{table}

\begin{prop}{\bf (Expected number of linear factors for polynomials in $\mathcal{Y}_n(\mathbb{F}_q)$).} \label{NUMLINEAR} In the limit as $n$ tends to infinity, the expected value of the number of linear factors  in $\mathcal{Y}_n(\mathbb{F}_q)$  converges to 
$$
\lim_{n\to\infty}\frac{\sum_{f\in \mathcal{Y}_n(\mathbb{F}_q)} \big(X_1(f)+Y_1(f)\big)}{|\mathcal{Y}_n(\mathbb{F}_q)|}=\frac{q-1}{q+1}.$$
\end{prop}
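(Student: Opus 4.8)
The plan is to recognize that the statistic "number of linear factors of $f$" is exactly the polynomial $P = X_1 + Y_1$, since by the interpretation in Section \ref{SectionPoint-CountingFormula} we have $X_1(f)+Y_1(f) = \#\{\text{degree--}1\text{ irreducible factors of }f\}$. Thus the quantity to compute is the limit of $\frac{\sum_{f\in\mathcal{Y}_n(\F_q)}(X_1(f)+Y_1(f))}{|\mathcal{Y}_n(\F_q)|}$, and I would obtain it by combining Corollary \ref{CorExpectedValue} (equivalently, Theorem \ref{ASYMCOUNTING} together with Proposition \ref{NUMBER}) with the stable inner products already computed in Lemma \ref{INNER}.

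Concretely, I would first invoke Corollary \ref{CorExpectedValue} with $P = X_1+Y_1$ to write
$$ \lim_{n\to\infty}\frac{\sum_{f\in\mathcal{Y}_n(\F_q)}\bigl(X_1(f)+Y_1(f)\bigr)}{|\mathcal{Y}_n(\F_q)|} = \left(\frac{q+1}{q-1}\right)\sum_{d=0}^{\infty}\frac{\lim_{m\to\infty}\bigl\langle X_1+Y_1,\, H^{d}({\cM_{BC}}_m(\C);\C)\bigr\rangle_{B_m}}{(-q)^{d}}. $$
Then I would substitute the stable values from Lemma \ref{INNER}: the $d=0$ term contributes $1$, and each term with $d\ge 1$ contributes $\frac{4d}{(-q)^{d}}$. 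The remaining work is the evaluation of the resulting series. Using the standard identity $\sum_{d\ge 1} d\,x^{d} = x/(1-x)^2$ with $x = -1/q$ (which is valid since $q\ge 3 > 1$, so $|x|<1$), one gets $\sum_{d\ge 1}\frac{4d}{(-q)^{d}} = \frac{-4q}{(q+1)^2}$, hence the series equals $1 - \frac{4q}{(q+1)^2} = \frac{(q-1)^2}{(q+1)^2}$. Multiplying by the factor $\frac{q+1}{q-1}$ yields $\frac{q-1}{q+1}$, as claimed.

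There is essentially no serious obstacle internal to this proof: all the heavy lifting has been done in Lemma \ref{INNER} (the Douglass-decomposition computation of the stable inner products), in Theorem \ref{ASYMCOUNTING}/Corollary \ref{CorExpectedValue} (the asymptotic transfer between topology and point counts), and in Proposition \ref{NUMBER} (the count $|\mathcal{Y}_n(\F_q)|$). The only point to be careful about is the bookkeeping of the alternating signs $(-q)^{-d}$ versus $q^{-d}$ and the normalizing factor $\frac{q+1}{q-1}$ coming from $\lim_n q^{-n}|\mathcal{Y}_n(\F_q)| = \frac{q-1}{q+1}$; a sign slip there would change $\frac{(q-1)^2}{(q+1)^2}$ into something else. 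If one prefers to avoid citing Corollary \ref{CorExpectedValue}, the same computation can be run by applying Theorem \ref{ASYMCOUNTING} to get $\lim_n q^{-n}\sum_f (X_1+Y_1)(f) = \frac{(q-1)^2}{(q+1)^2}$ and then dividing by $\lim_n q^{-n}|\mathcal{Y}_n(\F_q)| = \frac{q-1}{q+1}$ from Proposition \ref{NUMBER}.
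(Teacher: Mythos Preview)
Your proof is correct and follows essentially the same approach as the paper: both apply the asymptotic formula to $P=X_1+Y_1$, insert the stable inner products from Lemma \ref{INNER}, sum the resulting series to $\frac{(q-1)^2}{(q+1)^2}$, and then divide by $\lim_n q^{-n}|\mathcal{Y}_n(\F_q)|=\frac{q-1}{q+1}$ from Proposition \ref{NUMBER}. The only cosmetic difference is that you package the last step via Corollary \ref{CorExpectedValue} while the paper invokes Theorem \ref{ASYMCOUNTING} and Proposition \ref{NUMBER} separately---a route you also mention as an alternative.
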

\begin{proof}
In order to count the number of linear factors of a given polynomial $f\in\mathcal{Y}_n(\F_q)$ we can evaluate Formula  (\ref{ASYMEQ}) at the character polynomial $X_1+Y_1$.  Lemma \ref{INNER} states the stable values of the inner products on the right-hand side of the equation, and implies
\begin{align*}
\lim_{n\to \infty} q^{-n}\sum_{f\in \mathcal{Y}_n(\mathbb{F}_q)} \big(X_1(f)+Y_1(f)\big)
&=1-\frac{4}{q}+\frac{8}{q^2}-\frac{12}{q^3}+\ldots +\frac{(-1)^k(4k)}{q^k}+\ldots \\ 
& = \frac{(q-1)^2}{(q+1)^2}.
\end{align*}
From Proposition \ref{NUMBER}, we see 
\begin{align*}
\lim_{n \to \infty} \frac{|\mathcal{Y}_n(\mathbb{F}_q)|}{q^n} 
&= 1 -\frac{2}{q}+\frac{2}{q^2}-\frac{2}{q^3}+\ldots +\frac{(-1)^k(2)}{q^k}+\ldots \\ 
&= \frac{(q-1)}{(q+1)}
\end{align*}
and so by taking the ratio of these limits, we complete the proof.
\end{proof}

	
	



Since half the nonzero elements of $\F_q$ are quadratic residues, we might expect that half of these linear factors to be QR in the sense of Definition \ref{DefnQR}  -- and indeed we  can verify this by evaluating Formula (\ref{ASYMEQ}) at the character polynomials $X_1$ and  $Y_1$. 
 Since $$X_1=\frac{1}{2} \bigg[ (X_1+Y_1)+(X_1-Y_1) \bigg] \qquad \text{and} \qquad  Y_1=\frac{1}{2} \bigg[ (X_1+Y_1)-(X_1-Y_1) \bigg] $$ the we can deduce the following result from Lemma \ref{X-Y} and  Theorem \ref{NUMLINEAR}.

\begin{prop}{\bf (Expected number of  QR and NQR linear factors).} \label{NUMQR} The expected number of QR linear factors  in $\mathcal{Y}_n(\mathbb{F}_q)$  tends to
$$
\lim_{n\to\infty}\frac{\sum_{f\in \mathcal{Y}_n(\mathbb{F}_q)} X_1(f)}{|\mathcal{Y}_n(\mathbb{F}_q)|}= \frac{q-1}{2(q+1)}.
$$
The expected number  of NQR linear factors in $\mathcal{Y}_n(\mathbb{F}_q)$  tends to
$$
\lim_{n\to\infty}\frac{\sum_{f\in \mathcal{Y}_n(\mathbb{F}_q)} Y_1(f)}{|\mathcal{Y}_n(\mathbb{F}_q)|}= \frac{q-1}{2(q+1)}.
$$
\end{prop}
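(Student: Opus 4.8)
The plan is to deduce Proposition \ref{NUMQR} directly from the linear-algebra identities relating $X_1$, $Y_1$, $X_1+Y_1$, and $X_1-Y_1$, combined with the two results we have already established: Lemma \ref{X-Y}, which says $\langle X_1 - Y_1, H^d({\cM_{BC}}_n(\C);\C)\rangle_{B_n} = 0$ for all $n$ and $d$, and Proposition \ref{NUMLINEAR}, which computes $\lim_{n\to\infty} q^{-n}\sum_{f\in\mathcal{Y}_n(\F_q)}(X_1(f)+Y_1(f))$. No serious new work is needed; this is a short formal argument.

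First I would apply Theorem \ref{ASYMCOUNTING} (Formula (\ref{ASYMEQ})) to the character polynomial $X_1 - Y_1$. By Lemma \ref{X-Y}, every stable inner product $\lim_{m\to\infty}\langle X_1 - Y_1, H^d({\cM_{{BC}_m}}(\C),\C)\rangle_{B_m}$ vanishes, so the right-hand side of (\ref{ASYMEQ}) is identically zero, and hence
\[
\lim_{n\to\infty} q^{-n}\sum_{f\in\mathcal{Y}_n(\F_q)}\bigl(X_1(f) - Y_1(f)\bigr) = 0.
\]
Next, using the identities $X_1 = \tfrac12\bigl[(X_1+Y_1) + (X_1-Y_1)\bigr]$ and $Y_1 = \tfrac12\bigl[(X_1+Y_1) - (X_1-Y_1)\bigr]$ together with linearity of the sum $\sum_{f\in\mathcal{Y}_n(\F_q)}$ and of the limit, I would combine this with Proposition \ref{NUMLINEAR}, which gives $\lim_{n\to\infty} q^{-n}\sum_{f}(X_1(f)+Y_1(f)) = \tfrac{(q-1)^2}{(q+1)^2}$. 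Therefore both $\lim_{n\to\infty} q^{-n}\sum_f X_1(f)$ and $\lim_{n\to\infty} q^{-n}\sum_f Y_1(f)$ equal $\tfrac12 \cdot \tfrac{(q-1)^2}{(q+1)^2}$.

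Finally I would divide by $|\mathcal{Y}_n(\F_q)|$ and pass to the limit, using Proposition \ref{NUMBER}, which gives $\lim_{n\to\infty} q^{-n}|\mathcal{Y}_n(\F_q)| = \tfrac{q-1}{q+1}$ (exactly as in the proofs of Corollary \ref{CorExpectedValue} and Proposition \ref{NUMLINEAR}). Taking the ratio of the two limits,
\[
\lim_{n\to\infty}\frac{\sum_{f\in\mathcal{Y}_n(\F_q)} X_1(f)}{|\mathcal{Y}_n(\F_q)|} = \frac{\tfrac12\,\tfrac{(q-1)^2}{(q+1)^2}}{\tfrac{q-1}{q+1}} = \frac{q-1}{2(q+1)},
\]
and identically for $Y_1$. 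There is essentially no obstacle here: the only mild point of care is to make sure the normalized sums and the normalization $q^{-n}|\mathcal{Y}_n(\F_q)|$ all converge (which they do by Theorem \ref{ASYMCOUNTING} and Proposition \ref{NUMBER}) before taking ratios, so that the limit of the ratio is the ratio of the limits. This is exactly the bookkeeping already carried out in the proof of Proposition \ref{NUMLINEAR}, so it can be stated briefly.
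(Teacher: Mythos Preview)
Your proposal is correct and follows essentially the same approach as the paper: the paper simply notes the identities $X_1 = \tfrac12[(X_1+Y_1)+(X_1-Y_1)]$ and $Y_1 = \tfrac12[(X_1+Y_1)-(X_1-Y_1)]$ and states that the result follows from Lemma \ref{X-Y} and Proposition \ref{NUMLINEAR}, which is precisely the argument you have spelled out in detail.
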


\begin{small}

\bibliographystyle{amsalpha}
\bibliography{referFI}\bigskip\bigskip

\noindent Instituto de Matem\'aticas, Universidad Nacional Aut\'onoma de M\'exico \\
Oaxaca de Ju\'arez, Oaxaca, M\'exico 68000 \\ 
rita@im.unam.mx\\

\noindent Stanford University,  Department of Mathematics\\
Sloan Hall.  450 Serra Mall, Building 380,  Stanford, CA 94305\\
jchw@stanford.edu
\end{small}

\end{document}